\theoremstyle{plain}
\theoremstyle{definition}
\newtheorem{theorem}{Theorem}[section]
\newtheorem{lemma}[theorem]{Lemma}
\newtheorem{proposition}[theorem]{Proposition}
\newtheorem{corollary}[theorem]{Corollary}
\newtheorem{definition}[theorem]{Definition}
\newtheorem{remark}[theorem]{Remark}
\newtheorem{notation}[theorem]{Notation}
\newcommand{\e}{\varepsilon}
\DeclareMathOperator{\N}{\mathbb{N}}
\DeclareMathOperator{\Q}{\mathbb{Q}}
\DeclareMathOperator{\R}{\mathbb{R}}
\DeclareMathOperator{\supp}{supp}
\DeclareMathOperator{\ran}{ran} \DeclareMathOperator{\rank}{rank}
\DeclareMathOperator{\dist}{dist}
\newcommand{\bp}{\begin{proof}}
\newcommand{\ep}{\end{proof}}
\newcommand{\bd}{\begin{definition}}
\newcommand{\ed}{\end{definition}}
\newcommand{\bt}{\begin{theorem}}
\newcommand{\et}{\end{theorem}}
\newcommand{\bpr}{\begin{proposition}}
\newcommand{\epr}{\end{proposition}}
\newcommand{\bc}{\begin{corollary}}
\newcommand{\ec}{\end{corollary}}
\newcommand{\bl}{\begin{lemma}}
\newcommand{\el}{\end{lemma}}
\newcommand{\br}{\begin{remark}}
\newcommand{\er}{\end{remark}}
\newcommand{\be}{\begin{enumerate}}
\newcommand{\ee}{\end{enumerate}}
\long\def\symbolfootnote[#1]#2{\begingroup%
\def\thefootnote{\fnsymbol{footnote}}\footnote[#1]{#2}\endgroup}
\begin{document}
\title{Bourgain-Delbaen $\mathcal{L}^{\infty}$-sums of Banach spaces}
\author{Despoina Zisimopoulou}
\footnote{This research has been co-financed by the European Union (European Social Fund – ESF) and Greek national funds through the Operational Program "Education and Lifelong Learning" of the National Strategic Reference Framework (NSRF) - Research Funding Program: Heracleitus II. Investing in knowledge society through the European Social Fund.}

\address{National Technical University of Athens, Faculty of Applied Sciences,
Department of Mathematics, Zografou Campus, 157 80, Athens,
Greece} \email{dzisimopoulou@hotmail.com}
\maketitle

\symbolfootnote[0]{\textit{2010 Mathematics Subject
Classification:} Primary 46B03, 46B25, 46B28}
\symbolfootnote[0]{\textit{Key words:} Bourgain Delbaen method, $\mathcal{L}_{\infty}$ spaces, strictly quasi prime spaces, horizontally compact operators}

\begin{abstract} Motivated by a problem stated by S.A.Argyros and Th. Raikoftsalis, we introduce a new class of Banach spaces.
Namely, for a sequence of separable Banach spaces
$(X_n,\|\cdot\|_n)_{n\in\N}$, we define the Bourgain Delbaen
$\mathcal{L}^{\infty}$-sum of the sequence
$(X_n,\|\cdot\|_n)_{n\in\N}$ which is a Banach space $\mathcal{Z}$
constructed with the Bourgain-Delbaen method. In
particular,
 for every $1\leq p<\infty$, taking $X_n=\ell_p$ for every $n\in\N$ the aforementioned space $\mathcal{Z}_p$
  is strictly quasi prime and admits $\ell_p$ as a complemented subspace. We study the operators acting on $\mathcal{Z}_p$ and we prove
  that for every $n\in\N$, the space $\mathcal{Z}^n_p=\sum_{i=1}^n\oplus \mathcal{Z}_p$ admits exactly $n+1$, pairwise not isomorphic, complemented subspaces.
\end{abstract}
\section{Introduction}

There has been an extensive study of Schauder sums of sequences of
Banach spaces $(X_n,\|\cdot\|_n)$ (\cite{AF},\cite{AR}) with many interesting applications, depending on the choices of the spaces $X_n$ and the external norm. In particular, in \cite{AF} the authors defined Schauder sums of arbitrary sequence of separable Banach spaces $(\sum_{n\in\N}\oplus X_n)_{GM}$ where the external norm is based on the Gowers Maurey norm \cite{GM1}.

In \cite{AR} the authors studied further the spaces $\mathfrak{X}_p=(\sum_{n\in\N}\oplus \ell_p)_{GM}$ for $1\leq p<\infty$, $\mathfrak{X}_0=(\sum_{n\in\N}\oplus c_0)_{GM}$ as well as the space of their bounded, linear operators. Moreover, in the same paper it was proved that for $\mathfrak{X}=\mathfrak{X}_p$ or $\mathfrak{X}_0$ the space $\mathfrak{X}^n=(\sum_{i=1}^n\oplus\mathfrak{X})_{\infty}$ admits at least $n+1$, up to isomorphism, complemented subspaces and it was stated as an open problem whether they are exactly $n+1$. We do not give an affirmative answer to this problem but instead, following the basic scheme of the authors in \cite{AH} we present a method of constructing for every $n\in\N$ Banach spaces with exactly $n+1$, up to isomorphism, complemented subspaces. In particular, we shall define and construct Schauder sums of sequences of Banach spaces with an external norm that is based on the original Bourgain-Delbaen norm (\cite{BD}).

We now give a description of how this paper is organized. In section 2, given a sequence of separable Banach spaces $(X_n,\|\cdot\|_n)_{n\in\N}$, we define the
Bourgain Delbaen (BD) -$\mathcal{L}^{\infty}$-sum of $(X_n,\|\cdot\|_n)_{n\in\N}$, denoted as $\mathcal{Z}=(\sum_{n\in\N}\oplus X_n)_{BD}$. This space is defined along with a sequence of pairwise disjoint and finite subsets of $\N$, the so called "Bourgain-Delbaen" sets $(\Delta_n)_{n\in\N}$. $\mathcal{Z}$ has a Schauder Decomposition $(Z_n)_{n\in\N}$ and there exists a constant $C>0$ such that $Z_n\simeq^C (X_n\oplus\ell_{\infty}(\Delta_n))_{\infty}$ for every $n\in\N$.

In section 3 we study $\mathcal{Z}^*$ and we show that if the Schauder Decomposition $(Z_n)_{n\in\N}$ of $\mathcal{Z}$ is shrinking then $\mathcal{Z}^*$ can be identified with $(\sum_{n\in\N}\oplus(X_n^*\oplus\ell_1(\Delta_n))_1)_1$. In Section 4 we describe in detail the construction of $\mathcal{Z}$. We also study the special case where the Bourgain Delbaen external norm is the Argyros-Haydon norm in
\cite{AH}, yielding spaces $\mathcal{Z}$ which we denote by $(\sum_{n\in\N}\oplus X_n)_{AH}$. We devote Sections 5,6 and 7 into proving the following.
 \begin{theorem}\label{1}
Let $(X_n,\|\cdot\|_n)_n$ be a sequence of separable Banach spaces and $\mathcal{Z}=(\sum_{n=1}^{\infty}\oplus X_n)_{AH}$. Then the following hold:
\begin{enumerate}
\item The space $\mathcal{Z}$ admits a shrinking Schauder Decomposition.
\item Every block (with respect to $(Z_k)_{k\in\N}$ ) sequence $(x_n)_{n\in\N}$ generates an HI subspace,
i.e. the subspace $\overline{<x_n:\ n\in\N>}$ of $Z$ is HI.
\item Assume that for every $n\in\N$, either $\ell_1$ does not embed in $X_n^*$ or $X_n$ has the Schur property.  Then, for every bounded, linear operator $T$ on $\mathcal{Z}$ there exists
a scalar $\lambda\in\R$ such that the operator $T-\lambda I$ on $\mathcal{Z}$ is
horizontally compact, i.e. for every bounded, block (with respect to $(Z_n)_{n\in\N}$ )
sequence $(z_k)_{k\in\N}$ in $Z$, $\|(T-\lambda I)z_k\|\to 0$.
\end{enumerate}
\end{theorem}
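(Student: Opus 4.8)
The three assertions should follow by transplanting to $\mathcal{Z}$ the Argyros--Haydon machinery of \cite{AH} --- rapidly increasing sequences, the basic inequality, exact pairs, special functionals, dependent sequences --- while carrying along the inner components. Throughout, write a block vector $x\in\bigoplus_{n\in I}Z_n$ as $x=\xi+b$, where $\xi=\sum_{n\in I}\xi_n\in\bigoplus_n X_n$ is its \emph{inner part} and $b=\sum_{n\in I}b_n\in\bigoplus_n\ell_\infty(\Delta_n)$ its \emph{BD-part}; by $Z_n\simeq^C(X_n\oplus\ell_\infty(\Delta_n))_\infty$ both have norm $O(\|x\|)$. The key structural point is that the special (and $\ell_1$-averaging) functionals of the AH construction act only on BD-parts and never ``conditionally sum'' inner parts across distinct blocks; only the blockwise functionals in $X_n^*$ see the inner parts, one block at a time --- this is why the inner parts will turn out to be harmless.

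\textbf{Part (1).} Transfer to $\mathcal{Z}$ the notion of a $C$-RIS with respect to $(Z_n)$ and the \emph{basic inequality}: for such a sequence $(x_k)$ and scalars $(a_k)$, $\|\sum_k a_k x_k\|_{\mathcal{Z}}\le C'\,\|\sum_k a_k e_k\|_{\mathcal{T}}$, where $\mathcal{T}$ is the auxiliary mixed Tsirelson space of the AH norm. Writing $x_k=\xi_k+b_k$, the BD-parts satisfy this bound by the AH estimate, and the inner parts --- occupying pairwise different summands $X_n$ --- add only a term of the same shape, again dominated by $\|\cdot\|_{\mathcal{T}}$. In particular an $\ell_1^{n_j}$-average of a $C$-RIS has norm $\le C'/m_j$, and Cesàro averages of RIS are norm-null. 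Now suppose a normalized block sequence $(x_k)$ is not weakly null and pick $f\in\mathcal{Z}^*$, $\delta>0$ with $f(x_k)\ge\delta$ along a subsequence; the averages $u_N=n_{j_N}^{-1}\sum_{k\in F_N}x_k$, $|F_N|=n_{j_N}$, $j_N\uparrow\infty$, satisfy $\|u_N\|\ge\delta/\|f\|$, while, after passing to a subsequence along which the $u_N$ form a RIS (a standard fact about $\ell_1$-averages of block sequences), the basic inequality gives $\|u_N\|\le C'/m_{j_N}\to0$ --- a contradiction. Hence every bounded block sequence is weakly null, i.e.\ $(Z_n)$ is shrinking; the identification $\mathcal{Z}^*=(\sum_n\oplus(X_n^*\oplus\ell_1(\Delta_n))_1)_1$ is then the one established in Section~3.

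\textbf{Part (2).} Let $Y=\overline{\langle x_n:n\in\N\rangle}$ and let $Y_1,Y_2$ be block subspaces of $Y$. For each large even index $n_{2j}$ form an $\ell_1^{n_{2j}}$-average of a long RIS drawn from $Y_1$ to obtain a $(C,n_{2j})$-exact pair $u_j\in Y_1$, and similarly $v_j\in Y_2$; by the estimate above the inner part of each such average has $\mathcal{Z}$-norm $O(1/m_{2j})$. Choosing the weights $m_{2j}$ according to the coding function of \cite{AH}, interleave the $u$'s and $v$'s into a dependent sequence $(z_i)_{i\le n_{j_0}}$ with $z_{2i-1}=u_i$, $z_{2i}=v_i$. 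The associated $n_{j_0}$-special functional $\phi$ yields $\|\sum_i(u_i+v_i)\|\ge\phi(\sum_i z_i)\gtrsim n_{j_0}/m_{j_0}$, whereas the basic inequality in the unconditional space $\mathcal{T}$ bounds the alternating sum of BD-parts by $O(n_{j_0}/m_{j_0}^2)$, and the inner parts sum to a vector of norm $O(\sum_i 1/m_{2j_i})=O(1/m_{2j_1})$; altogether $\|\sum_i(u_i-v_i)\|\le\delta_{j_0}\|\sum_i(u_i+v_i)\|$ with $\delta_{j_0}$ as small as we wish. With $w_1=\sum_i u_i\in Y_1$ and $w_2=\sum_i v_i\in Y_2$ this is the Gowers--Maurey criterion, so $Y$ is HI.

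\textbf{Part (3).} Let $T\in\mathcal{L}(\mathcal{Z})$. By Part~(2) every block subspace $Y$ generated by a RIS is HI, hence $T|_Y=\lambda_Y I+S_Y$ with $S_Y$ strictly singular; interleaving two such subspaces into a common HI block subspace shows $\lambda_Y$ is independent of $Y$, and an exact-pair/dependent-sequence argument comparing a RIS $(x_k)$ with $(Tx_k)$ --- with HI forcing $\dist(Tx_k,\R x_k)\to0$ --- shows the common value $\lambda$ satisfies $(T-\lambda I)x_k\to0$ for every RIS. Put $S=T-\lambda I$; it remains to show $\|Sz_k\|\to0$ for every normalized block sequence $(z_k)$. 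By Part~(1), $(z_k)$, hence $(Sz_k)$, is weakly null; write $Sz_k=\eta_k+c_k$. The BD-part $c_k$ lies in $\overline{\bigoplus_n\ell_\infty(\Delta_n)}$, a BD-$\mathcal{L}^\infty$-space of AH type, and a dependent-sequence argument as in Part~(2), using that $S|_{\overline{\langle z_k\rangle}}$ is strictly singular, gives $\|c_k\|\to0$. For the inner part $\eta_k$ the hypothesis on the $X_n$ enters: if each $X_n$ has the Schur property, a weakly null sequence cannot concentrate on any single $X_n$ and $\|\eta_k\|\to0$ immediately; if instead $\ell_1\not\hookrightarrow X_n^*$, one shows that the map $z\mapsto$ (the inner part of $Sz$), restricted to the HI subspace $\overline{\langle z_k\rangle}$, is strictly singular into $\bigoplus_n X_n$, again forcing $\|\eta_k\|\to0$. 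Thus $\|Sz_k\|\to0$, i.e.\ $S=T-\lambda I$ is horizontally compact. I expect this last step to be the main obstacle: unlike in \cite{AH} one cannot hope for $S$ to be compact --- since each $X_n\simeq\ell_p$ is complemented, $I+P$ (with $P$ the bounded projection onto that copy) is scalar-plus-horizontally-compact but not scalar-plus-compact --- so one must combine the AH analysis on the BD-skeleton with precisely the hypothesis that each $X_n$ is Schur or satisfies $\ell_1\not\hookrightarrow X_n^*$ (true for every $\ell_p$, $1\le p<\infty$) to rule out the inner part of $S$ sustaining a non-horizontally-compact perturbation.
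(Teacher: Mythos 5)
Your overall skeleton for parts (2) and (3) matches the paper's (exact pairs, dependent sequences, the basic inequality), but there is a genuine gap in part (3): you have misplaced the point where the hypothesis on the $X_n$ enters, and without it the earlier steps of your own argument do not go through. In the Argyros--Haydon space the projections $P_{[1,q]}$ have finite rank, so $P_{[1,q]}Tw_k\to0$ for any weakly null $(w_k)$ and one may localize $Tw_k$ to the range of $w_k$ before running the dependent-sequence argument. Here $P_{[1,q]}\mathcal{Z}\supset\sum_{n\le q}X_n$ is infinite dimensional, so weak nullity of $(Tw_k)$ gives nothing, and the assertion you rely on --- that $\dist(Tw_k,\R w_k)\to0$ for a RIS follows from the HI property of block subspaces via ``$T|_Y=\lambda_Y I+S_Y$ with $S_Y$ strictly singular'' --- is unjustified: $T$ maps $Y$ into $\mathcal{Z}$, not into $Y$, and nothing yet prevents $Tw_k$ from retaining a component of size $\delta$ inside the fixed infinite-dimensional space $P_{[1,q]}\mathcal{Z}$, which the special functionals (supported on $\ell_1(\Delta_k)$-coordinates far to the right) cannot detect. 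The paper's Proposition \ref{blocking} is exactly the missing step: using either the Schur property of each $X_n$ or $\ell_1\not\hookrightarrow X_n^*$ (the latter via Rosenthal's theorem together with Corollary \ref{blockdual} and Proposition \ref{duality} in the dual $(\sum_n\oplus(X_n^*\oplus\ell_1(\Delta_n))_1)_1$), one shows $\|P_{[1,q]}Tz_k\|\to0$ along a subsequence of every bounded block sequence, and only then can Lemma \ref{exact} and Lemma \ref{RIS} be run. Your closing claim that when each $X_n$ is Schur ``a weakly null sequence cannot concentrate on any single $X_n$ and $\|\eta_k\|\to0$ immediately'' is also false as stated: a weakly null sequence of inner parts can escape to infinity through the $X_n$'s (unit vectors $\eta_k\in X_{n_k}$ with $n_k\to\infty$) without being norm null; the Schur property is only of use on the fixed finite initial block $P_{[1,q]}\mathcal{Z}$.

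There is a second, smaller gap in part (1). The basic inequality bounds averages of a RIS, not of an arbitrary normalized block sequence, so the estimate $\|u_N\|\le C'/m_{j_N}$ for $u_N=n_{j_N}^{-1}\sum_{k\in F_N}x_k$ is not available: the family $(x_k)_{k\in F_N}$ need not be a RIS. What you can prove is that $(u_N)_N$ is itself a RIS, whence its Cesàro means are norm null by Corollary \ref{basiceqcor}, which still contradicts $f(u_N)\ge\delta$; alternatively the paper avoids averaging altogether by splitting each block into its inner part (automatically a RIS by Lemma \ref{RISX} and Corollary \ref{RISXS}) and its BD-part, the latter further split by weight, and upgrading from RIS to arbitrary bounded block sequences via Proposition \ref{RISblocking}. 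Either repair works, but as written the inequality you invoke is not what the basic inequality gives.
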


In Section 8 we prove that $\mathcal{Z}_p=(\sum_{n\in\N}\oplus \ell_p)_{AH}$ for $1\leq p<\infty$ is strictly quasi prime and contains isomorphically
$\ell_p$ as a complemented subspace. We recall (see \cite{AH}) that a Banach space $X$ is strictly quasi prime if there exists a subspace $Y$ of $X$ not isomorphic to $X$ such that $X$ admits a unique non trivial decomposition as $Y\oplus X$.

Our main result is proved in Section 9:

\begin{theorem}\label{2}
For every $1\leq p<\infty$ and $n\in\N$ the space $\mathcal{Z}_p^n=(\sum_{i=1}^n\oplus
\mathcal{Z}_p)_{\infty}$ has exactly $n+1$ up to isomorphism complemented subspaces.
\end{theorem}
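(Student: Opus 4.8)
The plan is to show that, up to isomorphism, the infinite-dimensional complemented subspaces of $\mathcal{Z}_p^n$ are precisely $\ell_p,\mathcal{Z}_p,\mathcal{Z}_p^2,\dots,\mathcal{Z}_p^n$. The backbone of the argument is a \emph{symbol calculus}. For $X_k=\ell_p$ the hypothesis of Theorem~\ref{1}(3) holds for every $1\le p<\infty$: if $p>1$ then $\ell_1$ does not embed in $X_k^{*}=\ell_q$ with $1/p+1/q=1$, and if $p=1$ then $X_k=\ell_1$ has the Schur property. Hence every $T\in\mathcal{L}(\mathcal{Z}_p)$ equals $\lambda I+S$ with $S$ horizontally compact; the horizontally compact operators form a closed two-sided ideal $\mathcal{HC}(\mathcal{Z}_p)$ with $\mathcal{L}(\mathcal{Z}_p)/\mathcal{HC}(\mathcal{Z}_p)\cong\mathbb{R}$, so representing an operator on $\mathcal{Z}_p^n$ by its $n\times n$ matrix over $\mathcal{L}(\mathcal{Z}_p)$ yields a unital algebra homomorphism $\Phi\colon\mathcal{L}(\mathcal{Z}_p^n)\to M_n(\mathbb{R})$ whose kernel is precisely the ideal of horizontally compact operators on $\mathcal{Z}_p^n$. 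If $P$ and $P'$ are projections with the same range $Y$, then $PP'=P'$ and $P'P=P$, so $\operatorname{rank}\Phi(P)=\operatorname{rank}\Phi(P')$; write $r(Y)\in\{0,1,\dots,n\}$ for this common value, the \emph{symbol rank} of $Y$. Each of the $n+1$ listed spaces is complemented in $\mathcal{Z}_p^n$ — the powers via coordinate projections, and $\ell_p$ via its complementation in the first copy of $\mathcal{Z}_p$ established in Section~8 — and $r(\mathcal{Z}_p^k)=k$.

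I would next check that these $n+1$ spaces are pairwise non-isomorphic. No $\mathcal{Z}_p^k$ with $k\ge1$ is isomorphic to $\ell_p$, since a finite $\ell_\infty$-sum of $\mathcal{L}_\infty$-spaces is an $\mathcal{L}_\infty$-space whereas $\ell_p$ ($p<\infty$) is not. Suppose $1\le j<k\le n$ and $U\colon\mathcal{Z}_p^k\to\mathcal{Z}_p^j$ is an isomorphism; let $\iota$ and $\pi$ be the inclusion of, and the projection onto, the first $j$ coordinates of $\mathcal{Z}_p^k$, so that $\pi\iota=I_{\mathcal{Z}_p^j}$ and $\iota\pi=Q_j$, the coordinate projection of $\mathcal{Z}_p^k$ onto its first $j$ coordinates. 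Then $W:=\iota U$ and $W':=U^{-1}\pi$ in $\mathcal{L}(\mathcal{Z}_p^k)$ satisfy $W'W=I$ and $WW'=Q_j$; applying $\Phi$, the matrix $\Phi(W)\in M_k(\mathbb{R})$ is invertible, hence $\Phi(W)\Phi(W')=I_k$, while $\Phi(WW')=\Phi(Q_j)$ has rank $j<k$ — a contradiction.

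Two auxiliary facts drive the classification. First, a horizontally compact projection $P$ on $\mathcal{Z}_p^n$ with infinite-dimensional range $Y$ has $Y\cong\ell_p$. A gliding-hump argument shows that a horizontally compact operator $K$ satisfies $\|K|_{\mathcal{Z}_p^{\ge N,n}}\|\to0$, where $\mathcal{Z}_p^{\ge N,n}:=\overline{\langle Z_m^{(i)}:m\ge N,\ 1\le i\le n\rangle}$ (a bounded sequence with terms in ever deeper tails has a subsequence $(z_{k_l})$ with $\|z_{k_l}-w_l\|\to0$ for some block sequence $(w_l)$). Hence for $N$ large $PQ_{[1,N-1]}|_Y$ is within norm distance $<1$ of $I_Y$, so it is invertible on $Y$; since it factors through $E_N:=\overline{\langle Z_m^{(i)}:m<N,\ i\le n\rangle}$, which is isomorphic to $\ell_p$ (each $Z_m$ is isomorphic to $(\ell_p\oplus\ell_\infty(\Delta_m))_\infty$ with $\Delta_m$ finite, and a finite $\ell_\infty$-sum of copies of $\ell_p$ is $\ell_p$), it exhibits $Y$ as isomorphic to a complemented subspace of $\ell_p$, whence $Y\cong\ell_p$ by Pe\l czy\'nski's theorem. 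Secondly, $GL_n(\mathbb{R})$ lifts to a group of invertible operators on $\mathcal{Z}_p^n$: coordinate permutations are isometries, invertible diagonal matrices act coordinatewise, a transvection $I_n+\lambda E_{ij}$ lifts to the invertible shear $(z_1,\dots,z_n)\mapsto(\dots,z_i+\lambda z_j,\dots)$, and these generate $GL_n(\mathbb{R})$. Finally, $Q_{\ge N}:=I-Q_{[1,N-1]}$ is a projection onto a subspace of symbol rank $n$ (since $Q_{[1,N-1]}$ is horizontally compact), whereas inside $\mathcal{Z}_p$ any projection onto a copy of $\ell_p$ is horizontally compact (symbol $1$ would force $I-P$ horizontally compact and hence a block subspace isomorphic to $\ell_p$, contradicting Theorem~\ref{1}(2)); thus $\mathcal{Z}_p^{\ge N}\not\cong\ell_p$, and strict quasi-primeness of $\mathcal{Z}_p$ (Section~8) forces $\mathcal{Z}_p^{\ge N}\cong\mathcal{Z}_p$, and therefore $\mathcal{Z}_p^{\ge N,n}\cong\mathcal{Z}_p^n$.

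Now let $Y\subseteq\mathcal{Z}_p^n$ be complemented and infinite-dimensional, with projection $P$ and $r:=r(Y)$. If $r=0$ the first fact gives $Y\cong\ell_p$. If $r\ge1$, choose $A\in GL_n(\mathbb{R})$ with $A^{-1}\Phi(P)A=e_r:=\operatorname{diag}(1,\dots,1,0,\dots,0)$ ($r$ ones), lift $A$ to an invertible operator $S$, and replace $P$ and $Y$ by $S^{-1}PS$ and $S^{-1}Y\cong Y$; thus we may assume $\Phi(P)=e_r$, i.e.\ $P=Q_r+K$ with $Q_r$ the projection onto the first $r$ coordinates and $K$ horizontally compact. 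Since $\|K|_{\mathcal{Z}_p^{\ge N,n}}\|\to0$, the restriction of $P$ to $\mathcal{Z}_p^{\ge N,r}$ is bounded below for $N$ large, and a standard perturbation of the almost-idempotent operator $PQ_rQ_{\ge N}P|_Y$ produces a genuine projection $\widetilde\rho$ of $Y$ onto a subspace isomorphic to $\mathcal{Z}_p^{\ge N,r}\cong\mathcal{Z}_p^r$, with $\Phi(\widetilde\rho)=e_r$. The complementary projection $P-\widetilde\rho$ of $Y$ then has symbol $\Phi(P)-\Phi(\widetilde\rho)=0$, so its range is isomorphic to $\ell_p$ or is finite-dimensional; hence $Y\cong\mathcal{Z}_p^r\oplus\ell_p\cong\mathcal{Z}_p^r$, using $\mathcal{Z}_p\cong\ell_p\oplus\mathcal{Z}_p$. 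Together with the preceding paragraph, this gives the theorem. The main obstacle is the case $r\ge1$: one must pass from $P=Q_r+K$ to a genuine complemented splitting $Y\cong\mathcal{Z}_p^r\oplus(\text{range of symbol }0)$ with enough control on the perturbation and on the symbols of the resulting summands; and, underlying all of this, the passage from the operator description in Theorem~\ref{1}(3) on $\mathcal{Z}_p$ to a genuinely multiplicative symbol calculus on $\mathcal{Z}_p^n$ — in particular the fact that $\mathcal{HC}$ is a two-sided ideal compatible with the matrix representation — must be checked in detail.
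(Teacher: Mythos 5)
Your proposal is correct and follows essentially the same route as the paper's Section 9: you write each operator on $\mathcal{Z}_p^n$ as a scalar $n\times n$ matrix plus a horizontally compact part (Notation \ref{projection}), show the scalar matrix of a projection is idempotent and conjugate it to a diagonal idempotent by a lifted invertible matrix (Lemma \ref{projectiononR} and Proposition \ref{complement_Z^n}), identify symbol-rank-$0$ ranges with $\ell_p$ and symbol-rank-$r$ ranges with $\mathcal{Z}_p^r$ via the quasi-primeness $\mathcal{Z}_p\simeq\mathcal{Z}_p\oplus\ell_p$, and separate the powers by the rank of the symbol. The differences are cosmetic: your explicit quotient-algebra packaging, your self-contained rank argument for $\mathcal{Z}_p^j\not\simeq\mathcal{Z}_p^k$ (the paper delegates this to \cite{AR}), and your factorization through $E_N\simeq\ell_p$ in place of the two-case Lemma \ref{isomorphic}.
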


We must mention that similar results were obtained by many authors using different techniques. Namely, P. Wojtasczszyk in \cite{W} and P. Wojtasczszyk, I.S. Edelstein in \cite{EW} proved that for every $n\in\N$ there exists a separable Banach space with exactly $2^n-1$ complemented subspaces. Moreover, as it is pointed out in \cite{FG}, W.T. Gowers and B. Maurey in \cite{GM} constructed for every $n\in\N$ a Banach space $X_n$ that has exactly $n$ up to isomorphism complemented subspaces. Our approach is more direct using a combination of techniques which are presented in \cite{AH} and \cite{AR}.

This introduction cannot end without giving my special thanks to my advisor and Professor S. A. Argyros for his helpful advice as well as to I. Gasparis for useful conversations regarding the results of this work.

\section{The definition of a BD-$\mathcal{L}^{\infty}$-sum of Banach spaces}
We start by giving the needed terminology.
\begin{notation}\label{terminology}
Let $(E_n,\|\cdot\|_{E_n})_{n=1}^{\infty}$ be sequences of separable
Banach spaces. For $I$ interval of $\N$ or $I=\N$ we consider direct sums $(\sum_{n\in I}\oplus E_n)_{\infty}$, we use vectors as $\overrightarrow{x},\overrightarrow{y},\overrightarrow{z}$ to represent their elements. For $\overrightarrow{x}\in(\sum_{n\in I}\oplus E_n)_{\infty}$ we denote by $\overrightarrow{x}(n)$ the $n$-th coordinate of $\overrightarrow{x}$ in $E_n$ and the norm is defined as $\|\overrightarrow{x}\|_{\infty}=\sup_{n\in I}\|\overrightarrow{x}(n)\|_{E_n}$. In a similar manner we consider dual direct sums $(\sum_{n\in I}E_n^*)_1$ consisting of elements functions vectors which we denote by $\overrightarrow{f},\overrightarrow{g},\overrightarrow{h}$, etc. For an element $\overrightarrow{f}$ we define $\|\overrightarrow{f}\|_1=\sum_{n\in I}\|\overrightarrow{f}(n)\|_{E_n^*}$ and for $\overrightarrow{x}\in(\sum_{k\in I}\oplus E_k)_{\infty}$ we denote by $\overrightarrow{f}(\overrightarrow{x})$ the inner product $\sum_{n\in I}\overrightarrow{f}_n(\overrightarrow{x}_n)$, where $\overrightarrow{f}_n=\overrightarrow{f}(n)$ and $\overrightarrow{x}_n=\overrightarrow{x}(n)$.

For every $I$ finite interval of $\N$ we denote by $R_{I}$ the natural surjections $R_{I}:(\sum_{n\in\N}\oplus E_n)_{\infty}\to(\sum_{n\in I}\oplus E_n)_{\infty}$ defined as $R_{I}(\overrightarrow{z})=(\overrightarrow{z}(n))_{n\in I}$. We use a "star" notation $R_I^*$ to regard in a similar manner the natural surjection on the duals, i.e. $R_{I}^*:(\sum_{n\in\N}\oplus E_n^*)_1\to(\sum_{n\in I}\oplus E_n^*)_1$

For $I, J$ intervals of $\N$ we say that $I, J$ are successive denoted as $I<J$ if $\max I\leq\min J$. Let $I_1<I_2<I_3$ such that $\max I_i+1=\min I_{i+1}$ for every $i=1,2$.
For vectors $\overrightarrow{x}_1,\overrightarrow{x}_2,\overrightarrow{x}_3$ such that $\overrightarrow{x}_i\in(\sum_{k\in I_i}\oplus E_n)_{\infty}$  we denote by $(\overrightarrow{x}_1,\overrightarrow{x}_2,\overrightarrow{x}_3)$ the vector $\overrightarrow{y}\in(\sum_{n=\min I_1}^{\max I_3}\oplus E_n)_{\infty}$  defined as $\overrightarrow{y}(n)=\overrightarrow{x_i}(n)$ whenever $n\in I_i$. Similarly we define vectors $(\overrightarrow{f}_1,\overrightarrow{f}_2,\overrightarrow{f}_3)$ in $(\sum_n\oplus E_n^*)_1$ where $\overrightarrow{f}_i\in(\sum_{k\in I_i}\oplus E_n^*)_1$.
\end{notation}

We now recall the definition of classical $\mathcal{L}^{\infty}$ spaces (see \cite{AH}),a generalised form of which will be used in order to define the new class of spaces, namely the BD $\mathcal{L}_{\infty}$ sums of Banach spaces.
\begin{definition}
We say that a separable Banach space $X$ is $\mathcal{L}_{\infty,C}$ where $C>0$ is a constant, if there exists a strictly increasing sequence $(Y_n)_{n\in\N}$ of subspaces of $X$ such that $Y_n$ is $C$- isomorphic to $\ell_{\infty}(\dim Y_n)$ for every $n\in\N$ and $X=\overline{\cup_{n\in\N}Y_n}$.
\end{definition}
We also recall that for Banach spaces $Z,W$ and a constant $M>0$ we say that $Z$ is $M$- isomorphic to $W$ if there exists $T:Z\to W$ such that $\|T\|\|T^{-1}\|\leq M$.

\begin{definition}\label{mathcal}
Let $(X_n,\|\cdot\|_n)_{n\in\N}$ be a sequence of separable Banach spaces. A Banach space $\mathcal{Z}$ is called a
Bourgain Delbaen(BD)-$\mathcal{L}^{\infty}$-sum of the sequence
$(X_n,\|\cdot\|_n)_n$, denoted as $\mathcal{Z}=(\sum_{n=1}^{\infty}\oplus X_n)_{BD}$, if there exists a sequence
$(\Delta_n)_{n\in\N}$ of finite, pairwise disjoint subsets of $\N$
and the following hold:
\begin{enumerate}
\item The space $\mathcal{Z}$ is a subspace of
$(\sum_{n=1}^{\infty}\oplus(X_n\oplus\ell_{\infty}(\Delta_n))_{\infty})_{\infty}$. \item There exist $C>0$ and for every $n$,
a linear operator
\[i_n:\sum_{k=1}^n\oplus(X_k\oplus\ell^{\infty}(\Delta_k))\to(\sum_{n=1}^{\infty}\oplus(X_n\oplus\ell_{\infty}(\Delta_n))_{\infty})_{\infty}\]
such that

\begin{enumerate}
\item $\|i_n\|\leq C$ for every $n\in\mathbb{N}$. \item For
$\overrightarrow{x}\in\sum_{k=1}^n\oplus(X_k\oplus\ell_{\infty}(\Delta_k))$ we have that
$R_{[1,n]}(i_n(\overrightarrow{x}))=\overrightarrow{x}$, $R_{(n,\infty)}(i_n(\overrightarrow{x}))\in \sum_{k=n+1}^{\infty}\oplus\ell_{\infty}(\Delta_k)$ while for every $l\geq n+1$ we have that $i_l(R_{[1,l]}i_n(\overrightarrow{x}))=i_n(\overrightarrow{x})$.
\end{enumerate}
\item Setting
$Y_n=i_n[\sum_{k=1}^n\oplus(X_k\oplus\ell^{\infty}(\Delta_k))]$, the
union $\cup_n Y_n$ is dense in $\mathcal{Z}$.
\end{enumerate}

\end{definition}

In order to simplify the symbolisms, for $I$ interval of $\N$ we shall write $\sum_{k\in I}\oplus(X_k\oplus\ell_{\infty}(\Delta_k))$ instead of $(\sum_{k\in I}\oplus(X_k\oplus\ell_{\infty}(\Delta_k))_{\infty})_{\infty}$.

\subsection{The general construction }
We now present the basic ingredients of constructing this new type of spaces which is based on the Bourgain-Delbaen(BD)-
method of construction (see \cite{BD}).

Let $(X_n,\|\cdot\|_n)_{n\in\N}$ be a sequence of separable Banach spaces and let also $(\Delta_n)_{n\in\N}$ be a sequence of pairwise disjoint intervals of $\N$. We denote by $\Gamma$ the union $\Gamma=\cup_{n\in\N}\Delta_n$ and use letters as $\gamma,\xi,\eta$ to denote elements of the sets $\Gamma$.
For every $\gamma\in\Delta_n$ we assign a linear
functional
$c_{\gamma}^*:\sum_{k=1}^{n-1}(X_k\oplus\ell_{\infty}(\Delta_k))\to
\R$
and for $n<m\in\N$ we define by induction a linear operator
\[i_{n,m}:\sum_{k=1}^n\oplus
(X_k\oplus\ell_{\infty}(\Delta_k)) \rightarrow\sum_{k=1}^m\oplus
(X_k\oplus\ell_{\infty}(\Delta_k))\] as follows: For $m=n+1$ and
$\overrightarrow{x}\in\sum_{k=1}^n\oplus(X_k\oplus\ell^{\infty}(\Delta_k))$
\[i_{n,n+1}(\overrightarrow{x})=(\overrightarrow{x},\overrightarrow{x}_{n+1})\] where $\overrightarrow{x}_{n+1}=(0_{X_{n+1}},(c_{\gamma}^*(\overrightarrow{x})_{\gamma\in\Delta_{n+1}})\in X_{n+1}\oplus\ell_{\infty}(\Delta_{n+1})$.

Then assuming that $i_{n,m}$ has been defined, we set
$i_{n,m+1}=i_{m,m+1}\circ i_{n,m}$. It is clear that for every $n<l<m$,
$i_{n,m}=i_{l,m}\circ i_{n,l}$.

\begin{remark}\label{bp} \textbf{The boundeness principle for $(i_{n,m})_{n\leq m}$}.
Assume that there exists $C>0$ such that $\|i_{n,m}\|\leq C$ for every $n\leq m$. We define $i_n:\sum_{k=1}^n\oplus
(X_n\oplus\ell^{\infty}(\Delta_n))\to\ell_{\infty}[(X_n,\ell_1(\Delta_n))]$ as the direct limit \[i_n=\varinjlim_{m\rightarrow\infty} i_{n,m}.\] It follows that the operators $i_n$ are uniformly bounded by $C$ and since $\|i_n(\overrightarrow{x})\|_{\infty}\geq\|\overrightarrow{x}\|_{\infty}$ the operators $i_n$ are isomorphic embeddings. We can then define $Y_n=i_n[\sum_{k=1}^n\oplus (X_k\oplus\ell_{\infty}(\Delta_k))]$ and $\mathcal{Z}=\overline{\cup_{n\in\N}Y_n}$. It immediate follows that $Y_n\subset Y_{n+1}$, each $Y_n$ is C-isomorphic to $\sum_{k=1}^n\oplus (X_k\oplus\ell_{\infty}(\Delta_k))$ and moreover considering Definition \ref{mathcal} we conclude by the above that $\mathcal{Z}=(\sum_n\oplus X_n)_{BD}$.

Viewing $\mathcal{Z}$ as a close subspace of $(\sum_{n=1}^{\infty}\oplus(X_n\oplus\ell_{\infty}(\Delta_n))_{\infty})_{\infty}$ we restrict the operators $R_{[1,n]}:\mathcal{Z}\to\sum_{k=1}^n\oplus(X_k\oplus\ell^{\infty}(\Delta_k))$ for every $n\in\N$ and we also restrict the image of $i_n$ upon $\mathcal{Z}$, $i_n:\sum_{k=1}^n\oplus(X_k\oplus\ell^{\infty}(\Delta_k))\to\mathcal{Z}$. We can also extend $c_{\gamma}^*:\mathcal{Z}\to\R$ for every $\gamma\in\Delta_n$ as $c_{\gamma}^*(z)=c_{\gamma}^*(R_{[1,n]}z)$.

 We note that the definition of $i_n$ yields that for $x=i_n(\overrightarrow{u})\in Y_n$,  \[\|x\|=\max\{\|\overrightarrow{u}\|_{\infty}, \sup_{\gamma\in\Gamma}|c_{\gamma}^*(x)| \}.\]
\end{remark}

For each $n$, we define projections $P_{[1,n]}:\mathcal{Z} \to \mathcal{Z}$ by the rule
$P_{[1,n]}=i_n\circ R_{[1,n]}|_{\mathcal{Z}}$ with $Im P_{[1,n]}=Y_n$. An easy observation is the following.

\begin{lemma}\label{dual1}
Let $n\in\N$ and $x\in Y_n$. Suppose that there exists $m<n$ such that $P_{[1,m]}x=0$ then there exists $\overrightarrow{u}\in\sum_{k=m+1}^n\oplus(X_k\oplus\ell_{\infty}(\Delta_k))$ such that $x=i_n(\overrightarrow{0}_{1,m},\overrightarrow{u})$.
\end{lemma}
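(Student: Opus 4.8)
\emph{Plan.} The statement is a direct consequence of unwinding the definitions recalled in Remark~\ref{bp}: being in $Y_n$ gives a canonical preimage vector $\overrightarrow{v}$ under $i_n$, the hypothesis $P_{[1,m]}x=0$ together with the injectivity of $i_m$ forces the first $m$ coordinates of $\overrightarrow{v}$ to vanish, and then $\overrightarrow{v}$ itself is of the asserted form.

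\emph{Step 1: a canonical representation of $x$.} For $x\in Y_n$ write $\overrightarrow{v}=R_{[1,n]}x\in\sum_{k=1}^n\oplus(X_k\oplus\ell_\infty(\Delta_k))$. Since $x\in Y_n$ we have $x=i_n(\overrightarrow{w})$ for some $\overrightarrow{w}$, and by the restriction property $R_{[1,n]}(i_n(\overrightarrow{w}))=\overrightarrow{w}$ from Definition~\ref{mathcal}(2)(b) we get $\overrightarrow{v}=\overrightarrow{w}$ and hence $x=i_n(\overrightarrow{v})$; equivalently $P_{[1,n]}$ restricts to the identity on $Y_n$.

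\emph{Step 2: the first $m$ coordinates vanish.} By the definition $P_{[1,m]}=i_m\circ R_{[1,m]}|_{\mathcal Z}$ we have $0=P_{[1,m]}x=i_m(R_{[1,m]}x)$, where $R_{[1,m]}x$ is just the tuple of the first $m$ coordinates of $x$, i.e. of $\overrightarrow{v}$. By the boundedness principle in Remark~\ref{bp} the map $i_m$ satisfies $\|i_m(\overrightarrow{y})\|_\infty\ge\|\overrightarrow{y}\|_\infty$, so $i_m$ is an isomorphic embedding, in particular injective; therefore $R_{[1,m]}x=0$, that is $\overrightarrow{v}(k)=0$ for all $k\le m$.

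\emph{Step 3: conclusion.} Put $\overrightarrow{u}=(\overrightarrow{v}(k))_{k=m+1}^n\in\sum_{k=m+1}^n\oplus(X_k\oplus\ell_\infty(\Delta_k))$. Then, in the notation of Notation~\ref{terminology}, $\overrightarrow{v}=(\overrightarrow{0}_{1,m},\overrightarrow{u})$, and combining with Step~1 gives $x=i_n(\overrightarrow{v})=i_n(\overrightarrow{0}_{1,m},\overrightarrow{u})$, as required. I do not expect any genuine obstacle here; the only points needing a word of justification are that $P_{[1,n]}$ acts as the identity on $Y_n$ and that $i_m$ is injective, both of which are immediate from the construction recorded in Remark~\ref{bp}.
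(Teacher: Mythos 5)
Your proof is correct and follows essentially the same route as the paper's: both write $x=i_n(R_{[1,n]}x)$ using that $P_{[1,n]}$ is the identity on $Y_n$, deduce $R_{[1,m]}x=0$ from $P_{[1,m]}x=i_m(R_{[1,m]}x)=0$, and read off $\overrightarrow{u}=R_{[m+1,n]}x$. The only difference is that you make explicit the injectivity of $i_m$, which the paper leaves implicit.
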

\bp We first treat (1). Note that $x=P_{[1,n]}(x)=i_n(\overrightarrow{y})$ where $\overrightarrow{y}=R_{[1,n]}(x)\in \sum_{k=1}^n\oplus(X_k\oplus\ell_{\infty}(\Delta_k))$. Since $P_{[1,m]}x=0_{\mathcal{Z}}=i_m(R_{[1,m]}x)$ it follows that $R_{[1,m]}x=\overrightarrow{0}_{1,m}$ which yields that $\overrightarrow{y}=(\overrightarrow{0}_{1,m},\overrightarrow{u})$ such that $\overrightarrow{u}=R_{[m+1,n]}(x)\in\sum_{k=m+1}^n\oplus(X_k\oplus\ell_{\infty}(\Delta_k))$.
\ep

\subsection{Schauder Decomposition of our space $\mathcal{Z}$}
For $n\leq m$ we define $P_{(n,m]}=i_mR_{[1,m]}|_{\mathcal{Z}}-i_nR_{[1,n]}|_{\mathcal{Z}}$. Since $i_n$ are uniformly bounded (property b) we have that $\|P_{I}\|<2C$ for every interval $I\subset\N$. Moreover, by property (d) of Definition \ref{mathcal} for every $n,m\in\N$ it is clear that $P_{[1,n]}\circ P_{[1,m]}=P_{[1,\min\{m,n\}]}$. Since $\mathcal{Z}=\overline{\cup_{n\in\N} P_{[1,n]}[\mathcal{Z}]}$, setting $Z_1=P_{[1,1]}$ and $Z_n=P_{\{n\}}[\mathcal{Z}]$, where $P_{\{n\}}=P_{[1,n]}-P_{[1,n-1]}$ it follows easily that $(Z_n)_{n\in\N}$ is a Schauder decomposition of the space $\mathcal{Z}$.

Note that for every $k\leq m$ an element $x$ belongs in $\sum_{n=k}^mZ_n$ if and only if $x\in Y_m$ and $P_{[1,k]}x=0$.
We finally note that by Lemma \ref{dual1} \[Z_n=\{(z=i_n(\overrightarrow{0}_{1,n},\overrightarrow{u}):\ \ \overrightarrow{u}\in X_n\oplus\ell_{\infty}(\Delta_n)\}.\]

Let $P_{[1,n]}^*:\mathcal{Z}^*\to\mathcal{Z}^*$ be the adjoint projections. We define $\mathcal{Z}_{[1,n]}^*=Im P_{[1,n]}^*$ for each $n\in\N$ and we observe the following.

\begin{lemma}\label{shrinking}
For every $n$, the restricted operator $i_n^*:\mathcal{Z}_{[1,n]}^*\to(\sum_{k=1}^n(X_k^*\oplus\ell_1(\Delta_k))_1)_1$ is an isomorphism onto satisfying that $\|z^*\|\leq\|i_n^*(z^*)\|_1\leq C\|z^*\|$ for every $z^*\in Z_{[1,n]}^*$.
\end{lemma}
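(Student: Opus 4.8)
The plan is to build the isomorphism explicitly from the two projections $P_{[1,n]}$ and $i_n$, using the fact that $i_n$ is a $C$-isomorphic embedding of $\sum_{k=1}^n\oplus(X_k\oplus\ell_\infty(\Delta_k))$ onto $Y_n$ together with the identity $R_{[1,n]}\circ i_n = \id$ on $\sum_{k=1}^n\oplus(X_k\oplus\ell_\infty(\Delta_k))$. First I would recall that $P_{[1,n]} = i_n R_{[1,n]}|_{\mathcal Z}$ is a projection onto $Y_n$, so its adjoint $P_{[1,n]}^*$ is a projection onto $\mathcal Z_{[1,n]}^* = \im P_{[1,n]}^*$, and by the bound $\|P_{[1,n]}\| < 2C$ (in fact $\|P_{[1,n]}\|=\|i_n R_{[1,n]}\|\le C$ since $\|i_n\|\le C$ and $\|R_{[1,n]}\|\le 1$) we get $\|P_{[1,n]}^*\|\le C$. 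The natural candidate for the map on duals is the restriction of $i_n^*$, which sends $\mathcal Z^*$ into $\big(\sum_{k=1}^n(X_k^*\oplus\ell_1(\Delta_k))_1\big)_1$; the content of the lemma is that, restricted to $\mathcal Z_{[1,n]}^*$, this map is bijective with the stated two-sided norm estimate.

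The key steps, in order, are: (1) For the lower estimate $\|z^*\|\le\|i_n^*(z^*)\|_1$, use that for each $z^*\in\mathcal Z^*$ with $z^* = P_{[1,n]}^* z^*$ and each $z\in\mathcal Z$ of norm $\le 1$ we have $z^*(z) = z^*(P_{[1,n]}z) = z^*(i_n(R_{[1,n]}z)) = (i_n^* z^*)(R_{[1,n]}z)$, and since $\|R_{[1,n]}z\|_\infty\le\|z\|\le 1$ this gives $|z^*(z)|\le\|i_n^*(z^*)\|_1$; taking the supremum over $z$ yields the bound. (2) For the upper estimate $\|i_n^*(z^*)\|_1\le C\|z^*\|$, simply note $\|i_n^*\|=\|i_n\|\le C$. (3) For surjectivity, given any $\overrightarrow f\in\big(\sum_{k=1}^n(X_k^*\oplus\ell_1(\Delta_k))_1\big)_1$, define $z^* := R_{[1,n]}^* \overrightarrow f|_{\mathcal Z}\in\mathcal Z^*$ — that is, $z^*(z) = \overrightarrow f(R_{[1,n]}z)$; one checks $z^*\in\mathcal Z_{[1,n]}^*$ because $R_{[1,n]}\circ P_{[1,n]} = R_{[1,n]}$ on $\mathcal Z$ (which follows from $R_{[1,n]}\circ i_n = \id$), hence $P_{[1,n]}^* z^* = z^*$, and then $i_n^*(z^*) = \overrightarrow f$ because $(i_n^* z^*)(\overrightarrow x) = z^*(i_n\overrightarrow x) = \overrightarrow f(R_{[1,n]} i_n \overrightarrow x) = \overrightarrow f(\overrightarrow x)$ for all $\overrightarrow x$ in the finite sum. (4) Injectivity on $\mathcal Z_{[1,n]}^*$ is then immediate from the lower estimate in (1), or alternatively from the fact that $i_n$ has dense range onto $Y_n$ and $P_{[1,n]}^* z^* = z^*$ forces $z^*$ to be determined by its values on $Y_n$.

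The main obstacle — really the only subtlety — is keeping the identifications straight: one must verify carefully that the composition identities $R_{[1,n]}\circ i_n = \id$, $i_n\circ R_{[1,n]} = P_{[1,n]}$, and $R_{[1,n]}\circ P_{[1,n]} = R_{[1,n]}$ hold as operators on the correct domains (using property (d) of Definition \ref{mathcal}, i.e. the compatibility $i_l(R_{[1,l]} i_n(\overrightarrow x)) = i_n(\overrightarrow x)$ for $l\ge n$, which is what makes $i_n$ well-defined as a direct limit landing in $\mathcal Z$), and then to dualize each of these cleanly. Once those three identities are in hand, the norm estimates and the bijectivity fall out as indicated, with no genuine computation required. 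I would present the argument for the lower bound first since it simultaneously delivers injectivity, then dispatch the upper bound in one line, and close with the surjectivity construction.
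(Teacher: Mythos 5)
Your proposal is correct and follows essentially the same route as the paper: the lower bound comes from the identity $z^*(z)=(i_n^*z^*)(R_{[1,n]}z)$ for $z^*\in\im P_{[1,n]}^*$ together with $\|R_{[1,n]}z\|_\infty\le\|z\|$, the upper bound from $\|i_n^*\|=\|i_n\|\le C$, and surjectivity from the relation $P_{[1,n]}\circ i_n=i_n$ (equivalently $i_n^*=i_n^*\circ P_{[1,n]}^*$). The only cosmetic difference is that you exhibit the preimage of $\overrightarrow{f}$ explicitly as $\overrightarrow{f}\circ R_{[1,n]}|_{\mathcal Z}$, whereas the paper first invokes surjectivity of $i_n^*$ on all of $\mathcal Z^*$ (adjoint of an isomorphic embedding) and then projects; both are fine.
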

\bp Since $\|i_n^*\|=\|i_n\|$ for every $n\in\N$ the right hand inequality is trivial. For the left hand inequality it is enough to show that for every $f\in \mathcal{Z}_{[1,n]}^*$ such that $\|f\|=1$ we have that $\|i_n^*(f)\|\geq 1-\e$ for every $\e>0$. Let $z^*\in \mathcal{Z}_{[1,n]}^*$ such that $\|z^*\|=1$ and $\e>0$. Let also $z\in\mathcal{Z}$ such that $\|z\|=1$, $|z^*(z)|\geq 1-\e$. Then, we have that $z^*=P_{[1,n]}^*u^*$ for $u^*\in\mathcal{Z}^*$ and we set $\overrightarrow{x}=R_{[1,n]}(z)$. A simple observation is that $u^*(i_n\overrightarrow{x})=u^*(i_n R_{[1,n]}z)=u^*P_{[1,n]}z=P_{[1,n]}^*u^*(z)=z^*(z)\geq 1-\e$. Therefore, $\|i_n^*(z^*)\|_1\geq|i_n^*z^*(\overrightarrow{x})|=|z^*(i_n\overrightarrow{x})|=|u^*P_{[1,n]}i_n\overrightarrow{x}|=|u^*(i_n\overrightarrow{x})|\geq 1-\e$. It remains to show that $i_n^*|_{Z_{[1,n]}^*}$ is onto. Observe that $i_n^*:\mathcal{Z}^*\to(\sum_{k=1}^n(X_n^*\oplus\ell_1(\Delta_n))_1)_1$ is onto since $i_n$ is isomorphic embedding  and thus for $\overrightarrow{f}\in(\sum_{k=1}^n(X_n^*\oplus\ell_1(\Delta_n))_1)_1$ there exists $g\in \mathcal{Z}^*$ such that $i_n^*(g)=\overrightarrow{f}$. Note that $i_n^*(g)=i_n^*(P_{[1,n]}^*g)$ and the result follows.
\ep

The next lemma yields that for every $l\geq n$ the restriction of the operator $i_l^*:\mathcal{Z}^*\to(\sum_{k=1}^l(X_k^*\oplus\ell_1(\Delta_k))_1)_1$ upon $\mathcal{Z}_{[1,n]}^*$ extends the operator $i_n^*:\mathcal{Z}_{[1,n]}^*\to(\sum_{k=1}^n(X_k^*\oplus\ell_1(\Delta_k))_1)_1$.

\begin{lemma}\label{dual2}
Let $n\in\N$ and $f\in Z_{[1,n]}^*$. Then, for every $l\geq n$ we have that $i_l^*(f)=(i_n^*(f),\overrightarrow{0}_{n+1,l})$.
\end{lemma}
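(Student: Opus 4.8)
The plan is to establish the identity by evaluating both functionals on an arbitrary vector of the finite $\ell_{\infty}$-sum and reducing everything to the fact that $f$ factors through $P_{[1,n]}$. First I would write $f=P_{[1,n]}^*u^*$ for some $u^*\in\mathcal{Z}^*$, which is legitimate since $f\in Z_{[1,n]}^*=\im P_{[1,n]}^*$. Fix $l\ge n$. Since $i_l^*(f)\in(\sum_{k=1}^l(X_k^*\oplus\ell_1(\Delta_k))_1)_1$ and an element of this $\ell_1$-sum dual is determined by — and identified with — the vector of its block coordinates, i.e.\ by its action on vectors $\overrightarrow{x}\in\sum_{k=1}^l\oplus(X_k\oplus\ell_{\infty}(\Delta_k))$, it suffices to show that, writing $\overrightarrow{x}^{(1)}=R_{[1,n]}\overrightarrow{x}$, one has $i_l^*(f)(\overrightarrow{x})=i_n^*(f)(\overrightarrow{x}^{(1)})$ for every such $\overrightarrow{x}$; this is exactly the assertion $i_l^*(f)=(i_n^*(f),\overrightarrow{0}_{n+1,l})$.

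The computation is then short. On one hand,
\[
i_l^*(f)(\overrightarrow{x})=f(i_l\overrightarrow{x})=u^*\bigl(P_{[1,n]}i_l\overrightarrow{x}\bigr)=u^*\bigl(i_n R_{[1,n]}i_l\overrightarrow{x}\bigr),
\]
and property (c) of Definition \ref{mathcal} applied with the interval $[1,l]$ gives $R_{[1,l]}(i_l\overrightarrow{x})=\overrightarrow{x}$, so restricting to the first $n$ coordinates yields $R_{[1,n]}(i_l\overrightarrow{x})=\overrightarrow{x}^{(1)}$ and hence $i_l^*(f)(\overrightarrow{x})=u^*(i_n\overrightarrow{x}^{(1)})$. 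On the other hand, running the same chain of equalities with $n$ in place of $l$ and $\overrightarrow{x}^{(1)}$ in place of $\overrightarrow{x}$, and using $R_{[1,n]}(i_n\overrightarrow{x}^{(1)})=\overrightarrow{x}^{(1)}$, gives $i_n^*(f)(\overrightarrow{x}^{(1)})=u^*(i_n\overrightarrow{x}^{(1)})$. Comparing the two expressions finishes the proof.

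I do not expect a genuine obstacle here: the argument is a direct unwinding of the definitions. The only points that require a little care are keeping the bookkeeping of the restriction maps $R_{[1,\cdot]}$ straight, invoking precisely the part of property (c) that is actually used (namely $R_{[1,l]}\circ i_l=\id$, not the stronger coherence relation $i_l\circ R_{[1,l]}\circ i_n=i_n$), and remembering that an element of the $\ell_1$-sum dual is pinned down by its restriction to the $\ell_{\infty}$-sum — which is also why it is harmless that $i_l^*$ is a priori defined on all of $\mathcal{Z}^*$.
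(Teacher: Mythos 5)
Your proof is correct and follows essentially the same route as the paper's: both write $f=P_{[1,n]}^*u^*$ and reduce $f(i_l\overrightarrow{x})$ to $u^*(i_n R_{[1,n]}i_l\overrightarrow{x})=u^*(i_n R_{[1,n]}\overrightarrow{x})$ via the compatibility property $R_{[1,l]}\circ i_l=\id$ from Definition \ref{mathcal} (the paper merely splits the verification into the two blocks $[1,n]$ and $(n,l]$ instead of treating a general $\overrightarrow{x}$ at once). The only cosmetic slip is your citation of ``property (c)''; the relevant item is (2)(b) of Definition \ref{mathcal}.
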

\bp Let $g\in\mathcal{Z}^*$ such that $f=P_{[1,n]}^*g$. For $\overrightarrow{x}\in \sum_{k=1}^n\oplus X_k\oplus\ell_{\infty}(\Delta_k)$ we have that $i_l^*f(\overrightarrow{x},\overrightarrow{0}_{n+1,l})=f(i_l(\overrightarrow{x},\overrightarrow{0}_{n+1,l}))=
 P_{[1,n]}^*g(i_l(\overrightarrow{x},\overrightarrow{0}_{n+1,l}))=g(i_n\overrightarrow{x})=g(P_{[1,n]}i_n\overrightarrow{x})=i_n^*f(\overrightarrow{x})$.
  Moreover, if $\overrightarrow{x}\in\sum_{k=n+1}^l\oplus (X_k\oplus\ell_{\infty}(\Delta_k))$ then

\begin{eqnarray*}
i_l^*f(\overrightarrow{0}_{1,n},\overrightarrow{x})&=&
  f(i_l(\overrightarrow{0}_{1,n},\overrightarrow{x}))=
  (P_{[1,n]}^*g)(i_l(\overrightarrow{0}_{1,n},\overrightarrow{x},\overrightarrow{0}_{n+1,l})) \\ &=&g(i_n(\overrightarrow{0}_{1,n}))=0.
\end{eqnarray*}
\ep

\section{The dual of $(\sum_n\oplus X_n)_{BD}$ }
Before proceeding to the main construction we shall investigate the dual of BD $\mathcal{L}^{\infty}$ sums of a sequence of separable Banach spaces. We start by fixing such a sequence $(X_n,\|\cdot\|_n)_{n\in\N}$ and let $\mathcal{Z}=(\sum_n\oplus X_n)_{BD}$ satisfying Definition \ref{mathcal}. As we will see we have more things to say considering the dual in the case that the decomposition of the space $\mathcal{Z}$ is shrinking.

We need first to define the following operator \[\Phi:\overline{\cup_{n=1}^{\infty}Z_{[1,n]}^*}\to (\sum_{n=1}^{\infty}\oplus(X_n^*\oplus\ell_1(\Delta_n))_1)_1\] as follows:

 For $f\in
\mathcal{Z}^*$, we define $\Phi(f)=\lim_{n\to\infty}i_n^*P_{[1,n]}^*(f)$. Lemma \ref{dual2} and Lemma \ref{shrinking} yield that $\Phi$ is well defined and moreover the extended $\Phi:\overline{\cup_{n=1}^{\infty}Z_{[1,n]}^*}\to (\sum_{n=1}^{\infty}\oplus(X_n^*\oplus\ell_1(\Delta_n))_1)_1$ is an isomorphism onto.

\begin{proposition}\label{boundeness}
If the decomposition $(Z_n)_{n\in\N}$ of $\mathcal{Z}$ is shrinking, then $\mathcal{Z}^*$ is isomorphic with
$(\sum_{n=1}^{\infty}\oplus(X_n^*\oplus\ell_1(\Delta_n))_1)_1$.
\end{proposition}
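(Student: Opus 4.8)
The plan is to show that when $(Z_n)_n$ is shrinking, the operator $\Phi$ defined just above extends to an isomorphism of $\mathcal{Z}^*$ onto $(\sum_{n=1}^\infty\oplus(X_n^*\oplus\ell_1(\Delta_n))_1)_1$. By Lemma~\ref{shrinking} and Lemma~\ref{dual2} we already know $\Phi$ is a well-defined isomorphism onto its image when restricted to $\overline{\cup_n Z_{[1,n]}^*}$, so the whole content of the proposition is the identification $\mathcal{Z}^*=\overline{\cup_n Z_{[1,n]}^*}$ together with the surjectivity of $\Phi$.

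First I would record the general duality fact for a Schauder decomposition: if $(Z_n)_n$ is a Schauder decomposition of $\mathcal{Z}$ with associated projections $P_{[1,n]}$, then $(Z_n)_n$ is \emph{shrinking} precisely when the adjoint partial-sum projections $P_{[1,n]}^*$ converge strong-$*$ to the identity on $\mathcal{Z}^*$, equivalently when $\mathcal{Z}^*=\overline{\cup_n \mathcal{Z}_{[1,n]}^*}=\overline{\cup_n Z_{[1,n]}^*}$; one also gets that $(Z_n^*)_n:=(P_{\{n\}}^*[\mathcal{Z}^*])_n$ is then a Schauder decomposition of $\mathcal{Z}^*$. This is the standard characterization (the analogue of the James theorem for bases), and I would cite it rather than reprove it. Granting it, the domain of $\Phi$ is all of $\mathcal{Z}^*$, and the left-hand inequality in Lemma~\ref{shrinking} shows $\Phi$ is bounded below on each $\mathcal{Z}_{[1,n]}^*$ with a uniform constant, hence bounded below on the closure $\mathcal{Z}^*$; the uniform bound $\|i_n\|\le C$ gives $\|\Phi\|\le C$. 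So $\Phi$ is an into-isomorphism of $\mathcal{Z}^*$.

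It remains to prove $\Phi$ is onto. Given $\overrightarrow{f}=(\overrightarrow{f}(k))_k\in(\sum_{k=1}^\infty\oplus(X_k^*\oplus\ell_1(\Delta_k))_1)_1$, for each $n$ the truncation $\overrightarrow{f}_n:=R_{[1,n]}^*\overrightarrow{f}=(\overrightarrow{f}(1),\dots,\overrightarrow{f}(n))$ lies in $(\sum_{k=1}^n(X_k^*\oplus\ell_1(\Delta_k))_1)_1$, and by the surjectivity part of Lemma~\ref{shrinking} there is a unique $g_n\in Z_{[1,n]}^*$ with $i_n^*(g_n)=\overrightarrow{f}_n$ and $\|g_n\|\le\|\overrightarrow{f}_n\|_1\le\|\overrightarrow{f}\|_1$. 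Lemma~\ref{dual2} applied to $g_n\in Z_{[1,n]}^*$ gives $i_m^*(g_n)=(i_n^*(g_n),\overrightarrow{0}_{n+1,m})=(\overrightarrow{f}_n,\overrightarrow{0})$ for $m\ge n$, which forces $P_{[1,n]}^*g_m=g_n$ for $m\ge n$ (using that $i_m^*$ is bounded below on $\mathcal{Z}_{[1,m]}^*$, so it detects the equality after composing with $P_{[1,n]}^*$, or more directly: $g_n$ and $P_{[1,n]}^*g_m$ are both in $Z_{[1,n]}^*$ and have the same image under the injective $i_n^*$). Thus $(g_n)_n$ is a coherent bounded sequence: $g_n=P_{[1,n]}^*g_m$, so the ``blocks'' $g_n-g_{n-1}\in Z_n^*$ are the partial differences of a single element. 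Since $(Z_n)_n$ is shrinking, $(Z_n^*)_n$ is a Schauder decomposition of $\mathcal{Z}^*$, so the bounded coherent sequence $(g_n)_n$ converges in norm to some $g\in\mathcal{Z}^*$ with $P_{[1,n]}^*g=g_n$; then $\Phi(g)=\lim_n i_n^*P_{[1,n]}^*g=\lim_n i_n^*(g_n)=\lim_n(\overrightarrow{f}_n,\overrightarrow{0})=\overrightarrow{f}$. Hence $\Phi$ is onto and is the desired isomorphism.

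The main obstacle is the middle step — establishing that "shrinking" genuinely delivers norm-convergence of the coherent sequence $(g_n)_n$, i.e. that $(Z_n^*)_n$ is a Schauder decomposition of $\mathcal{Z}^*$ and not merely a $w^*$-decomposition. This is exactly where the shrinking hypothesis is used: without it one only gets $w^*$-convergence of $(g_n)_n$ to an element of $\mathcal{Z}^*$ that need not be approximated in norm by elements of $\cup_n Z_{[1,n]}^*$, and $\Phi$ would fail to be onto (its range would be a proper, non-closed-under-the-relevant-limit subspace). Everything else is bookkeeping with Lemmas~\ref{shrinking} and~\ref{dual2}; the shrinking input is the single non-formal ingredient, and I would invoke the classical characterization of shrinking Schauder decompositions to supply it.
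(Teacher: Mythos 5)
Your proof is correct and follows essentially the same route as the paper: the paper simply observes that the shrinking hypothesis gives $\mathcal{Z}^*=\overline{\cup_n\mathcal{Z}_{[1,n]}^*}$ and then invokes the already-recorded fact that the extended $\Phi$ is an isomorphism onto that closure's image; you are filling in the into-isomorphism and the surjectivity in detail. One small repair in your surjectivity step: a Schauder decomposition of $\mathcal{Z}^*$ does not by itself make a bounded coherent sequence $(g_n)$ norm-convergent (for that you need the dual decomposition to be boundedly complete, which shrinking does supply), but here you can bypass the issue entirely, since Lemma~\ref{shrinking} and Lemma~\ref{dual2} give $\|g_m-g_n\|\le\|i_m^*(g_m-g_n)\|_1=\sum_{k=n+1}^m\|\overrightarrow{f}(k)\|\to 0$, so $(g_n)$ is norm-Cauchy directly from the $\ell_1$-structure of the target with no second appeal to shrinking; alternatively, surjectivity follows at once because the range of the into-isomorphism $\Phi$ is closed and contains the dense set of finitely supported vectors.
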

\bp  We just observe that if the decomposition is shrinking then $\mathcal{Z}^*=\overline{\cup_{n=1}^{\infty}\mathcal{Z}_{[1,n]}^*}$ and thus the isomorphism $\Phi:\mathcal{Z}^*\to (\sum_{n=1}^{\infty}\oplus(X_n^*\oplus\ell_1(\Delta_n))_1)_1$ defined as above yields the result.
\ep

The following results concern spaces $\mathcal{Z}=(\sum_n\oplus X_n)_{BD}$ with a shrinking schauder decomposition and are helpful in terms of studying the operators acting on $\mathcal{Z}$. We recall that for $I$ interval of $\N$, $R_I^*$
 denotes the natural restriction $R_I^*:(\sum_{n=1}^{\infty}\oplus(X_n^*\oplus\ell_1(\Delta_n))_1)_1\to (\sum_{n\in I}\oplus(X_n^*\oplus\ell_1(\Delta_n))_1)_1$.

\begin{lemma}\label{cut}
Let $\e>0$, $f\in\mathcal{Z}^*$ and $m<n$. Let also $x\in Y_n$ such that $\|x\|\leq 1$, $P_{[1,m]}(x)=0$ and $|f(x)|\geq\e$. Then there exists $l\geq n$ and
$\overrightarrow{z}\in\sum_{k=m+1}^l\oplus (X_k\oplus\ell_{\infty}(\Delta_k))$ such that $\|\overrightarrow{z}\|_{\infty}\leq 1$ and
$|\Phi f(\overrightarrow{0}_{1,m},\overrightarrow{z},\overrightarrow{0}_{l+1})|\geq\frac{\e}{2}$. In particular,
$\|R_{[m+1,l]}^*(\Phi f)\|_1\geq\frac{\e}{2}$
\end{lemma}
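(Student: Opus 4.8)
The plan is to transfer the lower bound $|f(x)| \geq \e$ from the space $\mathcal{Z}$ to the dual sum $(\sum_n \oplus (X_n^* \oplus \ell_1(\Delta_n))_1)_1$ via the isomorphism $\Phi$, localizing supports along the way. First I would replace $f$ by $P_{[1,n]}^* f$: since $x \in Y_n$ we have $x = P_{[1,n]} x$, hence $f(x) = (P_{[1,n]}^* f)(x)$, so without loss of generality $f \in \mathcal{Z}_{[1,n]}^*$ and $\Phi f = (i_n^* f, \overrightarrow{0}_{n+1,\infty})$ by Lemma \ref{dual2}. Next, using $P_{[1,m]}(x) = 0$ together with Lemma \ref{dual1}, write $x = i_n(\overrightarrow{0}_{1,m}, \overrightarrow{u})$ for some $\overrightarrow{u} \in \sum_{k=m+1}^n \oplus (X_k \oplus \ell_\infty(\Delta_k))$ with $\|\overrightarrow{u}\|_\infty \leq \|x\| \leq 1$. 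Then $|f(x)| = |i_n^*f(\overrightarrow{0}_{1,m}, \overrightarrow{u})| = |\Phi f\,(\overrightarrow{0}_{1,m}, \overrightarrow{u}, \overrightarrow{0}_{n+1,\infty})| \geq \e$, which already gives a vector supported in $[m+1,n]$ of norm at most $1$ on which $|\Phi f|$ is at least $\e$; this is in fact stronger than what the statement asks (one can take $l = n$ and $\overrightarrow{z} = \overrightarrow{u}$, getting $\e$ rather than $\e/2$), so the $\e/2$ in the statement leaves slack.

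The one subtlety is the phrase ``$x \in Y_n$'' versus membership in $\sum_{k=m+1}^n Z_k$: the excerpt notes that $x \in \sum_{k=m+1}^n Z_n$ iff $x \in Y_n$ and $P_{[1,m]} x = 0$, so the hypotheses put $x$ exactly there, and the representation $x = i_n(\overrightarrow{0}_{1,m}, \overrightarrow{u})$ from Lemma \ref{dual1} is immediately available. The last sentence, $\|R_{[m+1,l]}^*(\Phi f)\|_1 \geq \frac{\e}{2}$, then follows because $(\overrightarrow{0}_{1,m}, \overrightarrow{z}, \overrightarrow{0}_{l+1,\infty})$ is norm-$\leq 1$ and supported in $[m+1,l]$, so pairing it against $\Phi f$ factors through $R_{[m+1,l]}^*(\Phi f)$: indeed $\Phi f\,(\overrightarrow{0}_{1,m}, \overrightarrow{z}, \overrightarrow{0}_{l+1,\infty}) = (R_{[m+1,l]}^* \Phi f)(R_{[m+1,l]}(\overrightarrow{0}_{1,m},\overrightarrow{z},\overrightarrow{0}_{l+1,\infty}))$ and $\|R_{[m+1,l]}(\cdots)\|_\infty \leq 1$, whence $\|R_{[m+1,l]}^* \Phi f\|_1 \geq \frac{\e}{2}$ by duality of the $\ell_1$-sum with the $\ell_\infty$-sum.

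I do not expect a genuine obstacle here: the lemma is essentially a bookkeeping translation across $\Phi$, and every ingredient (Lemma \ref{dual1}, Lemma \ref{dual2}, Lemma \ref{shrinking}, and the definition of $\Phi$) is already in place. The only thing to be careful about is keeping the index ranges straight — that $\overrightarrow{u}$ lives on $[m+1,n]$, that $\Phi f$ is genuinely supported in $[1,n]$ so padding with zeros past $n$ is harmless, and that the pairing $\Phi f(\overrightarrow{y})$ for $\overrightarrow{y}$ supported on a finite interval $J$ really does equal $(R_J^* \Phi f)(R_J \overrightarrow{y})$, which is immediate from the coordinatewise definition $\overrightarrow{f}(\overrightarrow{x}) = \sum_n \overrightarrow{f}_n(\overrightarrow{x}_n)$ in Notation \ref{terminology}. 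If one wanted to be faithful to the $\e/2$ in the statement (perhaps anticipating a later refinement where $f$ is only approximately in $\mathcal{Z}_{[1,n]}^*$, or where one must first truncate an infinitely-supported $\Phi f$ to a finite interval $[1,l]$ with $l \geq n$ and lose $\e/2$ in doing so), the argument still goes through verbatim: choose $l$ large enough that $\|R_{(l,\infty)}^* \Phi f\|_1 < \e/2$ and set $\overrightarrow{z} = \overrightarrow{u}$ padded by zeros on $(n,l]$.
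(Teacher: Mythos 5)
Your reduction ``without loss of generality $f\in\mathcal{Z}_{[1,n]}^*$'' is where the argument breaks. Replacing $f$ by $P_{[1,n]}^*f$ does not merely truncate $\Phi f$ beyond coordinate $n$: it changes $\Phi f$ on the coordinates $[1,n]$ as well, because $R_{[1,n]}^*\circ\Phi\neq\Phi\circ P_{[1,n]}^*$. Concretely, the $k$-th coordinate of $\Phi f$ (for $k\leq n$) acts on $\overrightarrow{x}_k$ as $\lim_l f\bigl(i_l(\overrightarrow{0},\ldots,\overrightarrow{x}_k,\ldots,\overrightarrow{0}_l)\bigr)$, and the vector $i_l(\overrightarrow{0},\ldots,\overrightarrow{x}_k,\ldots,\overrightarrow{0}_l)$ is \emph{not} equal to $i_n(\overrightarrow{0},\ldots,\overrightarrow{x}_k,\ldots,\overrightarrow{0}_n)$: the latter equals $i_l$ applied to the same data completed on the levels in $(n,l]$ by the Bourgain--Delbaen coordinates $c_\gamma^*$, which are generally nonzero. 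For the same reason your key identity $\Phi f(\overrightarrow{0}_{1,m},\overrightarrow{u},\overrightarrow{0}_{n+1,\infty})=f(x)$ fails for a general $f\in\mathcal{Z}^*$: the left-hand side equals $\lim_l f\bigl(i_l(\overrightarrow{0}_{1,m},\overrightarrow{u},\overrightarrow{0}_{n+1,l})\bigr)$, and $i_l(\overrightarrow{0}_{1,m},\overrightarrow{u},\overrightarrow{0}_{n+1,l})\neq x$ because zero-padding $\overrightarrow{u}$ is not the level-$l$ representation of $x$; that representation is $R_{[m+1,l]}x$, whose entries on $(n,l]$ are the values $c_\gamma^*(x)$. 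Taking $f=g+d_\gamma^*$ with $g\in\mathcal{Z}_{[1,n]}^*$ and $\gamma\in\Delta_{n+1}$ (so that $P_{[1,n]}^*f=g$) shows the discrepancy is exactly $c_\gamma^*(x)$, which is not small. Your closing fallback (choose $l$ with $\|R_{(l,\infty)}^*\Phi f\|_1<\e/2$ and pad $\overrightarrow{u}$ by zeros on $(n,l]$) does not repair this, since the error sits on $[m+1,l]$, not in the tail.

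The paper's proof handles this point correctly: using that the decomposition is shrinking, it picks $l\geq n$ and $g\in\mathcal{Z}_{[1,l]}^*$ with $\|f-g\|\leq\delta$, takes $\overrightarrow{z}=R_{[m+1,l]}x$ so that $x=i_l(\overrightarrow{0}_{1,m},\overrightarrow{z})$ exactly (Lemma \ref{dual1} applied at level $l$), obtains the exact identity $\Phi g(\overrightarrow{0}_{1,m},\overrightarrow{z},\overrightarrow{0}_{l+1})=g(x)\geq\e-\delta$, and then pays $\|\Phi\|\,\|f-g\|$ to pass back from $g$ to $f$. These two $\delta$-losses are precisely why the conclusion reads $\e/2$ rather than $\e$; your remark that the statement ``leaves slack'' and that one may take $l=n$ is therefore incorrect. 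Only your final duality step (deducing $\|R_{[m+1,l]}^*(\Phi f)\|_1\geq\e/2$ from the displayed inequality, since the test vector is supported in $[m+1,l]$ and has norm at most $1$) is sound as written.
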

\bp
Let $\delta>0$ such that $\delta<\frac{\e}{2(\|T\|+1)}$. Let $l\geq n$ and $g\in Z_{[1,l]}^*$ such that $\|f-g\|\leq \delta$. By Lemma
 \ref{dual1} we have that $x=i_l(\overrightarrow{0}_{1,m},\overrightarrow{z})$ where
 $\overrightarrow{z}\in (\sum_{k=m+1}^l\oplus (X_k\oplus\ell_{\infty}(\Delta_k))$. Observe that $\|\overrightarrow{z}\|_{\infty}\leq\|x\|\leq 1$
 and using Lemma \ref{dual2} we deduce that
 \[|\Phi g(\overrightarrow{0}_{1,m},\overrightarrow{z},\overrightarrow{0}_{l+1})|=|(i_l^*g)(\overrightarrow{0}_{1,m},\overrightarrow{z})|
 =|g(x)|\geq|f(x)|-\|f-g\|\geq \e-\delta.\] Thus, $|\Phi f(\overrightarrow{0}_{1,m},\overrightarrow{z},\overrightarrow{0}_{l+1})|\geq
 |\Phi g(\overrightarrow{0}_{1,m},\overrightarrow{z},\overrightarrow{0}_{l+1})|-\|\Phi\|\|f-g\|\geq \frac{\e}{2}$.
\ep

\begin{corollary}\label{blockdual}
Let $\e>0$, $(x_k)_{k\in\N}$ be a block sequence in $\mathcal{Z}$ and $f_k\in\mathcal{Z}^*$ such that $|f_k(x_k)|\geq \e$. Then there exists finite pairwise disjoint intervals of $\N$ such that $\|R_{I_k}^*(\Phi f_k)\|\geq\frac{\e}{2}$.
\end{corollary}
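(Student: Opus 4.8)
The plan is to bootstrap Lemma~\ref{cut} by iterating it along the block sequence while keeping the intervals disjoint. Recall that a block sequence $(x_k)_{k\in\N}$ with respect to the Schauder decomposition $(Z_n)_{n\in\N}$ means that there are integers $0=p_0<p_1<p_2<\cdots$ with $x_k\in\sum_{n=p_{k-1}+1}^{p_k}Z_n$; equivalently, by the remark following Lemma~\ref{dual1}, $x_k\in Y_{p_k}$ and $P_{[1,p_{k-1}]}(x_k)=0$. Normalising, we may assume $\|x_k\|\le 1$ for every $k$. Then for each $k$, apply Lemma~\ref{cut} with $m=p_{k-1}$, $n=p_k$, the given functional $f_k$, and $\e$ as in the hypothesis: there exist $l_k\ge p_k$ and $\overrightarrow{z}_k\in\sum_{j=p_{k-1}+1}^{l_k}\oplus(X_j\oplus\ell_\infty(\Delta_j))$ with $\|\overrightarrow{z}_k\|_\infty\le 1$ and $\|R_{[p_{k-1}+1,\,l_k]}^*(\Phi f_k)\|_1\ge\e/2$.

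The one point that needs care is disjointness of the intervals $I_k:=[p_{k-1}+1,\,l_k]$, since Lemma~\ref{cut} only guarantees $l_k\ge p_k$ and a priori $l_k$ could be much larger than $p_{k+1}$, so consecutive intervals might overlap. The fix is to choose the $l_k$'s greedily and not to apply Lemma~\ref{cut} to \emph{every} term of the block sequence but to a subsequence: having fixed $l_1$, pass to the first index $k_2$ with $p_{k_2-1}\ge l_1$, apply Lemma~\ref{cut} to $x_{k_2}$ (legitimate since $P_{[1,p_{k_2-1}]}x_{k_2}=0$ and $a\ fortiori\ P_{[1,l_1]}x_{k_2}=0$), obtaining $l_2\ge p_{k_2}>l_1$; then pass to the first index $k_3$ with $p_{k_3-1}\ge l_2$, and so on. This yields a strictly increasing sequence of indices $k_1<k_2<\cdots$ and intervals $I_j=[p_{k_j-1}+1,\,l_j]$ which are pairwise disjoint (indeed successive) because $l_j<p_{k_{j+1}-1}+1$ by construction, while still $\|R_{I_j}^*(\Phi f_{k_j})\|_1\ge\e/2$. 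Relabelling, we get the pairwise disjoint finite intervals asserted in the corollary.

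The main obstacle is precisely this disjointification: one must make sure that restricting $P_{[1,\cdot]}$ to a larger initial segment does not destroy the hypothesis needed by Lemma~\ref{cut}, which is immediate from $P_{[1,a]}\circ P_{[1,b]}=P_{[1,\min\{a,b\}]}$ and the fact that membership in $\sum_{n=k}^m Z_n$ only gets easier as $k$ decreases; and one must check that the intervals produced across different applications do not accidentally overlap, which the greedy choice of indices rules out. Everything else is a direct transcription of Lemma~\ref{cut} term by term, with the single cosmetic point that the corollary, as stated, ranges over the original sequence $(x_k)$, so if one does not want to pass to a subsequence one can instead simply observe that the conclusion only asserts the \emph{existence} of such intervals (one per $k$ is not claimed, only ``pairwise disjoint intervals''), and so producing one interval for each element of an infinite subsequence already suffices.
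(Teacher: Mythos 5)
Your proposal is correct and follows essentially the same route as the paper: apply Lemma~\ref{cut} to each term $x_k$ (using $m_k<n_k$ with $x_k\in Y_{n_k}$, $P_{[1,m_k]}x_k=0$) to produce intervals $(m_k,l_k]$ with $\|R^*_{(m_k,l_k]}(\Phi f_k)\|_1\ge\e/2$, and then pass to a subsequence to make the intervals pairwise disjoint --- your greedy selection is just an explicit version of the paper's one-line ``passing to a subsequence.'' The only (shared) caveat is that both arguments implicitly assume the block sequence is bounded by $1$ so that Lemma~\ref{cut} applies with the stated $\e$; this is how the corollary is used later, but it is not visible in its statement.
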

\bp
Since $(x_k)_{n\in\N}$ is block we can find $m_1<n_1<m_2<n_2<\ldots$ such that $x_k\in Y_{n_k}$ and  $P_{[1,m_k]}x_k=0$.
By Lemma \ref{cut} for every $k\in\N$ there exists $l_k\geq n_k$ such that setting $I_n=(m_k,l_n]$ we have that
\[\|R_{I_k}^*(\Phi f_k)\|\geq\frac{\e}{2}.\] Passing to a subsequence we can achieve that $I_k$ are pairwise disjoint and the proof is complete.
\ep

The following proposition states a property first noticed in the Schauder sum $(\sum_{k\in\N}\oplus(X_k^*\oplus\ell^1(\Delta_n))_1)_1$ but it will be presented in a more general manner.

\begin{proposition}\label{duality}
Let $(W_n)_{n\in\N}$ be a sequence of Banach spaces and let $W=(\sum_{n\in\N}\oplus W_n)_1$. Suppose that there exists a sequence $(\overrightarrow{w_k})_{k\in\N}$, $\e>0$ and a sequence $(I_k)_{n\in\N}$ of successive intervals of $\N$ such that $\|\overrightarrow{w_k}|_{\sum_{n\in I_k}W_n}\|\geq\e$ for every $k\in\N$. Then $(\overrightarrow{w_k})_{n\in\N}$ cannot be weakly null.
\end{proposition}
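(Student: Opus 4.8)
The plan is to argue by contradiction, exploiting the fact that the sum $W=(\sum_{n\in\N}\oplus W_n)_1$ is an $\ell_1$-sum, which is the crucial structural feature. Suppose $(\overrightarrow{w_k})_{k\in\N}$ is weakly null while $\|R_{I_k}(\overrightarrow{w_k})\|\geq\e$ for every $k$, where the $I_k$ are successive finite intervals of $\N$ and $R_{I_k}$ denotes the natural restriction onto $\sum_{n\in I_k}W_n$. For each $k$ pick, using Hahn--Banach, a functional $\overrightarrow{g_k}\in\big(\sum_{n\in I_k}\oplus W_n\big)^*$ with $\|\overrightarrow{g_k}\|\leq 1$ and $\overrightarrow{g_k}(R_{I_k}(\overrightarrow{w_k}))\geq\e$. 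Since the dual of an $\ell_1$-sum is the $\ell_\infty$-sum of the duals, each $\overrightarrow{g_k}$ extends (by zero outside $I_k$) to a functional $\overrightarrow{G_k}\in W^*$ with $\|\overrightarrow{G_k}\|\leq 1$, supported on $I_k$, and $\overrightarrow{G_k}(\overrightarrow{w_k})\geq\e$.

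The main step is then to amalgamate the $\overrightarrow{G_k}$ into a single functional that is not weakly continuous on the sequence. Because the intervals $I_k$ are pairwise disjoint and each $\overrightarrow{G_k}$ lives only on the coordinates in $I_k$, the ``glued'' object $\overrightarrow{G}$ defined by $\overrightarrow{G}(n)=\overrightarrow{g_k}(n)$ for $n\in I_k$ and $\overrightarrow{G}(n)=0$ for $n\notin\bigcup_k I_k$ again has $\|\overrightarrow{G}\|_\infty=\sup_k\|\overrightarrow{g_k}\|\leq 1$, so $\overrightarrow{G}\in W^*=(\sum_n\oplus W_n^*)_\infty$. The point of the $\ell_1$-sum hypothesis is precisely that no growth penalty is incurred when concatenating functionals on disjoint blocks. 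Now for each $k$, since $\overrightarrow{w_k}$ is only tested on the block $I_k$ by $\overrightarrow{g_k}$ and the remaining blocks contribute nothing to $\overrightarrow{G}(\overrightarrow{w_k})$ beyond what $\overrightarrow{G}$ sees there, we do not quite get $\overrightarrow{G}(\overrightarrow{w_k})\geq\e$ directly — $\overrightarrow{G}$ also acts on the coordinates of $\overrightarrow{w_k}$ outside $I_k$. To fix this cleanly, pass to a subsequence so that the portion of $\overrightarrow{w_k}$ outside $I_k$ has small norm on the blocks $I_j$, $j\neq k$; more precisely, using weak nullity and a standard gliding-hump / diagonal argument one may also assume $\sum_{j\neq k}\|R_{I_j}(\overrightarrow{w_k})\|_1$ is as small as we like, say $<\e/2$, for each $k$ (if this fails one extracts a further subsequence along which the tails stabilize and then subtracts them off). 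Then $\overrightarrow{G}(\overrightarrow{w_k})\geq\overrightarrow{g_k}(R_{I_k}\overrightarrow{w_k})-\sum_{j\neq k}\|R_{I_j}(\overrightarrow{w_k})\|_1>\e-\e/2=\e/2$ for all $k$, contradicting $\overrightarrow{G}(\overrightarrow{w_k})\to 0$.

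The hard part is the tail-control extraction in the previous paragraph: one must ensure that, after passing to a subsequence, the mass of $\overrightarrow{w_k}$ outside its own designated block $I_k$ does not interfere with the amalgamated functional. The standard device is that in an $\ell_1$-sum a weakly null sequence is, after a subsequence and an arbitrarily small perturbation, essentially a block sequence with respect to the coordinate decomposition (because weak nullity forces each fixed coordinate $\overrightarrow{w_k}(n)\to 0$, and the total norms are bounded); combining this with the given lower bounds $\|R_{I_k}(\overrightarrow{w_k})\|_1\ge\e$ lets one choose the $I_k$ to absorb essentially all the mass of $\overrightarrow{w_k}$. Once that reduction is in place, the $\ell_\infty$-norm-preserving concatenation of the block functionals delivers the contradiction immediately, and the proof is complete.
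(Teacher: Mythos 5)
Your overall architecture is the right one and matches the paper's: pick Hahn--Banach functionals $\overrightarrow{g_k}$ normed on the blocks $I_k$, glue them along the pairwise disjoint intervals into a single functional $\overrightarrow{G}$ of the $\ell_\infty$-sum dual with $\|\overrightarrow{G}\|\le 1$, and show $\overrightarrow{G}(\overrightarrow{w_k})$ stays bounded away from $0$. The gap is in the error-control step. You reduce everything to the claim that, after a subsequence and small perturbation, $\sum_{j\neq k}\|R_{I_j}(\overrightarrow{w_k})\|<\e/2$, and you justify this by asserting that in an $\ell_1$-sum a weakly null sequence is essentially a block sequence because ``weak nullity forces each fixed coordinate $\overrightarrow{w_k}(n)\to 0$.'' That is false: weak nullity forces each coordinate to tend to $0$ \emph{weakly} in $W_n$, not in norm (take $W_1=\ell_2$ and $\overrightarrow{w_k}$ the unit vector basis sitting in the first coordinate; nothing becomes block). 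Splitting your error term: the part $\sum_{j>k}\|R_{I_j}(\overrightarrow{w_k})\|$ can indeed be killed, since each $\overrightarrow{w_k}$ is an $\ell_1$-convergent sum and one may truncate it to an initial segment $[1,m_k]$ and then sparsify so that $\min I_{k_{j+1}}>m_{k_j}$ --- this needs only summability, not weak nullity. But the part $\sum_{j<k}\|R_{I_j}(\overrightarrow{w_k})\|$ cannot be made small in general: for a fixed finite interval $I_j$ the restrictions $R_{I_j}(\overrightarrow{w_k})$ are only weakly null in $(\sum_{n\in I_j}\oplus W_n)_1$, and there is no norm limit to ``subtract off.'' Since this claim, combined with the hypothesis $\|R_{I_k}(\overrightarrow{w_k})\|\ge\e$, immediately yields the contradiction, the hand-waved step carries the entire weight of the proof.

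The repair --- and this is exactly what the paper does --- is to abandon norm control of the off-block restrictions for $j<k$ and ask only for control of the \emph{scalars} $|\overrightarrow{g_j}(R_{I_j}\overrightarrow{w_k})|=|\overrightarrow{G_j}(\overrightarrow{w_k})|$. Each $\overrightarrow{G_j}$ is a single fixed functional, so weak nullity gives $\overrightarrow{G_j}(\overrightarrow{w_k})\to 0$ as $k\to\infty$. This forces you to interleave the two choices: pick $\overrightarrow{g_1}$, then pass to a tail of the sequence on which $|\overrightarrow{G_1}(\overrightarrow{w_k})|<\e/2^2$, pick $k_2$ and $\overrightarrow{g_2}$ there, pass to a further tail, and so on, so that at stage $j$ the finitely many previously chosen functionals contribute at most $\sum_{i<j}\e/2^{i+1}<\e/2$ to $\overrightarrow{G}(\overrightarrow{w_{k_j}})$, while the functionals with $i>j$ annihilate $\overrightarrow{w_{k_j}}$ by the truncation/sparsification above. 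With that inductive extraction in place of your norm estimate, the argument closes.
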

\bp
Suppose that $(\overrightarrow{w_k})_{k\in\N}$ is weakly null. We may also assume passing to subsequences and rearranging the numbering of $(I_k)_{k\in\N}$ that  $\overrightarrow{w_k}\in(\sum_{n=1}^{\max I_k}W_n)_1$ for every $k\in\N$. Let $k_1=1$ and choose $\overrightarrow{f_1}\in(\sum_{n\in I_1}\oplus W_n^*)_{\infty}$ such that $\|\overrightarrow{f_1}\|_{\infty}\leq 1$ and $\overrightarrow{f_1}(\overrightarrow{w_1})\geq\e$. Since $(\overrightarrow{w_k})_{k\in\N}$ is weakly null there exists $N_1\subset\N$ infinite such that $\min N_1=k_2>k_1$ and $\sum_{k\in N_1}|\overrightarrow{f_1}(\overrightarrow{w_k})| <\frac{\e}{2^2}$. Choose $\overrightarrow{f_2}\in (\sum_{n\in I_{k_2}}\oplus W_n^*)_1$ such that $\|x_2\|_{\infty}\leq 1$ and $\overrightarrow{f_2}(\overrightarrow{w_{k_2}}) \geq\e$. Following this manner inductively we find $1=k_1<k_2<\ldots$ and elements $(\overrightarrow{f_j})_{j\in\N}$ such that $\overrightarrow{f_j}\in (\sum_{n\in I_{k_j}}\oplus W_n^*)_{\infty}$, $\|\overrightarrow{f_j}\|_{\infty}\leq 1$, $\overrightarrow{f_j}(\overrightarrow{w_{k_j}}) >\e$ and $\sum_{i=j+1}^{\infty}\overrightarrow{f_j}(\overrightarrow{w_{k_i}}) <\frac{\e}{2^{j+1}}$. Setting $\overrightarrow{f}=\sum_{j=1}^{\infty}\overrightarrow{f_j}$ we have that $\|f\|_{\infty}\leq 1$ and since $\overrightarrow{w_{k_j}}\in(\sum_{n=1}^{\max I_{k_j}}W_n)_1$ the following is deduced
\begin{eqnarray*}
\overrightarrow{f}(\overrightarrow{w_{k_j}})&=&\sum_{i=1}^j\overrightarrow{f_i}(\overrightarrow{w_{k_j}}) \geq \overrightarrow{f_j}(\overrightarrow{w_{k_j}})-
\sum_{i\neq j}|\overrightarrow{f_i}(\overrightarrow{w_{k_j}})|\\ &\geq& \e-\sum_{i=1}^j\frac{\e}{2^{i+1}}\geq\frac{\e}{2},
\end{eqnarray*}
contradicting the fact that $(\overrightarrow{w_{k_j}})_{j\in\N}$ is weakly null.
\ep

\begin{definition}
For every element $z\in\mathcal{Z}$ we define the range of $z$, denoted by $ran(z)$ to be the
minimal interval $I\subset\N$ such that $x\in\sum_{n\in I}Z_n$. In a similar manner for $b^*\in\mathcal{Z}^*$ we say that $\ran b^*=(p,q)$ if $b^*(z)=P_{(p,q)}^*b^*(z)$ for every $z\in\mathcal{Z}$. For $z_1,z_2\in
\mathcal{Z}$, we will write $z_1<z_2$ whenever $\ran z_1<\ran z_2$ and
we will say that a sequence $(z_k)_{k\in I}$ is horizontally block in
$\mathcal{Z}$, if it is block with respect to the natural
decomposition $(Z_n)_{n\in\N}$, i.e. $z_k<z_{k+1}$ for every
$k\in I$. In the sequel, whenever we refer to a block sequence, we
mean that the sequence is horizontally block.
\end{definition}

The next result concerns $(\sum_n\oplus X_n)_{BD}$ where the separable spaces $X_n$ satisfy additional properties.

\begin{proposition}\label{blocking}
Let $\mathcal{Z}=(\sum_n\oplus X_n)_{BD}$ with a shrinking Schauder decomposition such that $\ell_1$ does not embed in
$X_n^*$ for every $n\in\N$ or $X_n$ admits the Schur property for every $n\in\N$. Let also $T:\mathcal{Z}\to\mathcal{Z}$ be a linear bounded operator.
Then, for every $(z_k)_{k\in\N}$ bounded block in $\mathcal{Z}$ and $q\in\N$ there exists a subsequence $(z_j)_{j\in\N}$ such that $\|P_{[1,q]}T(z_j)\|\to 0$.
\end{proposition}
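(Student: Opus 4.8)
The plan is to reduce the statement to an application of Proposition \ref{duality} via the dual machinery developed in Lemmas \ref{cut}, \ref{shrinking} and Corollary \ref{blockdual}. First I would fix $q\in\N$ and the bounded block sequence $(z_k)_{k\in\N}$, and argue by contradiction: suppose that no subsequence of $(\|P_{[1,q]}T(z_k)\|)_{k}$ tends to $0$. Passing to a subsequence we may then assume there is $\e>0$ with $\|P_{[1,q]}T(z_k)\|\geq\e$ for all $k$. Since $P_{[1,q]}$ has finite-dimensional-type structure only through $X_1,\dots,X_q$ — more precisely $Y_q\simeq^C\sum_{k=1}^q\oplus(X_k\oplus\ell_\infty(\Delta_k))$, and the $\ell_\infty(\Delta_k)$ summands are genuinely finite-dimensional — the norming functionals of $P_{[1,q]}T(z_k)$ can be taken from a subset of $(\sum_{k=1}^q(X_k^*\oplus\ell_1(\Delta_k))_1)_1$. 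This is where the hypothesis on the $X_n$ enters: on each $X_k^*$ with $k\leq q$, either $\ell_1$ does not embed (so bounded sequences in $X_k^*$ have weakly convergent subsequences, by the standard $\ell_1$-dichotomy / Rosenthal-type reasoning, at least after passing to a subsequence we get weak-Cauchy and then translate to weakly null differences) or $X_k$ is Schur (so weak and norm convergence of the preduals coincide). Either way, after finitely many diagonalizations over $k=1,\dots,q$, I can select norming functionals $\overrightarrow{f_k}\in(\sum_{j=1}^q(X_j^*\oplus\ell_1(\Delta_j))_1)_1$ for $P_{[1,q]}T(z_k)$ that converge coordinatewise, and by subtracting a tail I can arrange that the \emph{differences} $\overrightarrow{g_k}=\overrightarrow{f_{2k}}-\overrightarrow{f_{2k-1}}$ still norm $P_{[1,q]}T(z_{2k})-$something up to $\e/2$, i.e. that there is a weakly null (in the $\ell_1$-sum) sequence of functionals pairing with the relevant vectors by at least a fixed positive constant.

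The second half of the argument transfers this to the \emph{domain} side. Writing $b_k^*=(P_{[1,q]}T)^*(\text{norming functional of }P_{[1,q]}Tz_k)\in\mathcal{Z}^*$, we have $b_k^*(z_k)=\|P_{[1,q]}Tz_k\|\geq\e$ and $\|b_k^*\|\leq\|T\|$. Now apply Corollary \ref{blockdual} to the block sequence $(z_k)$ and the functionals $(b_k^*)$: there are finite pairwise disjoint intervals $I_k\subset\N$ with $\|R_{I_k}^*(\Phi b_k^*)\|_1\geq\e/2$. Since $\Phi$ is an isomorphism onto $W:=(\sum_{n\in\N}\oplus(X_n^*\oplus\ell_1(\Delta_n))_1)_1$, the vectors $\overrightarrow{w_k}:=\Phi b_k^*$ live in $W$, are bounded, have restrictions to successive (after a further subsequence, successive) intervals $I_k$ of norm $\geq\e/2$, so Proposition \ref{duality} says $(\overrightarrow{w_k})_k$ cannot be weakly null in $W$, hence $(b_k^*)_k$ cannot be weakly null in $\mathcal{Z}^*$. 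On the other hand, $b_k^* = (P_{[1,q]}T)^* h_k$ where $h_k$ are norming functionals supported (as functionals on $\mathcal{Z}$) on the finite-type block $Y_q$; using the $\ell_1$-nonembedding-or-Schur hypothesis exactly on the coordinates $\le q$ of the $\Phi$-image, one shows the $h_k$ (after a subsequence) are weak-Cauchy, so the $b_k^*$ are weak-Cauchy, and then replacing $(z_k)$ by differences $z_{2k}-z_{2k-1}$ (still bounded, still block) makes the corresponding functionals weakly null — the desired contradiction.

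The main obstacle, and the step I would spend the most care on, is the precise mechanism by which the hypothesis ``$\ell_1\not\hookrightarrow X_n^*$ or $X_n$ Schur'' forces the relevant functionals to be weak-Cauchy: one must observe that $P_{[1,q]}^*\mathcal{Z}^*$ is isomorphic, via $i_q^*$ (Lemma \ref{shrinking}), to $(\sum_{k=1}^q(X_k^*\oplus\ell_1(\Delta_k))_1)_1$, a \emph{finite} $\ell_1$-sum whose summands are $X_k^*$ (plus finite-dimensional pieces); a bounded sequence in such a space has a weak-Cauchy subsequence precisely when none of the $X_k^*$ contains $\ell_1$ isomorphically, which is Rosenthal's $\ell_1$ theorem, and the Schur alternative handles the remaining case because then bounded sequences in $X_k^*$ (being $\ell_1$-like only if $X_k^{**}\supset\ell_1$... ) — here I would instead use Schur directly on $X_k$ to get that the \emph{image} vectors $P_{[1,q]}Tz_k\in Y_q$ have norm-convergent subsequences, trivialising the claim. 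I expect the bookkeeping of which alternative is invoked on which coordinate, and the passage from weak-Cauchy functionals to weakly null ones by taking differences of the $z_k$'s, to be the only genuinely delicate points; the rest is a direct chaining of the already-established Lemma \ref{cut}, Corollary \ref{blockdual} and Proposition \ref{duality}.
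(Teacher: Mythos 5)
Your proposal is correct and follows essentially the same route as the paper: the Schur alternative is handled via weak nullness of $(P_{[1,q]}Tz_k)$ together with the Schur property of the finite sum $Y_q$, and the other alternative via Rosenthal's $\ell_1$ theorem, differences of weak-Cauchy functionals paired against the weakly null $(z_k)$, and the combination of Corollary \ref{blockdual} with Proposition \ref{duality} (the paper applies Rosenthal to $T^*P_{[1,q]}^*w_k^*$ in $\mathcal{Z}^*$ and localizes the resulting $\ell_1$-copy into some $X_{n_0}^*$ at the end, whereas you apply it up front in $Z_{[1,q]}^*$; these are the same argument with the quantifiers shuffled). One bookkeeping slip to fix: the non-weak-nullness conclusion from Proposition \ref{duality} must be drawn for the \emph{differences} $b^*_{k_{2i}}-b^*_{k_{2i-1}}$ (which are weakly null by weak-Cauchyness, and which still pair with $z_{k_{2i}}$ by at least $\e/2$ because $(z_k)$ is weakly null), not for $(b_k^*)$ itself --- ``not weakly null'' does not contradict ``weak-Cauchy'' --- and correspondingly it is the functionals, not the $z_k$'s, that should be replaced by differences.
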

\bp We treat first the case that $X_n$ has the Schur property for every $n\in\N$. Let $(z_k)_{k\in\N}$ be a bounded block in $\mathcal{Z}$.
Since $(Tz_k)_{n\in\N}$ is weakly null then for every $q\in\N$ $(P_{[1,q]}Tz_k)_{n\in\N}$ is weakly null. For each $i$ using a sliding hump argument and the Schur property of $X_i$ we deduce that there exists a subsequence $(x_j)_j$ such that $\|P_{\{i\}}Tx_j\|\to 0$ and the result follows.

In the case that $\ell_1$ does not embed in $X_n^*$ for every $n\in\N$ we assume on the contrary that there exists $q\in\N$ and $\delta>0$ such that
$\|P_{[1,q]}T(z_k)\|\geq\delta$ for every $k\in\N$. Applying Hahn-Banach we find $w_k^*\in \mathcal{Z}^*$ such that $|w_k^*(P_{[1,q]}T(z_k))|\geq\delta$.
Using the adjoint operator $T^*\circ P_{[1,q]}^*:\mathcal{Z}^*\to \mathcal{Z}^*$ and setting $z_k^*=T^*\circ P_{[1,q]}^*(w_k^*)$ it is clear that $|z_k^*(z_k)|\geq\delta$. We claim that there exists a subsequence of $(z_{n_k}^*)_{k\in\N}$ that is equivalent to the unit vector basis of $\ell_1$.

Indeed, if not by Rosenthal's $\ell_1$ theorem we may assume, passing to a subsequence, that $(z_k^*)_{k\in\N}$ is weakly Cauchy. Then, since $(z_k)_{k\in\N}$ is weakly null we can choose inductively $k_1<k_2<\ldots$ such that setting $\tilde{z}_i^*=z_{k_{2i}}^*-z_{k_{2i-1}}^*$ we have that $|\tilde{z}_i^*(z_{k_{2i}})|\geq\frac{\delta}{2}$. It is clear that $(\tilde{z}_i^*)_i$ is weakly null and applying Corollary \ref{blockdual} we find a sequence $(I_i)_{i\in\N}$ of successive intervals of $\N$ such that $\|R_{I_i}^*(\Phi(\tilde{z}_i^*))\|_1\geq \frac{\delta}{4}$. Proposition \ref{duality} implies that $\Phi(\tilde{z}_i^*)_{i}$ is not weakly null yielding a contradiction.

It follows that $(P_{[1,q]}^*(w_{n_k}^*))_k$ is equivalent to the unit vector basis of
$\ell_1$ and thus $\ell_1\in Z_{[1,q]}^*$. Since $Z_{[1,q]}^*\simeq(\sum_{n=1}^{q}(X_n^*\oplus\ell_1(\Delta_n))_1)_1$ there exists $n_0\in [1,q]$ such that $\ell_1$ is isomorphically embedded in $X_{n_0}^*$ yielding a contradiction.
\ep

\section{The Description of the Bourgain-Delbaen sets}
Now we shall give a basic description of how the BD- sets $\Delta_n$ will be determined as well as the functionals $c_{\gamma}^*:\sum_{k=1}^{n-1}\oplus (X_k\oplus\ell_{\infty}(\Delta_k))\to\R$ for $\gamma\in\Delta_n$.
We fix a sequence of separable Banach spaces $(X_n,\|\cdot\|_n)$ and two natural
numbers $a,b$ such that $0<a\leq 1$ and $0<b<\frac{1}{2}$. We determine the sets $\Delta_n$ by induction satisfying:
\begin{enumerate}
\item[(i)] Each $\Delta_n$ is finite and is the union of two pairwise disjoint sets $\Delta_n=\Delta_n^0\cup\Delta_n^1$.
\item[(ii)] Every element $\gamma$
in $\Delta_{n}^0$ is represented as $\gamma=(n,a,f,p,0)$ with $p\leq n$ for
$f$ functional in a finite subset of the unit ball $B_{X_p^*}$, while if $\gamma\in\Delta_{n}^1$, $\gamma$ is
represented as $\gamma=(n,b,\eta,p,\xi)$, where $\eta\in \Delta_p^1$
for $p<n-1$ and $\xi\in\Delta_k$ with $p\leq k\leq n-1$.
\end{enumerate}

For $\gamma\in\Delta_n$ we denote by $e_{\gamma}^*$ the usual vector element of $\ell_1(\Delta_n)$. We identify $X_n^*$ with $(X_n^*\oplus\{0\})_1$ and $\ell_1(\Delta_n)$ with $(\{0\}\oplus\ell_1(\Delta_n))_1$ via identifications $X_n^*\ni x^*\mapsto \overrightarrow{x}^*=(x^*,0)$ and respectively $e_{\gamma}^*\mapsto\overrightarrow{e_{\gamma}}^*=(0,e_{\gamma}^*)$. Note that $\|x^*\|_{X_n^*}=\|\overrightarrow{x}^*\|_1$ and $\|\overrightarrow{e_{\gamma}}^*\|_1=\|e_{\gamma}^*\|_1$.

For $m\leq n$ $\overrightarrow{f}\in(X_m^*\oplus\ell_1(\Delta_m))$ and $\overrightarrow{y}\in\sum_{k=1}^n\oplus(X_k\oplus\ell_{\infty}(\Delta_k))$ we denote by $\overrightarrow{f}(\overrightarrow{y})$ the natural action $\overrightarrow{f}(\overrightarrow{y}(m))$.

Let $\overrightarrow{x}\in\sum_{k=1}^{n}\oplus(X_k\oplus\ell_{\infty}(\Delta_k))$ and assume that for every $k\leq n$ the sets $\Delta_k$ have been determined as well as the operators $(i_{m,l})_{m\leq l\leq n}$. For $\gamma=(n+1,x^*,p,0)\in\Delta_{n+1}^0$ we define
    \[c_{\gamma}^*(\overrightarrow{x})=a \overrightarrow{x}^*(\overrightarrow{x}),\]
while for $\gamma=(n+1,b,\eta,p,\xi)\in\Delta_{n+1}^1$ we define \[c_{\gamma}^*(\overrightarrow{x})=a \overrightarrow{e_{\eta}}^*(\overrightarrow{x}) +b\overrightarrow{e_{\xi}}^*(\overrightarrow{x}-(i_{p,n-1} R_{[1,p]}\overrightarrow{x})).\]

As described above every element $\gamma\Delta_n$ is represented as $\gamma=(n,a,f,p,0)$ or $\gamma=(n,b,\eta,p,\xi)$. Each coordinate represents a difference characteristic that defines uniquely $\gamma$ and we shall use some of the them to define useful concepts. In particular we define the first coordinate as the rank of $\gamma$, i.e $\rank(\gamma)=n$ whenever $\gamma\in\Delta_n$. We also define the second coordinate as the weight of $\gamma$, $w(\gamma)=a$ or $b$.

\subsection{Argyros Haydon BD-sets}
We will now use the Argyros-Haydon version of "Bourgain Delbaen" sets as they where presented in \cite{AH}.
We shall follow the above basic scheme in a more general manner for the fixed sequence of separable Banach spaces $(X_n,\|\cdot\|_n)_{n\in\N}$.

We choose a pair of strictly increasing sequences $(m_j)_{j\in\N}$, $(n_j)_{j\in\N}$ of natural numbers such  $n_j\geq m_j^2$ satisfying the requirements of Assumption 2.3 in \cite{AH}.

For every $I=[m,n]$ finite interval of $\N$ we identify $(\sum_{k\in I}\oplus X_k^*)_1$ with $(\sum_{k\in I}\oplus (X_k^*\oplus\{0\})_1)_1$ via identification \[\overrightarrow{f}=(x_m^*,\ldots,x_n^*)\mapsto \overrightarrow{f}=(\overrightarrow{x}_m^*,\ldots,\overrightarrow{x}_n^*)\] Similarly we identify $(\sum_{k\in I}\oplus\ell_1(\Delta_k))_1$ with $(\sum_{k\in I}\oplus(\{0\}\oplus\ell_1(\Delta_k))_1)_1$, i.e. for $\overrightarrow{b}^*=(b_m^*,\ldots,b_n^*)$ such that $b_k^*=\sum_{\gamma\in\Delta_k}a_{\gamma}e_{\gamma}^*$ we identify $\overrightarrow{b}^*$ with $\overrightarrow{b}^*=(\overrightarrow{b}_m^*,\ldots,\overrightarrow{b}_n^*)$ where $\overrightarrow{b^*_k}=\sum_{\gamma\in\Delta_k}a_{\gamma}\overrightarrow{e^*_{\gamma}}$.

We choose $F_n$ 1-norming countable and symmetric subset of the unit ball of $X_n^*$ (recall that $X_n$ is separable). We denote by $F_n^m$ the first m-elements of $F_n$ and for every $p<n$ we define a subset $K_{n,p}$ of $(\sum_{k=p+1}^n\oplus X_k^*)_1$ defined as \[K_{n,p}=\{\overrightarrow{f}=(x^*_{p+1},\ldots,x_m^*):\ x_k^*\in F_k^n\cup\{0\},\ \sum_k\|x_k^*\|_{X_k^*}\leq 1\}.\]

As in \cite{AH} (Section 4) we choose a strictly increasing sequence of natural numbers $(N_n)_{n\in\N}$, we set $G_n=\{q\in\Q:\ q=\frac{k}{l}\ \ l \text{divides}\ N_n!\}$ and define a subset $B_{n,p}$ of $(\sum_{k=p+1}^n\oplus\ell_1(\Delta_k))_1$ as
\[B_{n,p}=\{\overrightarrow{b^*}=(b_1^*,\ldots,b_k^*):\ \ b_k^*=\sum_{\gamma\in\Delta_k}a_{\gamma}e_{\gamma}^*,\ a_{\gamma}\in G_n \sum_{\gamma\in\Gamma_n\setminus\Gamma_p}|a_{\gamma}|\leq 1 \}\]

Along with $\Delta_n$ we will also recursively define an injection $\sigma:\Delta_n\to\N$
such that $\min\{\sigma(\gamma):\gamma\in\Delta_n\}\geq\max\{\sigma(\gamma):\gamma\in\Delta_{n-1}\}$, hence $\sigma(\gamma)>n$ for all $\gamma\in\Delta_n$. Since $\Delta_n$ is finite this is possible.

Let $\Delta_1^0=\emptyset$, $\Delta_1=\Delta_1^1=\{1\}$ and $\sigma(1)=2$.
Assume that $\Delta_k$ and $\sigma:\Delta_k\to\R$ have been defined for
every $k\leq n$  we define $\Delta_{n+1}=\Delta_{n+1}^0\cup\Delta_{n+1}^1$ such that
\[\Delta_{n+1}^0=\{(n+1,p,\overrightarrow{f},0):\ \overrightarrow{f}\in K_{n,p}  \} \]

\begin{align*}
\Delta_{n+1}^1 &=\left\{(n+1,m_{2j},p,\overrightarrow{b^*}): \ \ 2j\leq n+1,\ \ p<n \ \ ,\overrightarrow{b^*}\in B_{n,p}\right\}\\ &\cup
\left\{(n+1,\eta,m_{2j},p,,\overrightarrow{b^*}): \ 2j\leq n+1,\ \ p<n,\ \eta\in \Delta_p^1,\
w(\eta) = m_{2j},\right.\\
&\left.\qquad\qquad\qquad\qquad\qquad\qquad\ ,\overrightarrow{b^*}\in B_{n,p}\right\}\\
&\cup\left\{(n+1,m_{2j-1},\overrightarrow{e^*_{\eta}}): \ \ 2j\leq n+2, \ \ \eta\in \Delta_{k},\ \ k\leq n,\right.\\
&\left.\qquad\qquad\qquad\qquad\qquad\qquad\qquad\qquad\qquad\ w(\eta)= m_{4i-2}>m_{2j-1}^2 \right\}\\
&\cup\left\{(n+1,\eta,m_{2j-1},p,\overrightarrow{e^*_{\xi}}): \ \ 2j\leq n+2,\ \ \eta\in \Delta_p^1,\ \ w(\eta) =
m_{2j-1},\ ,\right.\\
&\left.\qquad\qquad\qquad\qquad\ \ \xi\in
\Delta_k, \ p<k\leq n,\ \ w(\xi)=
m_{4\sigma(\eta)}\right\}.
\end{align*}

We define $\sigma:\Delta_{n+1}\to\N$ satisfying that $\min\{\sigma(\gamma):\gamma\in\Delta_{n+1}\}\geq\max\{\sigma(\gamma):\gamma\in\Delta_{n}\}$.
The functionals $c_{\gamma}^*$ for $\gamma\in\Delta_n$ are defined as in the above section but with some differences concerning the parameters $a,b$. In our case $a$ will always be equal to $1$ and thus $w(\gamma)=1$ for every $\gamma\in\Delta_n^0$, $n\in\N$, while $b=m_j$ whenever $\gamma\in\Delta_n^1$ and $w(\gamma)=m_j$.

\begin{remark}\label{norming}
1. We note that for every $p\leq n$ and $\overrightarrow{f}\in K_{p,n}$ we have that $\|\overrightarrow{f}\|_1\leq 1$. Indeed, let $\overrightarrow{f}=(x^*_{p+1},\ldots,x^*_n)$ such that $x_k^*\in F_{k}^m\cup\{0_{X_k^*}\}$ and $\sum_k\|x_k^*\|_{X_k^*}\leq 1$. Observe that $\|\overrightarrow{x_k^*}\|_1=\|x_k^*\|_{X_k^*}$ and since $\|\overrightarrow{f}\|_1=\sum_k\|\overrightarrow{x_k^*}\|_1$ the result follows. Similarly $\|\overrightarrow{b}^*\|_1\leq 1$ for every $\overrightarrow{b^*}\in B_{p,n}$.

2. Let $\overrightarrow{b}^*=(b^*_{m+1},\ldots,b_n^*)\in(\sum_{k=m+1}^n\oplus\ell_1(\Delta_k))_1$ such that $\|b^*\|_1\leq 1$ and $b_k^*=\sum_{\gamma\in\Delta_k}r_{\gamma}e_{\gamma}^*$ where $r_{\gamma}$ is rational for every $\gamma$. Then, there exists $q>n$ such that $\overrightarrow{b}^*\in B_{q,m}$.
Indeed, since the sequence $(N_j)_j$ is strictly increasing we have that there exists $q\geq n$ such that the maximum of all denominators of $r_{\gamma}$ is less than $N_q$ for every $\gamma\in\cup_{k=m+1}^n\Delta_k$. By the definition of $B_{q,m}$ the result follows.
\end{remark}

\section{Argyros Haydon $\mathcal{L}^{\infty}$ sums of Banach spaces $(\sum_n\oplus X_n)_{AH}$}
For a fixed sequence $(X_n,\|\cdot\|_n)_{n\in\N}$ of separable Banach spaces we denote by $(\sum\oplus X_n)_{AH}$ the BD $\mathcal{L}^{\infty}$ sum of $(X_n,\|\cdot\|_n)$ that is constructed following the steps stated in section 4.1.
In order to ensure that the spaces $(\sum\oplus X_n)_{AH}$ are well defined we need to check the boundness principle of the operators $i_{n,m}:\sum_{k=1}^n\oplus
(X_k\oplus\ell_{\infty}(\Delta_k)) \rightarrow\sum_{k=1}^m\oplus
(X_k\oplus\ell_{\infty}(\Delta_k))$.

\begin{proposition}\label{bound_prince}
$\|i_{n,m}\|\leq 2$ for every $n,m\in\N$ with $n\leq m$.
\end{proposition}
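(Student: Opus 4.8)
The plan is to bound $\|i_{n,m}\|$ by induction on $m$ for fixed $n$, exploiting the recursive formula $i_{n,m+1} = i_{m,m+1}\circ i_{n,m}$. Since $i_{n,m}(\overrightarrow x) = (\overrightarrow x, \overrightarrow x_{n+1}, \ldots, \overrightarrow x_m)$ where each new coordinate $\overrightarrow x_{k} \in X_k \oplus \ell_\infty(\Delta_k)$ has zero $X_k$-component and $\ell_\infty(\Delta_k)$-component $(c_\gamma^*(\cdot))_{\gamma \in \Delta_k}$, controlling $\|i_{n,m}\|$ amounts to controlling $\sup_{k \le m}\sup_{\gamma \in \Delta_k}|c_\gamma^*(\overrightarrow x)|$ in terms of $\|\overrightarrow x\|_\infty$. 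So the real content is the estimate $|c_\gamma^*(\overrightarrow x)| \le 2\|\overrightarrow x\|_\infty$ for every $\gamma$, proved by induction on $\rank(\gamma)$ simultaneously with the operator bound.

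First I would treat $\gamma \in \Delta_{n+1}^0$, i.e. $\gamma = (n+1, p, \overrightarrow f, 0)$ with $\overrightarrow f \in K_{n,p}$ and $a = 1$: here $c_\gamma^*(\overrightarrow x) = \overrightarrow f(\overrightarrow x)$, and since $\|\overrightarrow f\|_1 \le 1$ by Remark~\ref{norming}(1), we get $|c_\gamma^*(\overrightarrow x)| \le \|\overrightarrow x\|_\infty$ immediately. The substantive case is $\gamma \in \Delta_{n+1}^1$, where (in the form relevant here)
\[
c_\gamma^*(\overrightarrow x) = a\,\overrightarrow e_\eta^*(\overrightarrow x) + b\,\overrightarrow e_\xi^*\bigl(\overrightarrow x - i_{p,n-1}R_{[1,p]}\overrightarrow x\bigr),
\]
with $a = 1$ and $b = m_j < \tfrac12$. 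The first term is a single coordinate of $\overrightarrow x$, hence bounded by $\|\overrightarrow x\|_\infty$. For the second term, $\overrightarrow e_\xi^*$ reads off one coordinate of the vector $\overrightarrow x - i_{p,n-1}R_{[1,p]}\overrightarrow x$; I would bound $\|i_{p,n-1}R_{[1,p]}\overrightarrow x\|_\infty$ using the inductive operator bound $\|i_{p,n-1}\| \le 2$ (available since $n-1 < m+1$), giving $\|\overrightarrow x - i_{p,n-1}R_{[1,p]}\overrightarrow x\|_\infty \le 3\|\overrightarrow x\|_\infty$, so the second term is at most $b \cdot 3\|\overrightarrow x\|_\infty < \tfrac32\|\overrightarrow x\|_\infty$. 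That would only yield $|c_\gamma^*(\overrightarrow x)| \le \tfrac52\|\overrightarrow x\|_\infty$, which is too weak.

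The main obstacle is therefore sharpening this: one must observe that the coordinate $\xi$ at which $\overrightarrow e_\xi^*$ evaluates satisfies $\rank(\xi) \ge p$, so on that coordinate $i_{p,n-1}R_{[1,p]}\overrightarrow x$ already agrees with $\overrightarrow x$ or differs from it in a controlled way — more precisely, writing $i_{n,m+1}\overrightarrow x = i_{m,m+1}\cdots$, the difference $\overrightarrow x - i_{p,n-1}R_{[1,p]}\overrightarrow x$ has the form $(0_{1,p}, \overrightarrow u)$ with $\overrightarrow u$ lying in the range $[p+1, n-1]$ and satisfying $\|\overrightarrow u\|_\infty \le (1 + \|i_{p,n-1}\|)\|\overrightarrow x\|_\infty$; but crucially $\overrightarrow e_\xi^*$ only sees this difference, and one re-runs the argument so that the $b$-factor multiplies a quantity bounded by a constant that, combined with $b < \tfrac12$, collapses to $\le \tfrac12 \cdot 2\|\overrightarrow x\|_\infty$. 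Concretely I would set up the induction hypothesis as: $\|i_{l,k}\| \le 2$ for all $l \le k \le m$, deduce $|c_\gamma^*(\overrightarrow x)| \le 2\|\overrightarrow x\|_\infty$ for all $\gamma$ with $\rank(\gamma) \le m+1$ (splitting off the first term, which costs $1 \cdot \|\overrightarrow x\|_\infty$, and bounding the second by $b(1 + 2)\|\overrightarrow x\|_\infty$ — checking that with the actual AH constraint $b \le m_1^{-1}$ one indeed gets the total $\le 2$), and then conclude $\|i_{n,m+1}\overrightarrow x\|_\infty = \max\{\|\overrightarrow x\|_\infty, \sup_\gamma |c_\gamma^*(\overrightarrow x)|\} \le 2\|\overrightarrow x\|_\infty$, closing the induction. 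The bookkeeping on which $b = m_j$ actually appears, and verifying the numerical inequality $1 + 3b \le 2$ holds for the admissible weights, is the one place where the specific choice of the sequences $(m_j)$ from Assumption 2.3 of \cite{AH} must be invoked.
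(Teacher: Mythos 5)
Your overall strategy is the paper's: induction on $m$ via $i_{n,m+1}=i_{m,m+1}\circ i_{n,m}$, reduction to the estimate $|c_\gamma^*(i_{n,m}\overrightarrow z)|\le 2\|\overrightarrow z\|_\infty$ for $\gamma\in\Delta_{m+1}$, the case $\gamma\in\Delta_{m+1}^0$ disposed of by $\|\overrightarrow f\|_1\le 1$, and the weighted case by splitting off the $\overrightarrow e_\eta^*$-term and absorbing the rest into the factor $1/m_j$. But there is a genuine gap in the weighted case. You assert that ``the first term is a single coordinate of $\overrightarrow x$, hence bounded by $\|\overrightarrow x\|_\infty$.'' That is only true when $p=\rank(\eta)<n$, i.e.\ when the $\eta$-coordinate of the extended vector $i_{n,m}\overrightarrow z$ is an honest coordinate of $\overrightarrow z$. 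When $p\ge n$ --- which certainly occurs, since the same $\gamma\in\Delta_{m+1}$ must be handled for every $n\le m$ --- that coordinate equals a $c_\eta^*$-value, which the induction bounds only by $2\|\overrightarrow z\|_\infty$; adding any positive contribution from the second term then exceeds $2\|\overrightarrow z\|_\infty$ and the induction does not close. What saves the proof in that case is the identity $i_{n,m}\overrightarrow z=i_{p,m}R_{[1,p]}(i_{n,m}\overrightarrow z)$ for $p\ge n$ (the extension from level $n$ to level $m$ factors through level $p$), which makes the second term vanish identically, leaving only the first term, $\le 2$ by the inductive hypothesis. This dichotomy $p\ge n$ versus $p<n$ is the pivot of the paper's argument and is absent from yours; your ``main obstacle'' paragraph about $\rank(\xi)\ge p$ and re-running the argument does not supply it.

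Two smaller remarks. First, in the case $p<n$ no sharpening of the kind you describe is needed: the coefficient of the difference term is $1/m_j\le 1/m_1$, and since Assumption~2.3 of \cite{AH} gives $m_1\ge 4$, the crude estimate $|\overrightarrow e_\eta^*(\overrightarrow z)|+\frac{1}{m_j}\bigl(\|i_{n,m}\overrightarrow z\|_\infty+\|i_{p,m}R_{[1,p]}\overrightarrow z\|_\infty\bigr)\le 1+\frac{4}{m_1}\le 2$ already closes the induction; your detour through $b<\tfrac12$ and back is a red herring. Second, your bound $3\|\overrightarrow x\|_\infty$ for the norm of the difference $i_{n,m}\overrightarrow z-i_{p,m}R_{[1,p]}\overrightarrow z$ is slightly too optimistic, since on coordinates above level $n$ the minuend is only controlled by $2\|\overrightarrow z\|_\infty$ rather than $\|\overrightarrow z\|_\infty$; the correct crude constant is $4$, which still works because $1+4/m_1\le 2$.
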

\bp We will prove it using induction on $m$ and for all $n\leq m$. For $m=1$, it
is trivial. Now assume that for some $m\in\N$ and some $n\leq m$ we have that for every $k\leq n$
and $l\leq m$, $\|i_{k,l}\|\leq 2$. In order to prove that
$\|i_{n,m+1}\|\leq 2$ it is enough to show that for $\overrightarrow{z}\in
\sum_{k=1}^{n}(X_k\oplus\ell^{\infty}(\Delta_k))$ with $\|\overrightarrow{z}\|\leq 1$, $|c_{\gamma}^*(i_{n,m}\overrightarrow{z})|\leq 2$
for every $\gamma\in\Delta_{m+1}$.

Let $\gamma\in\Delta_{m+1}^0$ of the form $\gamma=(m+1,\overrightarrow{f},p,0)$ where $\overrightarrow{f}\in K_{p,m}$ and by Remark \ref{norming} $\|\overrightarrow{f}\|_1\leq 1$. Thus,
$|c_{\gamma}^*(i_{n,m}\overrightarrow{z})|=|\overrightarrow{f}(\overrightarrow{z})|\leq 1$.

Now, let
$\gamma\in\Delta_{m+1}^1$ of the form $\gamma=(m+1,\eta,m_j,p,\overrightarrow{b^*})$ where $\overrightarrow{b^*}\in\ B_{p,m}$. By the definition of $c_{\gamma}^*$ we obtain that
\[c_{\gamma}^*(i_{n,m}\overrightarrow{z})=\overrightarrow{e_{\eta}}^*(i_{n,m}\overrightarrow{z})+\frac{1}{m_j}\overrightarrow{b^*}[i_{n,m}\overrightarrow{z}-i_pR_{[1,p]}
(i_{n,m}\overrightarrow{z})].\]
Observe that if $p\geq n$, then using the definition of $i_{n,m}$ we have that
$i_{n,m}\overrightarrow{z}=i_pR_{[1,p]}(i_{n,m}\overrightarrow{z})$ and by the inductive hypothesis we are done. In case that $p<n$ we have that $\overrightarrow{e_{\eta}}^*(i_{n,m}\overrightarrow{z})=\overrightarrow{e_{\eta}}^*(\overrightarrow{z})$ and
\[\overrightarrow{b^*}[i_{n,m}\overrightarrow{z}-i_pR_{[1,p]}(i_{n,m}\overrightarrow{z})]=
\overrightarrow{b^*}[i_{n,m}\overrightarrow{z}-i_{p,m}(R_{[1,p]}\overrightarrow{z})].\] Again by Remark \ref{norming} we have that $\|\overrightarrow{b^*}\|_1\leq 1$ and using our inductive
assumption we conclude that
\[c_{\gamma}^*(i_{n,m}\overrightarrow{z})\leq \|\overrightarrow{z}\|+\frac{1}{m_j}(\|i_{n,m}\overrightarrow{z}\|+\|i_{p,m}\overrightarrow{z}\|)\leq 2.\]
\ep

We deduce that $\|i_n\|\leq 2$ for every $n\in\N$ and by Remark \ref{bp} setting $\mathcal{Z}=\overline{\cup_nY_n}$ where $Y_n=i_n[\sum_{k=1}^n\oplus(X_k\oplus\ell_{\infty}(\Delta_k))]$ we have that $\mathcal{Z}=(\sum_n\oplus X_n)_{AH}$.

\begin{notation}\label{restrict}
As noted in section 2 we restrict the operators $R_{[1,n]}:\mathcal{Z}\to\sum_{k=1}^n\oplus(X_k\oplus\ell_{\infty}(\Delta_k))$. For $\gamma\in\Delta_{n+1}$ we extend $c_{\gamma}^*:\mathcal{Z}\to\R$ as $c_{\gamma}^*(z)=c_{\gamma}^*(R_{[1,n]}z)$. In the same manner we can extend every $\overrightarrow{f}:\sum_{k=1}^n\oplus(X_k\oplus\ell_{\infty}(\Delta_k))\to\R$ to a functional $\overrightarrow{f}:\mathcal{Z}\to\R$ (such that $\overrightarrow{f}(z)=\overrightarrow{f}(R_{[1,n]}z)$) ).

We naturally consider every vector element $\overrightarrow{f}\in(\sum_{k=m}^n\oplus(X_k^*\oplus\ell_1(\Delta_k))_1)_1$ as a bounded linear functional $\overrightarrow{f}:\sum_{k=1}^n\oplus(X_k\oplus\ell_{\infty}(\Delta_k))\to\R$ and thus by the above as a linear bounded functional $\overrightarrow{f}:\mathcal{Z}\to\R$.
This includes the functionals $\overrightarrow{e_{\gamma}}^*$ for $\gamma\in\Delta_n$, $\overrightarrow{x}^*$ for $x^*\in X_n^*$, $\overrightarrow{b^*}\in B_{p,n}$, $\overrightarrow{f}\in K_{n,p}$ where $p<n$. Moreover as it follows by Lemma \ref{norming} all these extended functionals belong in the unit ball of $\mathcal{Z}^*$.
\end{notation}

We recall that $P_{(m,n]}=i_m R_{[1,m]}-i_nR_{[1,n]}$ for every $m\leq n$.
For $\gamma\in\Delta_n$ we define $d_{\gamma}^*:\mathcal{Z}\to\R$ as $d_{\gamma}^*=\overrightarrow{e_{\gamma}^*}\circ P_{\{n\}}$. Considering Notation \ref{restrict} we obtain that $d_{\gamma}^*=\overrightarrow{e_{\gamma}^*}-c_{\gamma}^*$.
The boundness principle of $i_{k,m}$ yields that
$\|i_n\|\leq 2$, $\|P_{[1,n]}\|\leq 4$,  $\|P_{(n,\infty)}\|\leq 3$ for every $n\in\N$ while $\|d_{\gamma}^*\|\leq 3$ for every $\gamma\in\Gamma$.

The following proposition is crucial in order to estimate norms in $\mathcal{Z}$. The proof shares similar arguments as in \cite{AH} (Proposition 4.5).
\begin{proposition}\label{evalanal}
For every $\gamma\in\Delta_{n+1}$ there exists natural numbers $0<p_1<q_1<p_2<q_2<\ldots<p_a<q_a=n+1$ with $a\leq n_j$, elements $(\xi_i)_{i=1}^a$ with $\xi_a=\gamma$, $\xi_i\in\Delta_{q_i+1}$, $w(\xi_i)=w(\gamma)$ and functionals $\overrightarrow{b_i^*}\in B_{p_i,q_i}$
such that \[\overrightarrow{e_{\gamma}^*}=\sum_{i=1}^ad_{\xi_i}^*+\frac{1}{m_j}\sum_{i=1}^a\overrightarrow{b_i^*}\circ P_{(p_i,q_i]}\]
\end{proposition}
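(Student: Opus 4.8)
The plan is to unravel the recursive definition of the element $\gamma\in\Delta_{n+1}$ and its associated functional, peeling off one layer of the recursion at a time. The starting point is the identity $d_\gamma^*=\overrightarrow{e_\gamma^*}-c_\gamma^*$, i.e. $\overrightarrow{e_\gamma^*}=d_\gamma^*+c_\gamma^*$, which holds by the definition of $d_\gamma^*$ and Notation \ref{restrict}. If $w(\gamma)=1$, i.e. $\gamma\in\Delta_{n+1}^0$, then $c_\gamma^*=\overrightarrow{f}$ for some $\overrightarrow{f}\in K_{p,n}$, and there is no ``evolution'' to analyze; but the interesting case is $\gamma\in\Delta_{n+1}^1$ with $w(\gamma)=m_j$, where $\gamma=(n+1,\eta,m_j,p,\overrightarrow{b^*})$ (or the $\eta$-free variant, treated similarly) and
\[
c_\gamma^*=\overrightarrow{e_\eta^*}\circ P_{(p,\,?]}\;+\;\frac{1}{m_j}\,\overrightarrow{b^*}\circ P_{(p,n]},
\]
where one must first rewrite $c_\gamma^*(z)=\overrightarrow{e_\eta^*}(z)+\frac{1}{m_j}\overrightarrow{b^*}(z-i_pR_{[1,p]}z)$ using $P_{(p,n]}=i_nR_{[1,n]}-i_pR_{[1,p]}$ together with the fact that $\overrightarrow{b^*}$ is supported on coordinates in $(p,n]$, so that $\overrightarrow{b^*}(z-i_pR_{[1,p]}z)=\overrightarrow{b^*}(P_{(p,n]}z)$. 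This produces the last summand $\frac{1}{m_j}\overrightarrow{b_a^*}\circ P_{(p_a,q_a]}$ with $q_a=n+1$ (after the harmless reindexing forced by the fact that $\overrightarrow{b^*}\in B_{n,p}$ acts through $R_{[1,n]}$ while $d_\gamma^*$ lives on $P_{\{n+1\}}$; here $\xi_a=\gamma$).

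Next I would set up the induction. Since $\eta\in\Delta_p^1$ with $p<n-1$ and $w(\eta)=m_j=w(\gamma)$, the inductive hypothesis applied to $\eta$ (valid because $\operatorname{rank}(\eta)<n+1$) yields natural numbers $0<p_1<q_1<\cdots<p_{a-1}<q_{a-1}=p$, elements $(\xi_i)_{i=1}^{a-1}$ with $\xi_{a-1}=\eta$, $\xi_i\in\Delta_{q_i+1}$, $w(\xi_i)=m_j$, and $\overrightarrow{b_i^*}\in B_{p_i,q_i}$ such that
\[
\overrightarrow{e_\eta^*}=\sum_{i=1}^{a-1}d_{\xi_i}^*+\frac{1}{m_j}\sum_{i=1}^{a-1}\overrightarrow{b_i^*}\circ P_{(p_i,q_i]}.
\]
Substituting this expansion of $\overrightarrow{e_\eta^*}$ into $c_\gamma^*$ (keeping in mind that $\overrightarrow{e_\eta^*}$ enters $c_\gamma^*$ composed with the appropriate restriction/projection onto coordinates $\le n$, which already equals the natural action since each $d_{\xi_i}^*$ and $\overrightarrow{b_i^*}\circ P_{(p_i,q_i]}$ has range contained in $[1,p]\subset[1,n]$) and then adding back $d_\gamma^*$, I obtain exactly the claimed formula with the $a$-th data being $p_a:=p$, $q_a:=n+1$, $\xi_a:=\gamma$, $\overrightarrow{b_a^*}:=\overrightarrow{b^*}$. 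The chain $p_1<q_1<\cdots<p_{a-1}<q_{a-1}=p=p_a<q_a=n+1$ is strictly increasing as required, $w(\xi_i)=w(\gamma)$ for all $i$, and the count $a\le n_j$ follows from the bound $a-1\le n_j-1$ coming from the inductive hypothesis for $\eta$ — this is where Assumption 2.3 of \cite{AH} (the relation $n_j\ge m_j^2$, or more precisely the bookkeeping that the weight-$m_j$ ``age'' of any node never exceeds $n_j$) is invoked. The base case is $\gamma$ of weight $m_j$ with $\eta$ absent, i.e. $\gamma=(n+1,m_j,p,\overrightarrow{b^*})$, where $a=1$ and the formula reads $\overrightarrow{e_\gamma^*}=d_\gamma^*+\frac1{m_j}\overrightarrow{b^*}\circ P_{(p,n+1]}$, which is immediate from $\overrightarrow{e_\gamma^*}=d_\gamma^*+c_\gamma^*$ and the rewriting above.

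The main obstacle, and the step deserving the most care, is the precise handling of the projections and the index shift: one must verify that $\overrightarrow{b^*}(z-i_pR_{[1,p]}z)$ genuinely equals $\overrightarrow{b^*}\circ P_{(p_a,q_a]}(z)$ with $q_a=n+1$ rather than $n$ — this uses that $\overrightarrow{b^*}\in B_{n,p}$ has no mass on $\Delta_{n+1}$, so extending the restriction from $R_{[1,n]}$ to $R_{[1,n+1]}$ changes nothing, together with the telescoping identity for the $P$'s and the extension convention for $c_\gamma^*$ in Notation \ref{restrict}. Equally delicate is checking that the $\overrightarrow{e_\eta^*}$ appearing inside $c_\gamma^*$ can be replaced by its full expansion without boundary corrections, which relies on $\operatorname{ran}$ of each piece $d_{\xi_i}^*$, $\overrightarrow{b_i^*}\circ P_{(p_i,q_i]}$ being contained in $[1,q_{a-1}]=[1,p]$ so that composing with $R_{[1,n]}$ (as $c_\gamma^*$ does) acts as the identity on each of them. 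Once these compatibility checks are in place, the induction closes routinely, and the age bound $a\le n_j$ is inherited verbatim from \cite{AH}.
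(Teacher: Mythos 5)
Your proposal is correct and follows exactly the route the paper intends: the paper gives no proof of this proposition, deferring to Proposition 4.5 of Argyros--Haydon, and your induction on the rank/age of $\gamma$ — peeling off one layer via $\overrightarrow{e_{\gamma}^*}=d_{\gamma}^*+c_{\gamma}^*$, rewriting $\overrightarrow{b^*}(z-i_pR_{[1,p]}z)$ as $\overrightarrow{b^*}\circ P_{(p,q_a]}(z)$, and substituting the inductive expansion of $\overrightarrow{e_{\eta}^*}$ — is precisely that argument. The only blemishes are bookkeeping ones that mirror inconsistencies already in the paper's own statement (e.g.\ whether $q_{a-1}<p_a$ or $q_{a-1}=p_a$, and the off-by-one in $\xi_i\in\Delta_{q_i+1}$ versus $\xi_a=\gamma\in\Delta_{n+1}=\Delta_{q_a}$), so no substantive change is needed.
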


The sequence $\{p_i,q_i,\xi_i,b_i^*\}_{i=1}^a$ is called the evaluation analysis of $\gamma$.
Note that for $\gamma\in\Delta_{n}^0$ (i.e. $w(\gamma)=1$) the elements $\gamma$ admits the trivial evaluation analysis consisting of its singleton.

\begin{notation}\label{decompose}
By our already introduced terminology for every $p\leq n$ we identify  $(\sum_{k=p}^n\oplus X_k)_{\infty}$ with $(\sum_{k=p}^n\oplus (X_k\oplus\{0\})_{\infty})_{\infty}$ as \[\overrightarrow{x}=(x_p,\ldots,x_n)\to \overrightarrow{x}=(\overrightarrow{x}_p,\ldots, \overrightarrow{x}_n)\] where $\overrightarrow{x}_k=(x_k,0)\in (X_k\oplus\{0\})_{\infty}$. Similarly we identify $(\sum_{k=p}^n\oplus \ell_{\infty}(\Delta_k))_{\infty}$ with $(\sum_{k=p}^n\oplus(\{0\}\oplus\ell_{\infty}(\Delta_k))_{\infty})_{\infty}$.
\end{notation}

\begin{remark}\label{split}
Let $m\leq n$ and $z\in\mathcal{Z}$ with $\ran z=(m,n]$ (or equivalently $z\in Y_n$ and $P_{[1,m]}(z)=0$). By Lemma \ref{dual1} there exists $\overrightarrow{u}\in\sum_{k=m+1}^{n}\oplus(X_k\oplus\ell_{\infty}(\Delta_k))$ such that $z=i_n(\overrightarrow{u})$.
Using the above identification we can split $\overrightarrow{u}$ as $\overrightarrow{u}'+\overrightarrow{u}''$ such that $\overrightarrow{u}'\in(\sum_{k=m+1}^n\oplus X_k)_{\infty}$ while $\overrightarrow{u}''\in(\sum_{k=m+1}^n\oplus\ell_{\infty}(\Delta_k))_{\infty}$. Indeed if $\overrightarrow{u}(k)=(x_k,y_k)$ we set $\overrightarrow{u}'=(x_{m+1},\ldots,x_n)$ and $\overrightarrow{u}''=(y_{m+1},\ldots,y_n)$.
We also set $z'=i_n(\overrightarrow{0}_{1,m},\overrightarrow{u}')$ and $z''=i_n(\overrightarrow{0}_{1,m},\overrightarrow{u}'')$. Then it is easy to check that $z=z'+z''$. Moreover note that \[\|\overrightarrow{u}'\|_{\infty}=\sup\{|\overrightarrow{x}^*(\overrightarrow{u})|:\ x^*\in X_k^*,\ \ m+1\leq k\leq n\}\] while
\[\|\overrightarrow{u}''\|_{\infty}=\sup\{|\overrightarrow{e_{\gamma}}^*(\overrightarrow{u})|:\ \gamma\in\Delta_k,\ \ m+1\leq k\leq n\}.\]
Considering Notation \ref{restrict} for $x^*\in X_k^*$ or $\gamma\in\Delta_k$ with $m+1\leq k\leq n$ we have that $\overrightarrow{x}^*(z)=\overrightarrow{x}^*(u)=x^*(x_k)$ and similarly $\overrightarrow{e_{\gamma}}^*(z)=\overrightarrow{e_{\gamma}}^*(u)=e_{\gamma}^*(y_k)=y_k(\gamma)$. In order to simplify the symbolisms we shall denote $\overrightarrow{u}''$ by $(\overrightarrow{u}_{\gamma})_{\gamma\in\Gamma_n\setminus\Gamma_m}$ where $\overrightarrow{u}_{\gamma}=\overrightarrow{e_{\gamma}}^*(\overrightarrow{u})$.
\end{remark}

\begin{definition}
Let $z\in\mathcal{Z}$ with $\max\ran(z)=n$ and let $\overrightarrow{u}\in\sum_{k=1}^{n}\oplus(X_k\oplus\ell_{\infty}(\Delta_k))$ such that $z=i_n(\overrightarrow{u})$. We define $\supp_{loc}(z)=\overrightarrow{u}$. We say that the local support of $z$ has no weight if $(\overrightarrow{u}_{\gamma})_{\gamma\in\Gamma_n}=\overrightarrow{0}_{1,n}$ where $\overrightarrow{u}_{\gamma}=\overrightarrow{e_{\gamma}}^*(\overrightarrow{u})$ as in Remark \ref{split}.
\end{definition}

The concept of the next lemma is based on arguments of Lemma 7.2 in
\cite{AH}.

\bl\label{norm} Let $z\in \mathcal{Z}$ such that $\ran z\subset(p,q]$.
Then, there exists $\gamma\in\Gamma$ with $\rank(\gamma)>p$ such that
$|\overrightarrow{e_{\gamma}}^*(z)|>\frac{1}{2}\|z\|$ \el

\bp Let $\supp_{loc}(z)=\overrightarrow{u}$ where $\overrightarrow{u}=\overrightarrow{u}'+(\overrightarrow{u}_{\gamma})_{\gamma\in\Gamma_q\setminus\Gamma_p}$ as in Remark \ref{decompose}. Since $z=i_q(\overrightarrow{0}_{1,p},\overrightarrow{u})$ and $\|i_q\|\leq 2$ we have that
$\|\overrightarrow{u}\|_{\infty}\geq\frac{1}{2}\|z\|$. Note that \[\|\overrightarrow{u}\|_{\infty}=\max\{\|\overrightarrow{u}'\|_{\infty},\|(\overrightarrow{u}_{\gamma})_{\gamma}\|_{\infty}\}.\] In the case that $\|\overrightarrow{u}\|_{\infty}=\|(\overrightarrow{u}_{\gamma})_{\gamma}\|_{\infty}$ if we consider Remark \ref{split} we can find $p+1\leq k\leq q$ and $\gamma\in\Delta_k$ such that $|\overrightarrow{e_{\gamma}}^*(\overrightarrow{u})|=\|\overrightarrow{u}\|_{\infty}$. Hence
\[\overrightarrow{e_{\gamma}}^*(z)=\overrightarrow{e_{\gamma}}^*(\overrightarrow{u})\geq\frac{1}{2}\|x\|\] Otherwise $\|\overrightarrow{u}\|_{\infty}=\|\overrightarrow{u}'\|_{\infty}$ as in Remark \ref{split} and applying Hahn Banach theorem we can find $x^*\in B_{X_k^*}$ with $p+1\leq k\leq q$ such that $|\overrightarrow{x}^*(\overrightarrow{u})|=\|\overrightarrow{u}\|_{\infty}$. Since $F_k$ is 1-norming in
$B_{X_k^*}$, we may assume that $x^*\in F_k$ and let $l\geq p$ such that $x^*\in F_k^{l}$ and set $\overrightarrow{f}=(\overrightarrow{x}^*,\overrightarrow{0}_{k+1,l})$. Observe that the functional $\overrightarrow{f}$ belongs in $K_{l,k-1}$ and moreover $\overrightarrow{f}(z)=\overrightarrow{x}^*(\overrightarrow{u})$. Let
$\gamma\in\Delta_{l+1}$ of the form $\gamma=(l+1,\overrightarrow{f},n,0)$. Notice that $d_{\gamma}^*(z)=0$, thus $|\overrightarrow{e_{\gamma}}^*(z)|=|c_{\gamma}^*(z)|=|\overrightarrow{f}(z)|=\|\overrightarrow{u}\|\geq\frac{1}{2}\|z\|$ as desired. \ep

\subsection{Rapidly Increasing Sequences in $\mathcal{Z} =(\sum_n X_n)_{AH}$.}

For the sequel whenever we write $\mathcal{Z}$ we refer to $\mathcal{Z}=(\sum\oplus X_n)_{AH}$ for the fixed sequence of separable Banach spaces  $(X_n,\|\cdot\|_n)_{n\in\N}$.

We recall that $\mathcal{Z}$ admits a Schauder Decomposition $(Z_n)_{n\in\N}$ for which we define the range of elements of $\mathcal{Z}$ as well as block sequences in $\mathcal{Z}$. We shall define special types of block sequences that are useful in order to study the space as well as its bounded linear operators.

We start with the following lemma which concerns the bounded block sequences of $\mathcal{Z}$ in general.

\begin{lemma}\label{functional}
Let $(z_n)_{n\in I}$ be a block sequence with $\|z_n\|\leq C$ for every $n\in I$ and let $(a_n)_{n\in I}$ be a sequence of scalars. Then for every $\overrightarrow{f}\in K_{p,m}$ with $p>m$ there exists $n_0\in\N$ such that $|\overrightarrow{f}(\sum_na_nz_n)|\leq C|a_{n_0}|$.
\end{lemma}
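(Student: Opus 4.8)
The statement concerns the action of a functional $\overrightarrow{f}=(x^*_{m+1},\ldots,x^*_p)\in K_{p,m}$ on a finite linear combination $\sum_n a_n z_n$ of a block sequence. The plan is to exploit the key structural feature of $K_{p,m}$: its elements have $\ell_1$-norm at most $1$ \emph{distributed across distinct coordinates} $X_k^*$ with $m<k\le p$, and each block vector $z_n$ meets at most a controlled set of those coordinates. First I would recall from Remark~\ref{split} and Notation~\ref{restrict} that for $x^*\in X_k^*$ the extended functional $\overrightarrow{x}^*$ acts on $z$ by $\overrightarrow{x}^*(z)=x^*(x_k)$ where $x_k$ is the $X_k$-component of $\supp_{loc}(z)$; in particular $\|\overrightarrow{x}^*\|_{\mathcal{Z}^*}\le 1$ when $\|x^*\|_{X_k^*}\le 1$, and $\overrightarrow{x}^*(z)=0$ whenever $k\notin\ran z$.

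Next, since $(z_n)_{n\in I}$ is a block sequence with respect to $(Z_k)_{k\in\N}$, the ranges $\ran z_n$ are successive intervals of $\N$. For each coordinate index $k$ with $m<k\le p$ the summand $x^*_k$ of $\overrightarrow{f}$ is supported in $X_k^*$, hence $\overrightarrow{x}^*_k(z_n)=0$ for every $n$ except the (at most one) index $n$ with $k\in\ran z_n$. Writing $\overrightarrow{f}=\sum_{k=m+1}^p \overrightarrow{x}^*_k$ and using $\overrightarrow{x}^*_k\big(\sum_n a_n z_n\big)=\sum_{n:\,k\in\ran z_n} a_n\,x^*_k(\text{component})$, I would then estimate
\[
\Big|\overrightarrow{f}\Big(\sum_n a_n z_n\Big)\Big|\le \sum_{k=m+1}^p \|x^*_k\|_{X_k^*}\cdot\big|a_{n(k)}\big|\cdot\big\|\text{component of }z_{n(k)}\text{ in }X_k\big\|,
\]
where $n(k)$ is the unique block index whose range contains $k$. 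Since the $X_k$-component of $z_{n(k)}$ has norm at most $\|z_{n(k)}\|\le C$, and $\sum_k\|x^*_k\|_{X_k^*}\le 1$, the right side is at most $C\sup_k |a_{n(k)}|$. Choosing $n_0$ to be the block index realizing this supremum gives $|\overrightarrow{f}(\sum_n a_n z_n)|\le C|a_{n_0}|$.

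\textbf{Main obstacle.} The only delicate point is bookkeeping with the identifications: one must be careful that the coordinates on which $\overrightarrow{f}$ acts, namely $m<k\le p$, interact with the local support decomposition of $z_n$ correctly, i.e.\ that $\overrightarrow{x}^*_k(z_n)$ genuinely depends only on the $X_k$-slot of $\supp_{loc}(z_n)$ and vanishes unless $k\in\ran z_n$. This is exactly the content of Remark~\ref{split} together with the fact that $R_{[1,k]}z_n$ kills the part of $z_n$ lying beyond level $k$ and, since $z_n$ is a block vector, its local support below $\min\ran z_n$ is zero. Once this is in place the estimate is immediate; no analysis of the evaluation analysis of any $\gamma$ is needed because functionals in $K_{p,m}$ carry no $\ell_1(\Delta_k)$ part and hence no weight to unravel.
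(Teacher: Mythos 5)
Your proof is correct and is essentially the paper's own argument: both decompose $\overrightarrow{f}$ into its coordinate functionals $\overrightarrow{x}^*_k$, observe that blockness forces each such coordinate to act on at most one $z_{n(k)}$, and then sum the estimates using $\sum_k\|x_k^*\|_{X_k^*}\le 1$ and $\|z_{n(k)}\|\le C$ before picking $n_0$ to maximize $|a_{n(k)}|$. No meaningful difference in route or generality.
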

\bp Let $\overrightarrow{f}\in K_{p,m}$ of the form $\overrightarrow{f}=(x^*_{p+1},\dots,x^*_m)$ where $x_k^*\in F_k^m\oplus\{0\}$ for every $p+1\leq k\leq m$ and $\sum_k\|x_k^*\|_{X_k^*}\leq 1$. We also recall that $\|\overrightarrow{x_k^*}\|\leq\|\overrightarrow{x_k^*}\|_1=\|x_k^*\|_{X_k^*}$. Since $(z_n)_n$ is block for every $k$ there exists $n_k$ not necessarily pairwise different such that $\overrightarrow{x_k^*}(\sum_na_nz_n)=\overrightarrow{x_k^*}(a_{n_k}z_{n_k})$. We set $a_{n_0}=\max\{a_{n_k}:\ k=p+1,\ldots,m\}$. It follows that $|\overrightarrow{f}(\sum_ka_kz_k)|\leq C|a_{k_0}|\sum_k\|\overrightarrow{x_k^*}\|\leq C|a_{k_0}|$.
\ep
We continue with the definition of Rapidly Increasing Sequences (RIS).

\begin{definition}
We say that a block sequence $(z_k)_{k\in\N}$ in $\mathcal{Z}$ is $C$-RIS if there exists a constant $C>0$ and a strictly increasing sequence $(j_k)_{k\in\N}$ such that
\begin{enumerate}
\item $\|z_k\|\leq C$ for all $k\in\N$
\item $j_{k+1}>\max\ran z_k$
\item $\overrightarrow{|e_{\gamma}^*}(z_k)|\leq \frac{C}{m_i}$ whenever $w(\gamma)=m_i$ and $i<j_k$.
\end{enumerate}
\end{definition}

\begin{lemma}\label{RISX}
Let $z\in\mathcal{Z}$ such that $\max ran z=q$ and suppose that $\supp_{loc}(z)$ has no weight. Then, for every $\gamma\in\Gamma$ of $w(\gamma)=m_j$ we have that $|\overrightarrow{e_{\gamma}^*}(z)|\leq\frac{3\|z\|}{m_j}$.
\end{lemma}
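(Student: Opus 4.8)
The plan is to run the evaluation analysis of $\gamma$ (Proposition \ref{evalanal}) against $z$ and to observe that, since $\supp_{loc}(z)$ carries no weight, every part of the analysis that ``reads a $\Delta$-coordinate'' of $z$ is annihilated, leaving a single surviving term that can be controlled by $\|P_{(n,\infty)}\|\le 3$.

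First I would set $z=i_q(\overrightarrow{u})$ with $\overrightarrow{u}=\supp_{loc}(z)$, so the no-weight hypothesis reads $\overrightarrow{e_\gamma^*}(\overrightarrow{u})=0$ for all $\gamma\in\Gamma_q$; in particular $R_{[p+1,r]}z=R_{[p+1,r]}\overrightarrow{u}$ has vanishing $\ell_\infty(\Delta_k)$-parts whenever $r\le q$. Fix $\gamma$ with $w(\gamma)=m_j$. If $\rank\gamma\le q$ then $\overrightarrow{e_\gamma^*}(z)=\overrightarrow{e_\gamma^*}(R_{[1,\rank\gamma]}z)=\overrightarrow{e_\gamma^*}(\overrightarrow{u})=0$ and there is nothing to prove, so I assume $\rank\gamma>q$; then $P_{[1,m]}z=z$ for every $m\ge q$ and $P_{(q,\infty)}z=0$.

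Now take the evaluation analysis $\{p_i,q_i,\xi_i,\overrightarrow{b_i^*}\}_{i=1}^{a}$ of $\gamma$ ($a\le n_j$), with $\xi_i\in\Delta_{r_i}$, $\overrightarrow{b_i^*}\in B_{p_i,q_i}$ and $0<p_1<q_1<\cdots<p_a<q_a=\rank\gamma$, giving $\overrightarrow{e_\gamma^*}=\sum_{i=1}^a d_{\xi_i}^*+\tfrac1{m_j}\sum_{i=1}^a\overrightarrow{b_i^*}\circ P_{(p_i,q_i]}$. The first step is that each $d_{\xi_i}^*(z)=\overrightarrow{e_{\xi_i}^*}(P_{\{r_i\}}z)=0$: if $r_i>q$ this is $0$ since $P_{(q,\infty)}z=0$, and if $r_i\le q$ then $P_{\{r_i\}}z=i_{r_i}(\overrightarrow{0}_{1,r_i-1},\overrightarrow{u}(r_i))$ has zero $\ell_\infty(\Delta_{r_i})$-component by hypothesis, so $\overrightarrow{e_{\xi_i}^*}$ kills it. Hence $\overrightarrow{e_\gamma^*}(z)=\tfrac1{m_j}\sum_{i=1}^a\overrightarrow{b_i^*}(P_{(p_i,q_i]}z)$. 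The second step analyses the remaining terms: since $\overrightarrow{b_i^*}\in\sum_{k=p_i+1}^{q_i}\oplus\ell_1(\Delta_k)$, the scalar $\overrightarrow{b_i^*}(P_{(p_i,q_i]}z)$ only depends on the $\ell_\infty(\Delta_k)$-coordinates of $P_{(p_i,q_i]}z$ for $k\in(p_i,q_i]$. If $q_i\le q$, then by Lemma \ref{dual1} $P_{(p_i,q_i]}z=i_{q_i}(\overrightarrow{0}_{1,p_i},R_{[p_i+1,q_i]}\overrightarrow{u})$, whose $\ell_\infty(\Delta)$-parts vanish, so the term is $0$; if $p_i\ge q$, then $P_{(p_i,q_i]}z=P_{[1,q_i]}z-P_{[1,p_i]}z=z-z=0$. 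Because the open intervals $(p_i,q_i)$ are pairwise disjoint there is at most one index $i_0$ with $p_{i_0}<q<q_{i_0}$, and every $i\neq i_0$ falls under one of the two vanishing cases. For $i_0$ we use $q_{i_0}>q$ to rewrite $P_{(p_{i_0},q_{i_0}]}z=z-P_{[1,p_{i_0}]}z=P_{(p_{i_0},\infty)}z$, whence
\[\bigl|\overrightarrow{e_\gamma^*}(z)\bigr|=\frac1{m_j}\bigl|\overrightarrow{b_{i_0}^*}\bigl(P_{(p_{i_0},\infty)}z\bigr)\bigr|\le\frac1{m_j}\,\|\overrightarrow{b_{i_0}^*}\|_1\,\|P_{(p_{i_0},\infty)}\|\,\|z\|\le\frac{3\|z\|}{m_j},\]
using $\|\overrightarrow{b_{i_0}^*}\|_1\le1$ (Remark \ref{norming}) and $\|P_{(n,\infty)}\|\le3$; and if no such $i_0$ exists the left-hand side is $0$.

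The \textbf{main obstacle} is precisely the bookkeeping of the second step: a priori the $\tfrac1{m_j}$-part has up to $a\le n_j$ terms, and bounding each one crudely by $\|\overrightarrow{b_i^*}\|_1\|P_{(p_i,q_i]}\|\|z\|$ would only give the useless estimate $\tfrac{4n_j}{m_j}\|z\|$. The no-weight hypothesis is what forces all but (at most) one of these terms to be exactly zero; identifying the unique surviving interval $(p_{i_0},q_{i_0}]$ — the one straddling $q=\max\ran z$ — and replacing $P_{(p_{i_0},q_{i_0}]}z$ by $P_{(p_{i_0},\infty)}z$ is what delivers the clean constant $3$.
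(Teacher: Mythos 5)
Your overall strategy --- expand $\overrightarrow{e_{\gamma}}^*$ via the evaluation analysis, observe that only the interval straddling $q=\max\ran z$ survives, and bound that one term by $\|\overrightarrow{b_{i_0}^*}\|_1\|P_{(p_{i_0},\infty)}\|\|z\|\le 3\|z\|$ --- is the same as the paper's, and your final formula and constant are right. But both of your intermediate vanishing claims are false as stated, and for the same reason: you identify $P_Iz$ with the naive restriction of the local support, forgetting that the Bourgain--Delbaen embeddings $i_n$ extend beyond $n$ with nontrivial $\ell_\infty(\Delta_k)$-components. Concretely, for $r\le q$ one has $P_{\{r\}}z=i_r\bigl(\overrightarrow{0}_{1,r-1},\,\overrightarrow{u}(r)-(0,(c_\eta^*(z))_{\eta\in\Delta_r})\bigr)$, not $i_r(\overrightarrow{0}_{1,r-1},\overrightarrow{u}(r))$; equivalently, since $d_\xi^*=\overrightarrow{e_{\xi}}^*-c_\xi^*$, the no-weight hypothesis gives $\overrightarrow{e_{\xi}}^*(z)=0$ but only $d_{\xi}^*(z)=-c_{\xi}^*(z)$, which is typically nonzero (it sees the $X_k$-parts of $z$ through $c_\xi^*$). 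Likewise $R_{[p_i+1,q_i]}P_{(p_i,q_i]}z=R_{[p_i+1,q_i]}\overrightarrow{u}-R_{[p_i+1,q_i]}i_{p_i}R_{[1,p_i]}z$, and the second summand lies in $\sum_{k>p_i}\oplus\ell_\infty(\Delta_k)$ without being zero, so $\overrightarrow{b_i^*}(P_{(p_i,q_i]}z)\ne 0$ in general even when $q_i\le q$. In particular your intermediate identity $\overrightarrow{e_{\gamma}}^*(z)=\frac1{m_j}\sum_{i=1}^a\overrightarrow{b_i^*}(P_{(p_i,q_i]}z)$ is false; your final answer survives only because the two errors cancel, since for $q_i<q$ one computes $d_{\xi_i}^*(z)=-c_{\xi_i}^*(z)=-\frac1{m_j}\overrightarrow{b_i^*}(P_{(p_i,q_i]}z)$, which exactly kills the corresponding term of the $\frac1{m_j}$-sum.

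The paper sidesteps this by using the \emph{partial} evaluation analysis: with $i$ maximal such that $p_i\le q$, one writes $\overrightarrow{e_{\gamma}}^*(z)=\overrightarrow{e_{\xi_{i-1}}}^*(z)+\sum_{i'\ge i}d_{\xi_{i'}}^*(z)+\frac1{m_j}\sum_{i'\ge i}\overrightarrow{b_{i'}^*}(P_{(p_{i'},q_{i'}]}z)$. All the troublesome low-rank contributions are packaged into the single functional $\overrightarrow{e_{\xi_{i-1}}}^*$, which is not a $d^*$ but a genuine coordinate functional of rank $q_{i-1}+1\le q$, and therefore vanishes on $z$ directly by the no-weight hypothesis; the terms with $i'>i$ die because $\ran z\subset[1,q]$, and the one remaining term is bounded exactly as you do. You should restructure your first step along these lines rather than trying to annihilate each $d_{\xi_i}^*$ and each $\overrightarrow{b_i^*}\circ P_{(p_i,q_i]}$ individually.
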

\bp The proof share a similar concept of Lemma 5.7 in \cite{AH}. Observe that for every $\gamma\in\Gamma$ with $\rank\gamma>q$ we have that $d_{\gamma}^*(x)=0$ and $P_{(r,\infty)}(x)=0$ for every $r>q$. If $\gamma\in\Gamma$ with $w(\gamma)=m_j$ there exists $\{p_i,q_i,b_i^*,\xi_i\}_{i=1}^l$ , $l\leq n_j$ such that $\overrightarrow{e_{\gamma}}^*=\sum_{i}d_{\eta_i}^*+\frac{1}{m_j}\sum_i \overrightarrow{b_i}^*\circ P_{(p_i,q_i]}$. Let $i$ be the maximum such that $p_i\leq q$. For every $i'>i$ note that $p_{i'}>q$ and also $q_{i'}>p_{i'}>q$ hence $P_{(p_{i'},q_{i'}]}(z)=0$ and $d_{\xi_{i'}}^*(z)=0$. If $i=1$ then $\overrightarrow{e_{\gamma}}^*(z)=\frac{1}{m_j}\overrightarrow{b_1}^*(P_{(p_1,q_1]}z)$. Otherwise $i-1\geq 1$ and $\overrightarrow{e_{\gamma}}^*(z)=\overrightarrow{e_{\eta}}_{i-1}^*(z)+\frac{1}{m_j}\overrightarrow{b_i}^*(P_{(p_i,q_i]}z)$. Notice that if $\overrightarrow{u}=\supp_{loc}(z)$ then $\overrightarrow{e_{\eta}}_{i-1}^*(z)=\overrightarrow{e_{\eta}}_{i-1}^*(\overrightarrow{u})=0$ as $\rank(\eta_{i-1})=q_{i-1}+1<p_i<q$. Therefore in every case we conclude that $\overrightarrow{e_{\gamma}}^*(z)=\frac{1}{m_j}\overrightarrow{b_i}^*(P_{(p_i,q_i]}z)=\frac{1}{m_j}\overrightarrow{b_i}^*(P_{(p_i,\infty)}z)\leq \frac{3\|z\|}{m_j}$.
\ep

\begin{corollary}\label{RISXS}
Let $(z_k)_{k\in\N}$ be a block sequence in $\mathcal{Z}$ that is bounded by a constant $C>0$ and assume that $\supp_{loc}(z_k)$ has no weight for every $k\in\N$. Then there exists a subsequence $(z_i)_i$ that is $3C$-RIS.
\end{corollary}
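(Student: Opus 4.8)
The plan is to use Lemma \ref{RISX} to verify the three defining conditions of a $C'$-RIS for a suitable subsequence of $(z_k)_{k\in\N}$ with $C' = 3C$. Conditions (1) and (2) are essentially free: the uniform bound $\|z_k\|\leq C \leq 3C$ gives (1) directly, and condition (2) — that the starting index of the weight estimate strictly exceeds $\max\ran z_{k-1}$ — is a matter of choosing the sequence $(j_k)$ appropriately, which we can do after passing to a subsequence so that the ranges of the $z_k$ grow fast enough.

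The substantive point is condition (3). Fix $k$ and let $\gamma\in\Gamma$ with $w(\gamma)=m_i$. Since $\supp_{loc}(z_k)$ has no weight, Lemma \ref{RISX} applies and yields
\[
|\overrightarrow{e_{\gamma}^*}(z_k)| \leq \frac{3\|z_k\|}{m_i} \leq \frac{3C}{m_i}.
\]
Note that this bound holds for \emph{every} $\gamma$ with $w(\gamma)=m_i$, with no restriction of the form $i<j_k$; this is precisely the strength afforded by the ``no weight'' hypothesis, and it is what makes the corollary almost immediate from the lemma. Thus, setting $C' = 3C$ and choosing any strictly increasing sequence $(j_k)_{k\in\N}$ satisfying $j_{k+1} > \max\ran z_k$ — which is possible after passing to a subsequence, since the ranges $\ran z_k$ are successive finite intervals and hence $\max\ran z_k\to\infty$ — all three RIS conditions are satisfied by the subsequence.

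To organize the argument cleanly: first pass to a subsequence $(z_{k_l})_{l\in\N}$ (relabel as $(z_l)_{l\in\N}$) so that the successive intervals $\ran z_l$ satisfy a gap condition allowing us to pick $j_{l+1} > \max\ran z_l$ with $(j_l)$ strictly increasing; then invoke Lemma \ref{RISX} for condition (3) exactly as above; and record that condition (1) is inherited from the uniform bound. The main (and only) obstacle is really bookkeeping — matching the index $j_k$ in the RIS definition against the ranges — and there is no genuine difficulty, since the no-weight hypothesis removes the usual need to run a careful inductive selection balancing weights against ranges, which is the technically delicate part of constructing RIS in the Argyros--Haydon setting.
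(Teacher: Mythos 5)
Your proof is correct and follows essentially the same route as the paper: pass to a subsequence so that a strictly increasing sequence $(j_k)$ with $j_{k+1}>\max\ran z_k$ can be chosen, note that condition (1) is immediate from $\|z_k\|\leq C\leq 3C$, and obtain condition (3) from Lemma \ref{RISX}, whose bound holds for every weight thanks to the no-weight hypothesis. Nothing is missing.
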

\bp
Let $p_k,q_k$ such that $\ran z_k=(p_k,q_k]$ and let $\overrightarrow{u}_k=\supp_{loc}(z_k)$ in $\sum_{i=p_k}^{q_k}\oplus(X_i\oplus\ell_{\infty}(\Delta_i))$. We also assume passing to a subsequence that $p_{k+1}>q_k+1$ for every $k$. We set $j_k=p_k$ and then we have that $(j_k)_k$ is strictly increasing and $j_{k+1}>\max\ran x_k$. Moreover, for $\gamma\in\Gamma$ with $w(\gamma)=m_j$ by Lemma \ref{RISX} we deduce that $|\overrightarrow{e_{\gamma}}^*(z_k)|\leq 3C/m_j$. Since $\|z_k\|\leq C\leq 3C$, every one of the three conditions of an RIS are satisfied.
\ep

The following proposition ensures that there is a strong connection between RIS of $\mathcal{Z}$ and the mixed Tsirelson space
$T(\mathcal{A}_{3n_j},\frac{1}{m_j})_{j\in\N}$. It is proved in a similar manner as in \cite{AH} (Proposition 5.4) and we can use the same estimates. We denote by $(e_n)_{n\in\N}$ the usual basis of $T(\mathcal{A}_{3n_j},\frac{1}{m_j})_{j\in\N}$.

Considering its norming set $W=W[(\mathcal{A}_{3n_j},\frac{1}{m_j})_{j\in\N}]$ as a subset of $c_{00}$,
for every $f\in W$ we define $\supp f= \{k\in \N:\ \ f(k)\neq 0\}$ and whenever $f$ is of the form
$f=\frac{1}{m_j}\sum_{k=1}^{n}f_i$ for some $(f_i)_{i=1}^n\subset W$ we define the weight of $f$ as $weight(f)=m_j$.

\begin{proposition}\label{basiceq}
Let $(z_k)_{k\in I}$ be an C-RIS in $\mathcal{Z}$ and $\gamma\in\Gamma$. Then, for every scalars $a_k$ and $s\in\N$ there exist $k_0\in I$ and a functional
$f\in W[(\mathcal{A}_{3n_j},\frac{1}{m_j})_{j\in\N}]$  such that
\begin{enumerate}
\item Either $f=0$ or $w(\gamma)=weight (f)$, $\supp f \subset\{k\in I:\ k>k_0\}$.
\item $|\overrightarrow{e_{\gamma}^*}\circ P_{(s,\infty)}(\sum_{k=1}^{\infty}a_kz_k)|\leq 4C|a_{k_0}|+6Cf(\sum_{k=1}^{\infty}|a_k|e_k)$
\end{enumerate} Moreover, if we assume that there exists $j_0\in\N$ such that \[|\overrightarrow{e_{\xi}}^*(\sum_{k\in I}^{\infty}a_kz_k)|\leq C\max_{k\in I}|a_k|,\] for every $J\subset I$ and all $\xi\in\Gamma$ with $w(\xi)=m_{j_0}$, then $f$ can be chosen in $ W[(\mathcal{A}_{3n_j},\frac{1}{m_j})_{j\neq j_0}]$.
\end{proposition}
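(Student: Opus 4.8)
The plan is to adapt the proof of Proposition 5.4 in \cite{AH}, arguing by induction on $\rank(\gamma)$ (equivalently, on the integer $n$ with $\gamma\in\Delta_{n+1}$), where the inductive assertion is quantified over \emph{all} $C$-RIS in $\mathcal{Z}$ with the given constant, all $s\in\N$, and all finitely supported scalar sequences $(a_k)$ (the general case then follows by a continuity argument). Throughout write $z=\sum_k a_k z_k$; since $P_{(s,\infty)}z_k=z_k$ for every $k$ except at most one ``$s$-boundary'' index, the operator $P_{(s,\infty)}$ can be carried through every estimate at the cost of one extra term $(\mathrm{const})\,C|a_k|$, absorbed into $4C|a_{k_0}|$. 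If $w(\gamma)=1$, i.e.\ $\gamma\in\Delta_{\rank\gamma}^{0}$, then $\overrightarrow{e_\gamma^*}=d_\gamma^*+\overrightarrow{f}$ with $\overrightarrow{f}$ a $K$-functional of norm $\le 1$ (Notation \ref{restrict}, Remark \ref{norming}); since $d_\gamma^*=\overrightarrow{e_\gamma^*}\circ P_{\{\rank\gamma\}}$ acts only on the single $z_k$ whose range contains $\rank\gamma$, while Lemma \ref{functional} reduces $\overrightarrow{f}(P_{(s,\infty)}z)$ to a single-coordinate bound $C|a_{n_0}|$, one obtains $|\overrightarrow{e_\gamma^*}\circ P_{(s,\infty)}(z)|\le 4C|a_{k_0}|$ with $f=0$, choosing $k_0$ among these indices to maximise $|a_{k_0}|$.

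For the inductive step suppose $w(\gamma)=m_j$ and let $\{p_i,q_i,\xi_i,\overrightarrow{b_i^*}\}_{i=1}^a$, $a\le n_j$, be the evaluation analysis of $\gamma$ from Proposition \ref{evalanal}, so that
\[
\overrightarrow{e_\gamma^*}\circ P_{(s,\infty)}=\sum_{i=1}^a d_{\xi_i}^*\circ P_{(s,\infty)}+\frac{1}{m_j}\sum_{i=1}^a \overrightarrow{b_i^*}\circ P_{(\max\{p_i,s\},\,q_i]}.
\]
Using the increasing sequence $(j_k)$ accompanying the RIS I would split $I$ into the ``late'' indices $k$ with $j_k>j$ — on which condition (3) of the RIS definition gives $|\overrightarrow{e_\eta^*}(z_k)|\le C/m_j$ for every $\eta$ of weight $m_j$ — the ``early'' indices $k<t$, where $t$ is the largest index with $j_t\le j$, which by condition (2) satisfy $\max\ran z_k<j$ and hence lie entirely to the left of $\gamma$ (so that $\overrightarrow{e_\gamma^*}(z_k)=c_\gamma^*(z_k)$ and the analysis folds back into the recursion through a truncated evaluation analysis involving only breakpoints $q_i+1<j$ and functionals of rank $<\rank\gamma$), and finally the single index $t$ together with the range-straddling and $s$-boundary indices, which form a bounded collection absorbed into $4C|a_{k_0}|$.

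For each $i$ I would expand $\overrightarrow{b_i^*}=\sum_\eta r_\eta\overrightarrow{e_\eta^*}$ with $\rank\eta\le q_i<\rank\gamma$, apply the inductive hypothesis to $\overrightarrow{e_\eta^*}\circ P_{(\max\{p_i,s\},\infty)}$ evaluated on the sub-RIS of those $z_k$ whose ranges fall inside $(p_i,q_i]$, obtaining Tsirelson functionals of weight $w(\eta)$ with pairwise successive supports; since at most $a\le n_j\le 3n_j$ of these occur, assembling them with the coefficient $\tfrac1{m_j}$ yields a legitimate weight-$m_j$ element $f\in W[(\mathcal{A}_{3n_j},\tfrac1{m_j})_{j\in\N}]$, and the $\tfrac1{m_j}$ of the evaluation analysis is exactly the coefficient carried by a weight-$m_j$ Tsirelson functional. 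The $d_{\xi_i}^*$-terms (each attached, via $P_{\{q_i+1\}}$, to the unique RIS element straddling $q_i$) and the element $z_t$ are controlled by conditions (2)--(3) and absorbed into $4C|a_{k_0}|$, producing the bound $|\overrightarrow{e_\gamma^*}\circ P_{(s,\infty)}(z)|\le 4C|a_{k_0}|+6Cf(\sum_k|a_k|e_k)$. For the ``Moreover'' clause one reruns the same induction with the extra rule that the evaluation analysis of any $\xi$ with $w(\xi)=m_{j_0}$ is never unwound: the hypothesis $|\overrightarrow{e_\xi^*}(\sum_{k\in J}a_k z_k)|\le C\max_k|a_k|$ then bounds such a $\xi$ directly (up to one boundary term), so no weight-$m_{j_0}$ sub-functional ever enters $f$, giving $f\in W[(\mathcal{A}_{3n_j},\tfrac1{m_j})_{j\ne j_0}]$.

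I expect the principal difficulty to be the bookkeeping of the boundary terms together with the verification that the constants $4C$ and $6C$ genuinely propagate through the recursion: one must check that the $d_{\xi_i}^*$-terms do not accumulate a factor of order $a\sim n_j$ but are instead tamed by RIS condition (3) on the ``late'' elements, that the ``early'' block really reduces to a strictly lower-rank instance of the statement, and that passing the inductive hypothesis through $\overrightarrow{b_i^*}\circ P_{(p_i,q_i]}$ costs no extra constant --- this last point relying on the normalisations $\|\overrightarrow{b_i^*}\|_1\le 1$, $\|d_\gamma^*\|\le 3$ and on $a\le n_j\le 3n_j$ fitting the $\mathcal{A}_{3n_j}$ appearing in the Tsirelson norming set.
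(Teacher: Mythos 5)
Your proposal is correct and follows essentially the same route as the paper: the weight-one case is handled exactly as in the text via the splitting $\overrightarrow{e_{\gamma}}^*=d_{\gamma}^*+c_{\gamma}^*$ and Lemma \ref{functional} (yielding $f=0$), and the inductive step on $\rank(\gamma)$ through the evaluation analysis of Proposition \ref{evalanal} is precisely the Argyros--Haydon argument (Proposition 5.4 of \cite{AH}) to which the paper defers, including the treatment of the ``moreover'' clause by never unwinding weight-$m_{j_0}$ elements. Your version is in fact somewhat more explicit than the paper's about the early/late/boundary bookkeeping and the role of the single $s$-boundary index.
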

\bp
Suppose that $\gamma$ belongs to $\Delta_n^0$ for some $n\in\N$ of the form $\gamma=(n,\overrightarrow{g},p,0)$ where $p<n$, $\overrightarrow{g}\in K_{p,n-1}$. Then, $\overrightarrow{e_{\gamma}}^*=d_{\gamma}^*+c_{\gamma}^*$ where $d_{\gamma}^*=\overrightarrow{e_{\gamma}}^*\circ P_{\{n\}}$ and $c_{\gamma}^*=\overrightarrow{g}$. Note that if $s>n$ then $\overrightarrow{e_{\gamma}}^*\circ P_{(s,\infty)}=0$ and there is nothing to prove. Let $s\leq n$ and $w_k=P_{(s,\infty)}z_k$ for every $k$. Observe that since $(w_k)_k$ is block by Lemma \ref{functional} there exists $k_1\in I$ such that $c_{\gamma}^*(\sum_{k\in I}a_k w_k)\leq C|a_{k_1}|$. Moreover there exists $k_2\in I$ not necessarily equal to $k_1$ such that $d_{\gamma}^*(\sum_{k\in I}a_k w_k)=d_{\gamma}^*(a_{k_2}w_{k_2})$. Let $k_0\in I$ $(k_0=k_1$ or $k_0=k_2$) such that $|a_{k_0}|=\max\{|a_{k_1}|,|a_{k_2}|\}$. Since $\|P_{(s,\infty)}\|\leq 3$, $\|d_{\gamma}^*\|\leq 3$ we conclude that
\[|\overrightarrow{e_{\gamma}}^*\circ P_{(s,\infty)}(\sum_{k\in I}a_kz_k)|\leq 4C|a_{k_0}|.\]
Setting $f=0$ the result follows.
For $\gamma\in\cup_n\Delta_n^1$ we use induction on $\rank(\gamma)=n$ in a similar manner as in \cite{AH} (Proposition 5.4).
\ep

The basic inequality yields the following:
\begin{corollary}\label{basiceqcor}
Let $(z_k)_{k=1}^{n_{j_0}}$ be an C-RIS in $\mathcal{Z}$. Then $\|n_{j_0}^{-1}\sum_{k=1}^{n_{j_0}}z_k\|\leq \frac{10C}{m_{j_0}}$. Moreover, if $(\lambda_k)_{k=1}^{n_{j_0}}$ are scalars such that $|\lambda_k|\leq 1$ and \[|\overrightarrow{e_{\gamma}}^*(\sum_{k\in J}\lambda_kz_k)|\leq C\max_{k\in J}|\lambda_k|\] for every $\gamma$ of weight $m_{j_0}$ and every interval $J\subset\{1,2,\ldots,n_{j_0}\}$, then \[\|n_{j_0}^{-1}\sum_k\lambda_k z_k\|\leq\frac{10C}{m_{j_0}^2}.\]
 \end{corollary}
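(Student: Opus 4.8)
The plan is to use Lemma~\ref{norm} to replace the norm in $\mathcal{Z}$ by a supremum of the evaluation functionals $\overrightarrow{e_{\gamma}^*}$, to feed the average into the basic inequality (Proposition~\ref{basiceq}), and to finish with the classical averaging estimates for the basis of the mixed Tsirelson space $T[(\mathcal{A}_{3n_j},\tfrac{1}{m_j})_{j\in\N}]$ (as in \cite{AH}, Section~2). The analytic content is carried by Proposition~\ref{basiceq}, so what is left is mostly bookkeeping.

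First I would set $z=n_{j_0}^{-1}\sum_{k=1}^{n_{j_0}}\lambda_k z_k$, taking $\lambda_k\equiv 1$ for the first assertion, and write $q=\max\ran z_{n_{j_0}}$, so that $\ran z\subseteq(0,q]$. By Lemma~\ref{norm} there is a $\gamma\in\Gamma$ with $|\overrightarrow{e_{\gamma}^*}(z)|>\tfrac12\|z\|$, and since every $\overrightarrow{e_{\gamma}^*}$ lies in the unit ball of $\mathcal{Z}^*$, the norm $\|z\|$ is controlled, up to an absolute constant, by $\sup_{\gamma\in\Gamma}|\overrightarrow{e_{\gamma}^*}(z)|$. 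For a fixed $\gamma$ I would apply Proposition~\ref{basiceq} to the $C$-RIS $(z_k)_{k=1}^{n_{j_0}}$, to this $\gamma$, with $s=0$ (so that $P_{(0,\infty)}$ is the identity on $\mathcal{Z}$) and with coefficients $a_k=n_{j_0}^{-1}\lambda_k$. This produces $k_0\le n_{j_0}$ and $f\in W[(\mathcal{A}_{3n_j},\tfrac{1}{m_j})_{j\in\N}]$ with
\[
|\overrightarrow{e_{\gamma}^*}(z)|\ \le\ 4C\,|a_{k_0}|+6C\,f\Bigl(\textstyle\sum_{k=1}^{n_{j_0}}|a_k|\,e_k\Bigr)\ \le\ \frac{4C}{n_{j_0}}+6C\,\Bigl\|\tfrac1{n_{j_0}}\textstyle\sum_{k=1}^{n_{j_0}}e_k\Bigr\|_{T[(\mathcal{A}_{3n_j},1/m_j)_j]},
\]
using $|\lambda_k|\le 1$ together with $f\in W$ and the $1$-unconditionality of $(e_k)$. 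Since $n_{j_0}\ge m_{j_0}^2$ and the standard averaging estimate bounds $\bigl\|n_{j_0}^{-1}\sum_{k=1}^{n_{j_0}}e_k\bigr\|$ by a constant multiple of $1/m_{j_0}$, the right-hand side is $O(1/m_{j_0})$, and tracking the constants with the parameters of \cite{AH} gives $\|z\|\le 10C/m_{j_0}$.

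For the \emph{moreover} statement the extra hypothesis, that $|\overrightarrow{e_{\xi}^*}(\sum_{k\in J}\lambda_k z_k)|\le C\max_{k\in J}|\lambda_k|$ for every $\xi$ with $w(\xi)=m_{j_0}$ and every interval $J\subseteq\{1,\dots,n_{j_0}\}$, is exactly the condition appearing in the last clause of Proposition~\ref{basiceq}; hence the functional $f$ obtained above can be chosen in the smaller norming set $W[(\mathcal{A}_{3n_j},\tfrac{1}{m_j})_{j\neq j_0}]$. In that set the averaging estimate gains a further factor $1/m_{j_0}$, because no single functional of weight $m_{j_0}$ is available to capture all $n_{j_0}$ of the $e_k$ at once and the lacunarity of $(m_j),(n_j)$ forces the additional loss; that is, $\bigl\|n_{j_0}^{-1}\sum_{k=1}^{n_{j_0}}e_k\bigr\|_{T[(\mathcal{A}_{3n_j},1/m_j)_{j\neq j_0}]}\le c/m_{j_0}^2$. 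Since also $4C/n_{j_0}\le 4C/m_{j_0}^2$, repeating the previous computation gives $\bigl\|n_{j_0}^{-1}\sum_k\lambda_k z_k\bigr\|\le 10C/m_{j_0}^2$.

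The only points needing care are that $s=0$ is admissible in Proposition~\ref{basiceq} (equivalently, one may take any $s$ below the ranges of the $z_k$), that the interval-restriction hypothesis in the second part is precisely the hypothesis of the final clause of Proposition~\ref{basiceq}, and that the correct version of the averaging estimate for the basis of $T[(\mathcal{A}_{3n_j},\tfrac{1}{m_j})_{j\in\N}]$ is invoked — the unrestricted one for the first inequality, the one avoiding weight $m_{j_0}$ for the second. I do not expect a genuine obstacle beyond this.
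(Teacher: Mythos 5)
Your proposal follows the same backbone as the paper's proof: apply Proposition \ref{basiceq} to the scalars $n_{j_0}^{-1}\lambda_k$ and finish with the averaging estimates for $W[(\mathcal{A}_{3n_j},m_j^{-1})_{j\in\N}]$, respectively $W[(\mathcal{A}_{3n_j},m_j^{-1})_{j\neq j_0}]$ for the moreover part; your identification of the corollary's interval hypothesis with the final clause of Proposition \ref{basiceq} (after rescaling by $n_{j_0}^{-1}$) is exactly what the paper does. The one place you diverge is the reduction of $\|z\|$ to the evaluation functionals, and it costs you the stated constant. The paper does not invoke Lemma \ref{norm}; it uses the exact description of the norm from Remark \ref{bp} and Remark \ref{split}: $\|z\|$ is the maximum of $\sup_{\gamma\in\Gamma}|\overrightarrow{e_{\gamma}}^*(z)|$ and the supremum of $|\overrightarrow{f}(z)|$ over $f\in B_{X_l^*}$, and the latter is at most $C/n_{j_0}\le C/m_{j_0}^2$ because the $z_k$ are block, so each such $f$ acts on at most one summand. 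Lemma \ref{norm} only yields $\|z\|\le 2\sup_{\gamma}|\overrightarrow{e_{\gamma}}^*(z)|$, and that factor of $2$ is not absorbed by ``tracking the constants'': with the standard estimate $\|n_{j_0}^{-1}\sum_k e_k\|\le m_{j_0}^{-1}$ your route lands at roughly $2\bigl(4C/n_{j_0}+6C/m_{j_0}\bigr)$, which exceeds $10C/m_{j_0}$. This is the only genuine discrepancy: as written you prove the corollary with a worse constant, which would then propagate into the numerical choices in Proposition \ref{depend}, Corollary \ref{HI} and Lemma \ref{RIS}. The repair is cosmetic --- replace the appeal to Lemma \ref{norm} by the exact max formula for the norm together with the one-summand observation for the $X_l^*$-functionals --- and with that substitution your argument coincides with the paper's.
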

\bp We apply the basic inequality for scalars $n_{j_0}^{-1}\lambda_k$  and $I=\{1,2,\ldots,n_{j_0}\}$. Using estimations of $W(\mathcal{A}_{3n_j},\frac{1}{m_j})_{j\in\N}$ ( see \cite{AH}, Section 2.4) we conclude that $|\overrightarrow{e_{\gamma}}^*(n_{j_0}^{-1}\sum_{k=1}^{n_{j_0}}z_k)|\leq \frac{10C}{m_{j_0}}$ for every $\gamma\in\Gamma$. Observe that for $f\in B_{X_l^*}$ there exists at most one $k_l\in I$ such that  $|\overrightarrow{f}(n_{j_0}^{-1}\sum_{k=1}^{n_{j_0}}z_k)|=|\overrightarrow{f}(n_{j_0}^{-1}z_{k_l})|\leq \frac{C}{n_{j_0}}$. Since $n_{j_0}\geq m_{j_0}^2$ combining all the above the proof of the first part is complete.
For the second we can apply the "moreover part" of the basic inequality and using estimations of $W(\mathcal{A}_{3n_j},\frac{1}{m_j})_{j\neq j_0}$ we deduce that $|\overrightarrow{e_{\gamma}}^*(n_j^{-1}\sum_k\lambda_k z_k)|\leq \frac{10C}{m_{j_0}^2}$ for every $\gamma\in\Gamma$. In a similar manner as above we arrive to the desired estimation of the norm.
\ep

All the above yield to the following general result that concerns AH-$\mathcal{L}_{\infty}$ sums of separable Banach spaces.
\begin{proposition}\label{RISblocking}
Let $\mathcal{Z}=(\sum_n\oplus X_n)_{AH}$ where $(X_n,\|\cdot\|_n)_{n\in\N}$ is a sequence of separable Banach spaces. Let also $Y$ be a Banach space and $T:\mathcal{Z}\to Y$ be a linear and bounded operator such that $\|Tz_n\|\to 0$ for every RIS $(z_k)_{k\in\N}$ in $\mathcal{Z}$, Then $\|Tw_k\|\to 0$, for every bounded block sequence $(w_k)_{k\in\N}$ in $\mathcal{Z}$.
\end{proposition}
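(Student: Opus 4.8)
The plan is to argue by contradiction: suppose there is a bounded block sequence $(w_k)_{k\in\N}$ with $\|w_k\|\leq C$ for all $k$ but $\|Tw_k\|\not\to 0$. Passing to a subsequence, we may assume $\|Tw_k\|\geq\delta>0$ for every $k$. The strategy is then to extract from $(w_k)$ — after suitable blocking, relabelling, and small perturbations — a sequence of averages that forms an RIS (or at least on which we can apply the hypothesis), thereby contradicting $\|Tz_n\|\to 0$ for RIS.

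First I would use Remark \ref{split}: each $w_k$ with $\ran w_k=(p_k,q_k]$ splits as $w_k=w_k'+w_k''$, where $w_k'$ has local support in $(\sum X_i)_\infty$ (no weight) and $w_k''$ has local support in $(\sum\ell_\infty(\Delta_i))_\infty$. Since $\|Tw_k\|\geq\delta$, at least one of $\|Tw_k'\|\geq\delta/2$ or $\|Tw_k''\|\geq\delta/2$ holds for infinitely many $k$; pass to that subsequence. If the $w_k'$-case occurs, then $(w_k')$ is a bounded block sequence whose local supports have no weight, so by Corollary \ref{RISXS} a subsequence is $3C$-RIS, and then $\|Tw_k'\|\to 0$ by hypothesis, a contradiction. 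So the real content is the $w_k''$-case, where each $w_k''$ is a block element "supported on the $\ell_\infty(\Delta_i)$ coordinates." Here I would invoke the norm estimate: by Lemma \ref{norm} applied to $w_k''$, there exists $\gamma_k\in\Gamma$ with $\rank(\gamma_k)>p_k$ and $|\overrightarrow{e_{\gamma_k}^*}(w_k'')|>\frac12\|w_k''\|$; after passing to a further subsequence we may assume the weights $w(\gamma_k)$ are all equal to some fixed $m_{j_0}$, or else they tend to infinity.

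The heart of the proof is to now produce RIS-type averages. Take disjoint blocks $J_1<J_2<\cdots$ of the index set with $|J_r|=n_{j_r}$ for a rapidly increasing sequence $(j_r)$, and set $v_r=n_{j_r}^{-1}\sum_{k\in J_r}w_k''$ (or weighted averages $n_{j_r}^{-1}\sum_{k\in J_r}\lambda_k w_k''$). One checks, using Proposition \ref{evalanal} and Lemma \ref{functional} exactly as in the proof of Proposition \ref{basiceq}, that for $\gamma$ of a given weight $m_i$ the quantity $|\overrightarrow{e_\gamma^*}(v_r)|$ is small once $i$ is small relative to $r$ (the weight-$m_{j_0}$ contributions being controlled by the $n_{j_r}^{-1}$ factor and the "moreover" clause of Proposition \ref{basiceq}). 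Consequently $(v_r)_r$ is a $C'$-RIS for an appropriate constant, so $\|Tv_r\|\to 0$. On the other hand, $\|Tw_k''\|\geq\delta/2$ for all $k$, so if the $T w_k''$ were, say, essentially disjointly supported in $Y$ we would get $\|Tv_r\|\not\to 0$ — but in general $Y$ is arbitrary, so this direct contradiction is not available. The correct move, following the scheme of \cite{AH}, is instead: the hypothesis "$\|Tz_n\|\to 0$ for every RIS" together with the basic inequality forces $T$ to kill certain averages, and one combines this with a skipped-block / gliding-hump argument on $(Tw_k'')$ in $Y$ to derive that in fact $\|Tw_k''\|\to 0$ as well.

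The main obstacle is precisely this last step: transferring the decay on RIS-averages back to decay on the original block sequence, when the target $Y$ carries no structure. The way around it is to run the argument contrapositively and locally — fix $\delta$, suppose $\|Tw_k''\|\geq\delta$ along a subsequence, apply Lemma \ref{norm} to locate witnessing $\gamma_k$, pass to a subsequence with constant or divergent weights, and build a length-$n_{j_0}$ (or length-$n_j$ with $j\to\infty$) average $v=n_{j_0}^{-1}\sum_{k\in J}w_k''$. Corollary \ref{basiceqcor} then gives $\|v\|\leq 10C/m_{j_0}$, which is small; but $v$ can simultaneously be arranged so that a single functional of weight $m_{j_0}$ (namely some $\overrightarrow{e_{\gamma}^*}$ obtained by "gluing" the $\gamma_k$ via a $\Delta$-element of the form $(\,\cdot\,,m_{j_0},p,\overrightarrow{b^*})$, as in the definition of $\Delta_{n+1}^1$) evaluates to roughly $\delta/2$ on $v$, forcing $\|v\|\gtrsim\delta m_{j_0}/(\text{const})$ — incompatible with $\|v\|\leq 10C/m_{j_0}$ once $m_{j_0}$ is large, while for the constant-weight case one uses the second (refined) estimate of Corollary \ref{basiceqcor}. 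Iterating over an exhaustive family of blocks yields a genuine RIS on which $T$ does not vanish, the desired contradiction. Thus the only delicate points are the bookkeeping in the basic-inequality induction (already granted by Proposition \ref{basiceq}) and the construction of the gluing $\gamma$, both of which mirror \cite{AH}.
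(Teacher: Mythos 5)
Your first half is exactly the paper's argument: split $w_k=w_k'+w_k''$ via Remark \ref{split}, observe that $\supp_{loc}(w_k')$ has no weight, apply Corollary \ref{RISXS} to get a $3C$-RIS subsequence of $(w_k')$, and invoke the hypothesis. (One small remark: the paper does not argue by contradiction at all; it simply shows every bounded block sequence has a subsequence along which $\|Tw_k\|\to 0$, which suffices.) The gap is entirely in your treatment of $(w_k'')$. You correctly notice that the contradiction-via-averages strategy cannot work when $Y$ is an arbitrary Banach space -- a lower bound $\|Tw_k''\|\geq\delta/2$ gives no lower bound on $\|Tv_r\|$ for averages $v_r$ of the $w_k''$, since the images may cancel in $Y$ -- but your proposed repair never reconnects to $T$: the final ``incompatibility'' you derive compares a lower and an upper bound for $\|v\|$ in $\mathcal{Z}$ alone, so even if it held it would prove the absurd statement that bounded block sequences do not exist. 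Moreover the numerics are backwards: a functional $\overrightarrow{e_{\gamma}}^*$ of weight $m_{j_0}$ glued from the witnessing $\gamma_k$'s (Proposition \ref{upper} or Lemma \ref{wnull}) evaluates on the average $n_{j_0}^{-1}\sum_{k\in J}w_k''$ to something of order $\delta/m_{j_0}$, not $\delta m_{j_0}$, and this is perfectly compatible with the upper bound $10C/m_{j_0}$ of Corollary \ref{basiceqcor} unless $\delta>40C$. So no contradiction is available by this route.

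The missing idea is a second, purely structural splitting of $w_k''$ rather than any averaging. For each threshold $N$ write $w_k''=w_k^N+y_k^N$, where $w_k^N$ retains exactly the coordinates $\overrightarrow{u}_{\gamma}$ with $w(\gamma)\leq m_N$ and $y_k^N$ those with $w(\gamma)>m_N$ (so $w_k^N=i_{q_k}(\overrightarrow{0}_{1,p_k},\overrightarrow{w}_k^N)$, etc.). A diagonal choice $N_j\to\infty$, exactly as in Proposition 5.11 of \cite{AH}, produces subsequences $(w_{k_{N_j}}^{N_j})_j$ and $(y_{k_{N_j}}^{N_j})_j$ that are themselves RIS: for the $y$-part the surviving coordinates have large weight, and for the $w$-part the weights are bounded by $m_{N_j}$ while the index $j_k$ in the RIS definition can be taken to grow faster. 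The hypothesis then applies \emph{directly} to each of the three pieces $w_k'$, $w_{k}^{N}$, $y_{k}^{N}$, and summing gives $\|Tw_{k_{N_j}}\|\to 0$. No contradiction, no averages, and no lower estimates on $\|T(\cdot)\|$ are ever needed.
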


\bp For (1) first we fix a bounded block sequence $(z_k)_{k\in\N}$ in $\mathcal{Z}$ and let $C>0$ such that $\|z_k\|\leq C$ for every $k\in\N$. It is enough to show that there exists a subsequence $(z_{k_i})_i$ such that $\|T(z_{k_i})\|\to 0$.  Let $p_k,q_k$ such that $\ran z_k=(p_k,q_k]$ and let $\overrightarrow{u}_k=\supp_{loc}(z_k)$. We split each element $z_k$ to $z_k=z'_k+z''_k$ with $\ran z'_k=\ran z''_k=\ran z_k$ as in Remark \ref{split}. It follows that both $(z'_k)_{n\in\N}$ and $(z''_k)_{k\in\N}$ are bounded and moreover by the definition of $z'_k$ we have that $\supp_{loc}(z'_k)$ has no weight. By Corollary \ref{RISXS} there exists a subsequence $(z'_k)_{k\in M}$ that is $3C$-RIS, hence by our hypothesis $(T(z'_k))_{k\in M}$ is norm null. For every $k\in M$ and $N\in\N$ we also split further the element $z''_k=w_k^N+y_k^N$ in a similar manner as in Proposition 5.11 in \cite{AH}. Namely, we define $\overrightarrow{w}_k^N,\overrightarrow{y}_k^N\in(\sum_{i=p_k+1}^{q_k}\oplus\ell_{\infty}(\Delta_i))_{\infty}$ such that $\overrightarrow{w}_k^N(\gamma)=\overrightarrow{u}_{\gamma}$ if $w(\gamma)\leq m_N$ or 0 otherwise , while $\overrightarrow{y}_k^N(\gamma)=\overrightarrow{u}_{\gamma}$ if $w(\gamma)> m_N$ or 0 otherwise. It follows that $\overrightarrow{w}_k^N+\overrightarrow{y}_k^N=(\overrightarrow{u}_{\gamma})_{\gamma}$ and we set $w_k^N=i_{q_k}(\overrightarrow{0}_{1,p_k},\overrightarrow{w}_k^N)$ and $y_k^N=i_{q_k}(\overrightarrow{0}_{1,p_k},\overrightarrow{y}_k^N)$. For the bounded block sequences $(w_k^N)_{k\in M}$, $(y_k^N)_{k\in M}$ we find subsequences $(w_{k_{N_j}}^{N_j})_j$, $(y_{k_{N_j}}^{N_j})_j$ that are RIS following a similar argument as in \cite{AH} (Proposition 5.11). Our hypothesis implies that $\|T(z''_{k_{N_j}})\|\to 0$ and since $\|T(z'_{k_{N_j}})\|\to 0$ we conclude that $\|T(z_{k_{N_j}})\|\to 0$.
\ep

\begin{corollary}
Let $\mathcal{Z}=(\sum_n\oplus X_n)_{AH}$ where $(X_n,\|\cdot\|_n)_{n\in\N}$ is a sequence of separable Banach spaces. Then the dual $\mathcal{Z}^*$ may be identified with $(\sum_{n=1}^{\infty}\oplus
  (X_n^*\oplus\ell_1(\Delta_n))_1)_1$.
\end{corollary}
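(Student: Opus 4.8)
The plan is to deduce the statement from Proposition~\ref{boundeness}. That proposition, together with the isomorphism $\Phi$ constructed just before it, shows that if the Schauder decomposition $(Z_n)_{n\in\N}$ of $\mathcal{Z}=(\sum_n\oplus X_n)_{AH}$ is shrinking, then $\mathcal{Z}^*$ is identified via $\Phi$ with $(\sum_{n=1}^{\infty}\oplus(X_n^*\oplus\ell_1(\Delta_n))_1)_1$. So the entire content of the corollary is the assertion that $(Z_n)_{n\in\N}$ is shrinking. I would reduce this in turn to the claim that every bounded block sequence (with respect to $(Z_n)_{n\in\N}$) is weakly null: if $(Z_n)_{n\in\N}$ were not shrinking there would be $f\in\mathcal{Z}^*$ and $\delta>0$ with $\|P_{(m,\infty)}^*f\|\ge\delta$ for infinitely many $m$, and a routine gliding hump argument (using $\|P_I\|<2C$ together with $P_{(m,q]}x\to P_{(m,\infty)}x$ as $q\to\infty$, which holds since $(Z_n)_{n\in\N}$ is a decomposition) would then produce a bounded block sequence $(w_k)_{k\in\N}$ with $\inf_k|f(w_k)|>0$, contradicting weak nullity.

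To establish that every bounded block sequence is weakly null, I would fix an arbitrary $f\in\mathcal{Z}^*$ and apply Proposition~\ref{RISblocking} with $Y=\R$ and $T=f$. That reduces matters to showing $f(z_k)\to 0$ for every RIS $(z_k)_{k\in\N}$ in $\mathcal{Z}$; since $f$ was arbitrary, it therefore suffices to prove that every RIS in $\mathcal{Z}$ is weakly null. Assume, towards a contradiction, that $(z_k)_{k\in\N}$ is a $C$-RIS with parameters $(j_k)_{k\in\N}$, that $f\in\mathcal{Z}^*$ and $\delta>0$, and that (after passing to a subsequence) $|f(z_k)|\ge\delta$ for all $k$. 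Pick signs $\e_k\in\{-1,1\}$ with $\e_k f(z_k)=|f(z_k)|$. Since the defining conditions of an RIS involve only $\|z_k\|$ and the numbers $|\overrightarrow{e_\gamma^*}(z_k)|$, both unchanged under a sign flip, $(\e_k z_k)_{k\in\N}$ is again a $C$-RIS with the same parameters $(j_k)_{k\in\N}$.

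Now fix $j_0\in\N$. Because $(j_k)_{k\in\N}$ is strictly increasing we may pick $N$ with $j_N\ge j_0$ and consider the finite $C$-RIS $(\e_k z_k)_{k=N}^{N+n_{j_0}-1}$, which has length $n_{j_0}$; Corollary~\ref{basiceqcor} then yields $\|n_{j_0}^{-1}\sum_{k=N}^{N+n_{j_0}-1}\e_k z_k\|\le 10C/m_{j_0}$. On the other hand, evaluating $f$ on this average gives $n_{j_0}^{-1}\sum_{k=N}^{N+n_{j_0}-1}|f(z_k)|\ge\delta$, whence $\delta\le 10C\|f\|/m_{j_0}$. Since $j_0$ was arbitrary and $m_{j_0}\to\infty$, this is impossible, so every RIS is weakly null, $(Z_n)_{n\in\N}$ is shrinking, and Proposition~\ref{boundeness} completes the proof. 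The step requiring the most care is the application of Corollary~\ref{basiceqcor}: one must make sure that a tail segment of an arbitrary RIS, truncated to length $n_{j_0}$, genuinely satisfies the hypotheses of that corollary — this is precisely why one chooses the starting index $N$ so that every parameter appearing in the segment is $\ge j_0$ — and the gliding hump characterization of shrinkingness used in the first paragraph, while standard, should likewise be carried out explicitly.
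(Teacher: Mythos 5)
Your proposal is correct and follows exactly the paper's route: the paper's proof likewise deduces from Corollary~\ref{basiceqcor} that every RIS is weakly null, invokes Proposition~\ref{RISblocking} (with a functional as the operator) to get that every bounded block sequence is weakly null, concludes that $(Z_n)_{n\in\N}$ is shrinking, and finishes with Proposition~\ref{boundeness}. You have merely written out in full the sign-flipping/averaging argument and the gliding-hump reduction that the paper leaves implicit.
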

\bp By Corollary \ref{basiceqcor} we observe that every RIS is weakly null. By Proposition \ref{RISblocking} we conclude that every bounded block sequence
in $\mathcal{Z}$ is weakly null and thus the
  decomposition $(Z_n)_{n\in\N}$ is shrinking. Proposition \ref{boundeness} yields the result.
\ep

\section{The HI-property in block Subspaces of $\mathcal{Z}=(\sum_nX_n)_{AH}$}

In this section we will define the basic features that can be found in many HI constructions ( see \cite{AH}, \cite{AF}, also \cite{argetall}). As we already noted we follow the HI method of construction of Argyros-Haydon presented in \cite{AH}. The adaptation of the arguments is made without validating their precise estimations and we can observe by the definition of the sets $\Delta_n$ that in the case where $X_n=\{0\}$ for every $n\in\N$ the space $\mathcal{Z}=(\sum\oplus X_n)_{AH}$ coincides with the Argyros Haydon space $\mathfrak{X}_K$. Therefore for any arbitrary choice $(X_n,\|\cdot\|_n)_{n\in\N}$, the space $\mathcal{Z}$ will always contain the space $\mathfrak{X}_K$ and thus a $\mathcal{L}_{\infty}$ HI subspace. However as we will see the HI property is satisfied in every block subspace of $\mathcal{Z}$ which reveals the the influence of an HI external norm (see \cite{AF}, \cite{AR}).

We start by recalling the definition of Hereditarily indecomposable (HI) spaces.
\begin{definition}
We say that a Banach space $X$ is Hereditarily Indecomposable if  every closed subspace $Y$ of $X$ is indecomposable i.e. there do not exists $W,Z$ infinite dimensional closed subspaces of $Y$ such that $Y=W\oplus Z$.
\end{definition}
It is known (see \cite{AF},\cite{argetall},\cite{AH},\cite{AT}) that a Banach space $X$ is HI if and only if for every pair of infinite dimensional closed subspaces $Y,Z$ of $X$ and every $\e>0$ there exist $y\in Y$, $z\in Z$ such that $\|y+z\|>1$ while $\|y-z\|<\e$.

We continue by giving the definition of
$C-\ell_1^n$-averages.
\begin{definition}
Let $C>1$ and $n\in\N$. We say that a vector $z\in \mathcal{Z}$ is a
$C-\ell_1^n$ average if
\begin{enumerate}
\item $\|z\|\geq 1$ \item There exists a block sequence
$(z_i)_{i=1}^n$ in $Z$, with $\|z_i\|\leq C$, for all
$i=1,2,\ldots,n$ such that $z=\frac{1}{n}\sum_{i=1}^nz_i$.
\end{enumerate}
\end{definition}

The proof of the existence of $\ell^1$ averages in $\mathcal{Z}$ demands some a further study of the space.

\bpr\label{upper} Let $(z_k)_{n\in\N}$ be a bounded block sequence in $\mathcal{Z}$. Then, for every $j\in\N$ there exists $(z_{k_i})_{i=1}^{n_{2j}}$ such that
$2j\leq\max\ran z_{k_1}$ and an element $\gamma\in\Gamma$ of weight
$w(\gamma)=m_{2j}$ such that
\[|\overrightarrow{e^*_{\gamma}}(\sum_{i=1}^{n_{2j}}z_{k_i})|\geq\frac{1}{2m_{2j}}\sum_{k=1}^{n_{2j}}\|z_{k_i}\|\]
\epr

\bp Fix $j\in\N$ and let $p_k<l_k< p_{k+1}<l_{k+1}<\ldots$ such that $\ran z_k\subset(p_k,l_k]$.
Using Lemma \ref{norm} for every $k$, we can find
$\xi_k\in\Delta_{q_k}$ with $q_k\geq p_k+1$ such that
$\overrightarrow{e_{\xi_k}}^*(z_k)\geq\frac{1}{2}\|z_k\|$. We set $\overrightarrow{b}_k^*=(\overrightarrow{0}_{p_k+1,q_k-1},\overrightarrow{e_{\xi_k}}^*)$ and observe that $b_k^*\in B_{q_k,p_k}$ and $P_{(p_k,q_k]}^*\overrightarrow{b}_k^*(z_k)=\overrightarrow{e_{\xi_k}}^*(z_k)$ for all $k$. Let $k_1\in\N$ such that $2j\leq\max\ran z_{k_1}$ and $\eta_{k_1}=(q_{k_1}+1, m_{2j},\overrightarrow{b}_{k_1}^*)$. Assume that for some $1<i<n_{2j}$ the elements $z_{k_l}$, $\eta_{k_l}$ have been defined for every $l<i$. We choose $k_i$ such that $\rank\eta_{k_{i-1}} <p_{k_i}$ and let $\eta_{k_i}=(q_{k_i}+1,\eta_{k_i-1},m_{2j},\overrightarrow{b}_{k_i}^*)$. Observe that
that $d_{\eta_{k_i}}^*(z_{k_l})=0$ for every $i,l$. Let $\gamma\in\Gamma$ with $w(\gamma)=m_{2j}$ and evaluation analysis
$\{p_{k_i},q_{k_i},\eta_{k_i},\overrightarrow{b}_{k_i}^*\}_{i=1}^{n_{2j}}$. An immediate computation yields that
$\overrightarrow{e^*_{\gamma}}(\sum_{i=1}^{n_{2j}}z_{k_i})=
\frac{1}{m_{2j}}\sum_{i=1}^{n_{2j}}\overrightarrow{e_{\xi}}_{k_i}^*(z_{k_i})$ and hence $\gamma$ satisfies the conclusion.
\ep

As in \cite{AH} (Lemma 8.2), the above result in conjunction with a standard argument presented in Lemma 2.2 of \cite{AH} yields the following:
\begin{lemma}\label{average}
Let $Y$ be a block subspace of $\mathcal{Z}$. Then for every $C>1$ and $n\in\N$, $Y$ contains
$C-\ell_1^n$ average.
\end{lemma}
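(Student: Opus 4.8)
The plan is to deduce Lemma~\ref{average} from Proposition~\ref{upper} exactly along the lines of Lemma 8.2 and Lemma 2.2 of \cite{AH}. Fix a block subspace $Y$ of $\mathcal{Z}$, a constant $C>1$ and $n\in\N$, and choose $j\in\N$ with $m_{2j}^{1/2}>C/(C-1)$ (so in particular $n_{2j}$ is huge compared to $n$) together with $n_{2j}\geq n$. The idea is the standard dichotomy argument: inside any infinite-dimensional block subspace either there is some $n$-block-average with small norm (which forces an $\ell_1^n$-average of the desired constant), or else all such averages are large, and iterating this downward along scales of block subspaces of $Y$ eventually contradicts Proposition~\ref{upper}.

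First I would set up the iteration. Working entirely inside $Y$, consider normalized block vectors of the form $\frac{1}{n}\sum_{i=1}^{n} y_i$ with $y_i$ successive blocks in $Y$. If some such vector $z$ has $\|z\|\le \frac{1}{C'}$ (for a suitable $C'$), then rescaling gives a $C$-$\ell_1^n$-average in $Y$ and we are done. Otherwise every average of $n$ successive normalized blocks in $Y$ has norm $>\frac{1}{C'}$; applying this repeatedly we build, for a large integer $N=n^{\,r}$ (with $r$ chosen so that $n_{2j}\le N$), a normalized block vector $z^\ast$ in $Y$ which is an iterated average of $N$ successive normalized blocks $w_1<\dots<w_N$, and the "every average is large" hypothesis propagates up the tree so that at the top level one gets a lower bound of the form $\|\,\frac{1}{N}\sum_{t\in J} w_t\,\|\ge (C')^{-r}\,\frac{|J|}{N}$ for every interval $J$, i.e.\ the $w_t$ behave like an $\ell_\infty^N$-basis up to a constant. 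This is precisely the quantitative statement established in Lemma~2.2 of \cite{AH}.

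Next I would apply Proposition~\ref{upper} to the bounded block sequence $(w_t)_t$ (bounded by $1$): it produces $n_{2j}$ of the $w_t$'s and a $\gamma\in\Gamma$ with $w(\gamma)=m_{2j}$ such that
\[
|\overrightarrow{e^*_{\gamma}}(\textstyle\sum_{t}w_t)|\ \ge\ \frac{1}{2m_{2j}}\sum_t\|w_t\|\ \ge\ \frac{n_{2j}}{2m_{2j}}.
\]
Since $\overrightarrow{e^*_{\gamma}}$ extends to a functional of norm $\le 3$ on $\mathcal{Z}$ (Notation~\ref{restrict}, the bound $\|d_\gamma^*\|\le 3$), this forces $\|\sum_t w_t\|\ge \frac{n_{2j}}{6m_{2j}}$, hence the corresponding $n_{2j}$-term average of the $w_t$'s inside $Y$ has norm at least $\frac{1}{6m_{2j}}$ — but this is \emph{small} once $m_{2j}$ was chosen large, which is exactly the case that yields a $C$-$\ell_1^n$-average after rescaling (the average of $n_{2j}\ge n$ blocks bounded by $1$ whose norm is $\ge c$ contains, by a further averaging/Hahn--Banach step as in \cite{AH}, a $C$-$\ell_1^n$-average). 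In either branch of the dichotomy we obtain the required vector in $Y$.

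The main obstacle is the bookkeeping of constants in the iteration: one must choose $j$ (hence $m_{2j}$, $n_{2j}$) and the depth $r$ of the averaging tree so that the lower bound $(C')^{-r}$ coming from the "all averages large" branch is still incompatible with the upper bound $\frac{1}{6m_{2j}}$ forced by Proposition~\ref{upper}, while simultaneously the surviving $n$-term average is small enough to be rescaled into a genuine $C$-$\ell_1^n$-average. This is the content of Lemma~2.2 of \cite{AH}, and since our $\overrightarrow{e^*_\gamma}$'s play the role of the weighted functionals there and satisfy the same norm bounds, the same numerical choices go through verbatim; I would simply cite that lemma for the combinatorial core and only verify that Proposition~\ref{upper} supplies the one space-specific input it needs.
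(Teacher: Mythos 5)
Your overall plan --- combine Proposition~\ref{upper} with Lemma 2.2 of \cite{AH} --- is exactly what the paper does (its proof is a one-line citation of precisely these two ingredients), but the way you have set up the dichotomy reverses the key inequalities, and as written the argument never reaches a contradiction. A $C$-$\ell_1^n$-average is a vector $z=\frac{1}{n}\sum_{i=1}^n z_i$ with $\|z_i\|\le C$ and $\|z\|\ge 1$; starting from \emph{normalized} successive blocks $y_1<\dots<y_n$ in $Y$, you obtain one by rescaling precisely when $\|\frac{1}{n}\sum_i y_i\|\ge 1/C$, i.e.\ when the average is \emph{large}. Your first branch (``$\|z\|\le 1/C'$ implies a $C$-$\ell_1^n$-average after rescaling'') is therefore backwards: rescaling a small average up to norm one blows the summands up to norm $\ge C'$, which is the wrong direction. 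Consequently the ``otherwise'' branch is also inverted: the hypothesis that propagates through the averaging tree in Lemma 2.2 of \cite{AH} is that \emph{every} $n$-term average of successive normalized blocks has norm $<1/C$, and what that lemma yields is the \emph{upper} bound $\|\frac{1}{N}\sum_{t=1}^{N} w_t\|<C^{-r}$ for $N=n^{r}$ successive normalized blocks, not the lower bound $\ge (C')^{-r}|J|/N$ you state (which would be $\ell_1$-like behaviour and would contradict nothing).

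With the inequalities the right way round the proof closes as you intend: assuming $Y$ contains no $C$-$\ell_1^n$-average, every iterated average is small; on the other hand Proposition~\ref{upper} applied to a normalized block sequence in $Y$ produces $n_{2j}$ terms and $\gamma$ with $w(\gamma)=m_{2j}$ such that the average of those terms has norm at least $\frac{1}{2m_{2j}}$ (note that $\|\overrightarrow{e_{\gamma}}^*\|\le 1$ on $\mathcal{Z}$ by Notation~\ref{restrict}; the bound $3$ you quote is for $d_\gamma^*$, so your $\frac{1}{6m_{2j}}$ can be improved, though this is harmless). Choosing $j$ so that some $r$ satisfies $C^{r}>2m_{2j}$ and $n^{r}\le n_{2j}$ (possible by Assumption 2.3 of \cite{AH}) makes the two bounds incompatible. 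Your closing paragraph gestures at this, but because you describe $\frac{1}{6m_{2j}}$ as ``small'' and again attribute the existence of the $\ell_1^n$-average to smallness, the logical role of each estimate --- upper bound from the no-average assumption versus lower bound from Proposition~\ref{upper} --- is inverted throughout; as literally written your argument produces two lower bounds and no contradiction.
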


Next we generalise the result of Proposition \ref{upper} concerning weakly null sequences as it will be useful in next section.
\begin{lemma}\label{wnull}
Let $(z_k)_k$ be weakly null sequence in $\mathcal{Z}$ and assume that there exists a sequence of successive intervals of $\N$, $(J_k)_{k}$ such that $\|P_{J_k}(z_k)\|\geq \delta$. Then for every $j\in\N$ there exist elements $(z_{k_i})_{i=1}^{n_{2j}}$ with
$2j\leq\max\ran z_{k_1}$ and $\gamma\in\Gamma$ of weight
$m_{2j}$ such that
$|\overrightarrow{e^*_{\gamma}}(\sum_{i=1}^{n_{2j}}z_{k_i})|\geq\frac{\delta}{4m_{2j}}$.
\end{lemma}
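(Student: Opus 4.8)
The plan is to mimic the proof of Proposition \ref{upper}, but since the sequence $(z_k)_k$ is only weakly null rather than block, I must first extract a block-like subsequence before running the same argument that produces a single $\gamma\in\Gamma$ of weight $m_{2j}$ whose evaluation analysis picks out one functional $\overrightarrow{e_{\xi_{k_i}}}^*$ from each chosen vector. So the first step is a standard gliding hump / small perturbation: since $(z_k)_k$ is weakly null, passing to a subsequence I may assume that the ``essential part'' of each $z_k$ lives on a successive block of the decomposition $(Z_n)_{n\in\N}$. More precisely, using that $\|P_{J_k}(z_k)\|\ge\delta$ and that $(z_k)$ is weakly null, I would find successive intervals $I_k$ of $\N$ with $I_k\subset J_k$ (after passing to a subsequence) and vectors $w_k=P_{I_k}(z_k)$ with $\ran w_k=I_k$, $I_1<I_2<\dots$, and $\|w_k\|\ge\delta/2$ (the weak nullity kills the tail contributions of the $P_{J_k}z_k$ outside a finite window; a diagonal argument makes the ranges successive). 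Here I should be a little careful that $\|P_{J_k}z_k\|\ge\delta$ together with $\|P_{J}\|<2C$ and weak nullity indeed allow chopping $J_k$ down to a genuinely successive block.

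Once I have a bounded block sequence $(w_k)_k$ with $\|w_k\|\ge\delta/2$, the second step is exactly Lemma \ref{norm} applied to each $w_k$: for each $k$ there is $\xi_k\in\Delta_{q_k}$ with $\rank\xi_k>\min I_k$ and $|\overrightarrow{e_{\xi_k}}^*(w_k)|>\frac12\|w_k\|\ge\frac{\delta}{4}$. Then I repeat verbatim the inductive construction in the proof of Proposition \ref{upper}: set $\overrightarrow{b}_k^*=(\overrightarrow{0},\overrightarrow{e_{\xi_k}}^*)\in B_{q_k,p_k}$, build the chain $\eta_{k_1},\dots,\eta_{k_{n_{2j}}}$ of elements of $\Delta^1$ with $w(\eta_{k_i})=m_{2j}$ linked via the $\sigma$-coding, choosing the indices $k_i$ sparsely so that $\rank\eta_{k_{i-1}}<\min I_{k_i}$, and with $2j\le\max\ran z_{k_1}$. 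This yields $\gamma\in\Gamma$ with $w(\gamma)=m_{2j}$ whose evaluation analysis is $\{p_{k_i},q_{k_i},\eta_{k_i},\overrightarrow{b}_{k_i}^*\}_{i=1}^{n_{2j}}$, and $d_{\eta_{k_i}}^*(w_{k_l})=0$ for all $i,l$ by the successive-ranges choice.

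The third step is the evaluation: by Proposition \ref{evalanal},
\[
\overrightarrow{e_\gamma^*}\Big(\sum_{i=1}^{n_{2j}}w_{k_i}\Big)=\frac{1}{m_{2j}}\sum_{i=1}^{n_{2j}}\overrightarrow{b}_{k_i}^*\big(P_{(p_{k_i},q_{k_i}]}w_{k_i}\big)=\frac{1}{m_{2j}}\sum_{i=1}^{n_{2j}}\overrightarrow{e_{\xi_{k_i}}}^*(w_{k_i}),
\]
and since each summand is (after adjusting signs, which is harmless because $\xi_k$ can be replaced using symmetry of the norming sets, or simply by noting we only need the absolute value) at least $\delta/4$ in modulus, we get $|\overrightarrow{e_\gamma^*}(\sum_i w_{k_i})|\ge \frac{n_{2j}}{m_{2j}}\cdot\frac{\delta}{4}$. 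Finally I need to pass back from $w_{k_i}$ to $z_{k_i}$: since $w_{k_i}=P_{I_{k_i}}z_{k_i}$ and $\overrightarrow{e_{\xi_{k_i}}}^*$ is supported in $\Delta_{q_{k_i}}$ with $q_{k_i}\in I_{k_i}$, we have $\overrightarrow{e_{\xi_{k_i}}}^*(z_{k_i})=\overrightarrow{e_{\xi_{k_i}}}^*(w_{k_i})$, so the same $\gamma$ works for the original vectors, giving $|\overrightarrow{e^*_\gamma}(\sum_{i=1}^{n_{2j}}z_{k_i})|\ge\frac{n_{2j}\delta}{4m_{2j}}$, which is stronger than the claimed $\frac{\delta}{4m_{2j}}$ (presumably the statement's normalization is a typo for the average, or one simply notes $n_{2j}\ge 1$).

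The main obstacle is the very first step: turning the mere inequality $\|P_{J_k}z_k\|\ge\delta$ on a weakly null sequence into a genuinely block sequence with successive ranges inside (sub)intervals of the $J_k$, while keeping norm bounded below. This is where weak nullity is used essentially — a straightforward sliding-hump diagonalization does it, but one must check that the contributions of $z_k$ on $Z_n$ for $n$ in a fixed finite set tend to $0$ (true by weak nullity, since each $P_{\{n\}}^*\circ(\text{any functional})$ is a fixed functional) so that after a perturbation the relevant mass of $P_{J_k}z_k$ concentrates on a tail interval, and these tail intervals can be made successive. After that, everything is a direct transcription of Proposition \ref{upper}.
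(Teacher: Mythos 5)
Your strategy---reduce to a block sequence and quote Proposition \ref{upper} verbatim---breaks down at the transfer step, and this is a genuine gap rather than a technicality. (The extraction you present as the main obstacle is actually free: the intervals $J_k$ are given as successive, $\ran(P_{J_k}z_k)\subseteq J_k$ and $\|P_{J_k}z_k\|\geq\delta$, so $w_k=P_{J_k}z_k$ is already a bounded block sequence and no gliding hump is needed.) The real problem is your final claim that the $\gamma$ produced for $\sum_i w_{k_i}$ also works for $\sum_i z_{k_i}$ because $\overrightarrow{e_{\xi_{k_i}}}^*(z_{k_i})=\overrightarrow{e_{\xi_{k_i}}}^*(w_{k_i})$. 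In a Bourgain--Delbaen space the coordinate functionals do not commute with the projections: for $\rank\xi\in(p,q]$ one has $\overrightarrow{e_{\xi}}^*(P_{(p,q]}z)=\overrightarrow{e_{\xi}}^*(z)-\overrightarrow{e_{\xi}}^*(i_pR_{[1,p]}z)$, and the subtracted term need not vanish when $z$ is not supported above $p$. Worse, even granting that identity, the evaluation analysis of $\gamma$ applied to $z_{k_i}$ contains the terms $d_{\eta_{k_l}}^*(z_{k_i})$ and $\overrightarrow{b}_{k_l}^*\bigl(P_{(p_{k_l},q_{k_l}]}z_{k_i}\bigr)$ for $l\neq i$; these vanish for the block vectors $w_{k_i}$ but not for the original $z_{k_i}$, whose ranges overlap all the intervals $(p_{k_l},q_{k_l}]$. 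These uncontrolled cross terms can cancel the contribution $\frac{1}{m_{2j}}\overrightarrow{e_{\xi_{k_i}}}^*(P_{J_{k_i}}z_{k_i})$ you are counting on.

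A symptom of the gap is that your argument never uses weak nullness where it is indispensable. The paper's proof keeps the vectors $z_{k_i}$ themselves and writes, for each $i$, $\overrightarrow{e_{\gamma}}^*(z_{k_i})=\overrightarrow{e_{\eta_{k_{i-1}}}}^*(z_{k_i})+\frac{1}{m_{2j}}\overrightarrow{b}_{k_i}^*(P_{J_{k_i}}z_{k_i})$: all history terms ($l<i$) are bundled into the single, already fixed functional $\overrightarrow{e_{\eta_{k_{i-1}}}}^*$, and weak nullness is invoked at each inductive step to choose $k_i$ so late that $|\overrightarrow{e_{\eta_{k_{i-1}}}}^*(z_{k_i})|<\frac{\delta}{4m_{2j}}$, while the future terms are killed by taking $p_{k_l}$ beyond $\max\ran z_{k_i}$. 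If weak nullness is dropped the statement fails (e.g.\ $z_k=z_0+u_k$ with $u_k$ block and $z_0$ fixed allows the history terms to cancel the main ones), so any correct proof must use it at the evaluation stage, not merely to manufacture a block sequence. Your side remark that the right-hand side should read $\frac{n_{2j}\delta}{4m_{2j}}$ is correct and is indeed the estimate used later in Lemma \ref{isomorph}.
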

\bp
Let $j\in\N$ and assume that $\ran z_k\subset[1,l_k]$ where $l_k\geq\max J_k$ for every $k\in\N$. Let $\xi_k\in\Delta_{q_k}$ with $q_k\geq \min J_k$ (Lemma \ref{norm}) such that
$\overrightarrow{e_{\xi_k}}^*(P_{J_k}z_k)\geq\frac{\delta}{2}$. We set $p_k=\min J_k-1$  and $\overrightarrow{b}_k^*=(\overrightarrow{0}_{p_k+1,q_k-1},\overrightarrow{e_{\xi_k}}^*)\in B_{q_k,p_k}$ for every $k$. We choose inductively $z_{k_i}$, $\eta_{k_i}$ as in the proof of Proposition \ref{upper} and we additionally require in each inductive step $r=i+1>1$ that the element $z_{k_r}$ satisfies that
$|\overrightarrow{e_{\eta}}_{k_i}^*(z_{k_r})|<\frac{\delta}{4m_{2j}}$. Since $(z_k)_k$ is weakly null such a choice is possible. Let $\gamma\in\Gamma$ with $w(\gamma)=m_{2j}$ and evaluation analysis
$\{p_{k_i},q_{k_i},\eta_{k_i},\overrightarrow{b}_{k_i}^*\}_{i=1}^{n_{2j}}$. A simple observation is that for each $1\leq i\leq n_{2j}$,  $\overrightarrow{e_{\gamma}}^*(z_{k_i})=\overrightarrow{e_{\eta}}_{k_{i-1}}^*(z_{k_i})+\frac{1}{m_{2j}}\overrightarrow{b}^*_{k_i}(P_{J_{k_i}}z_{k_i})
\geq\frac{\delta}{4m_{2j}}$. We conclude that
\[|\overrightarrow{e^*_{\gamma}}(\sum_{i=1}^{n_{2j}}z_{k_i})|\geq\frac{\delta}{4m_{2j}},\] as promised.
\ep

We recall the definition of exact pairs.
\begin{definition}\label{40}
Let $C>0$, $\e\in\{0,1\}$, and $j\in\N$. A pair
$(z,\gamma)\in \mathcal{Z}\times \Gamma$
is said to be $(C,j,\e)$ exact pair if the following
conditions are fulfilled:
\begin{enumerate}
\item $w(\gamma)=m_j$, $\|z\|\leq
C$, $\overrightarrow{e_{\gamma}}^*(z)=\e$.
\item $d_{\xi}^*(z)\leq\frac{C}{m_j}$ for every $\xi\in\Gamma$.
 \item If
$\gamma'\in\Gamma$ with $w(\gamma')=m_i$ and $i\neq j$,
then \[|\overrightarrow{e_{\gamma'}}^*(z)|\leq\begin{cases} Cm_i^{-1}\ \ \text{ if}\
i<j \\ Cm_j^{-1}\ \text{ if}\ i>j.\end{cases}\]
\end{enumerate}
\end{definition}

The next results of this section are similar to corresponding ones in \cite{AH}. We shall include slight description of the basic steps followed in the proofs for sake of completeness. We start with the following lemma that shares the same arguments with Proposition 8.6 in \cite{AH}.

\begin{lemma}\label{exactness} Let $Y$ be a block subspace of $\mathcal{Z}$. Then, for every $j\in\N$ there exists a $(65,2j,1)$ exact pair $(z,\eta)$ in $Y$.
\end{lemma}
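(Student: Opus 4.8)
The plan is to build the exact pair by following the Argyros--Haydon recipe for producing a $(C,2j,1)$ exact pair out of a long enough $\ell_1^{n}$-average, adapted to the $\mathcal{Z}$-setting via Lemma \ref{average} and Proposition \ref{upper}. First I would fix the block subspace $Y$ and $j\in\N$, and use Lemma \ref{average} to extract from $Y$ a block sequence $(y_i)_{i=1}^{n_{2j}}$ of $C_0$-$\ell_1^{n_{2j}}$ averages for a suitable fixed constant $C_0$ (say $C_0=2$), each of norm close to $1$; rescaling we may assume $\|y_i\|$ is (essentially) $1$. I would then apply Proposition \ref{upper} to this block sequence: it produces a further subsequence, which after relabelling I still call $(y_i)_{i=1}^{n_{2j}}$ together with a $\gamma=\eta\in\Gamma$ of weight $w(\eta)=m_{2j}$ such that $|\overrightarrow{e_\eta^*}(\sum_{i=1}^{n_{2j}}y_i)|\geq \frac{1}{2m_{2j}}\sum_i\|y_i\|\geq \frac{n_{2j}}{2m_{2j}}(1-\text{small})$; multiplying by an appropriate unimodular scalar arranges that this evaluation is positive.

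Next I would set $z=\frac{m_{2j}}{n_{2j}}\sum_{i=1}^{n_{2j}}y_i$ and verify the three defining conditions of a $(C,2j,1)$ exact pair, taking $C=65$ as dictated by the constants in the basic inequality. For condition (1): the evaluation $\overrightarrow{e_\eta^*}(z)$ equals $\frac{m_{2j}}{n_{2j}}$ times the quantity from Proposition \ref{upper}'s evaluation analysis, which by the explicit computation there is exactly $\frac{1}{m_{2j}}\sum_i\overrightarrow{e_{\xi_{k_i}}^*}(y_i)$; choosing the averages and $\xi$'s so that this sum is $n_{2j}/m_{2j}$ forces $\overrightarrow{e_\eta^*}(z)=1$ (here one must be slightly careful: the clean value $\e=1$ is obtained by a perturbation/normalisation argument exactly as in \cite{AH}, possibly replacing $y_i$ by scalar multiples so the partial evaluations add up correctly). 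The norm bound $\|z\|\leq 65$ follows because each $y_i$ is an $\ell_1^{n_{2j}}$-average, hence (by the standard argument, e.g. using Corollary \ref{basiceqcor} applied to the block pieces of the $y_i$, or the RIS-like estimate) the rescaled sum $\frac{m_{2j}}{n_{2j}}\sum y_i$ has norm controlled by a universal constant. For condition (2), $d_\xi^*(z)\leq C/m_{2j}$: since $z=\frac{m_{2j}}{n_{2j}}\sum y_i$ and each $\|d_\xi^*\|\leq 3$ while $d_\xi^*$ is supported on a single $Z_n$, at most one $y_i$ contributes, giving $|d_\xi^*(z)|\leq 3\frac{m_{2j}}{n_{2j}}\|y_i\|$, which is $\leq C/m_{2j}$ since $n_{2j}\geq m_{2j}^2$. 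For condition (3), the estimates on $|\overrightarrow{e_{\gamma'}^*}(z)|$ for $w(\gamma')=m_i$, $i\neq 2j$: this is precisely where the basic inequality (Proposition \ref{basiceq}) and its corollary do the work — the block pieces of the $y_i$ form, after passing to subsequences, an RIS, and the mixed-Tsirelson estimates in $W[(\mathcal{A}_{3n_j},1/m_j)_j]$ give $|\overrightarrow{e_{\gamma'}^*}(z)|\leq Cm_i^{-1}$ for $i<2j$ and $\leq Cm_{2j}^{-1}$ for $i>2j$, with the "moreover" part of Proposition \ref{basiceq} handling the case $i=2j$ (other than $\gamma'=\eta$).

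The main obstacle, and the point requiring the most care, is reconciling the $\ell_1^n$-average structure with the RIS machinery: the vectors $y_i$ produced by Lemma \ref{average} are averages of blocks, and to apply the basic inequality one needs their constituent blocks — or rather appropriate "local-support-without-weight'' pieces and the complementary weighted pieces, as in Remark \ref{split} and the proof of Proposition \ref{RISblocking} — to be arranged into an RIS after suitable thinning. One must therefore interleave the construction: choose each $y_i$ only after the previous $\xi_{k_l},\eta_{k_l}$ are fixed, with rank gaps large enough that (a) the $d_{\eta_{k_i}}^*$ vanish on earlier $y_l$ (as in Proposition \ref{upper}), (b) condition (iii) of the RIS definition holds for the block constituents, and (c) the weights $w(\xi_{k_i})$ satisfy the $\sigma$-coding constraint forced by the definition of $\Delta_{n+1}^1$ (the last clause, involving $w(\xi)=m_{4\sigma(\eta)}$ — actually here we are in the even case $m_{2j}$ so the relevant clauses are the second and third of $\Delta_{n+1}^1$, which do not impose the coding restriction, making this slightly easier than the odd-weight case). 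Once these bookkeeping choices are made, every estimate is a direct transcription of the corresponding step in \cite{AH} Proposition 8.6, and the constant $65$ comes out of the same arithmetic.
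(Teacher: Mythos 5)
Your overall skeleton (produce $\ell_1$-averages in $Y$, feed a family of them into Proposition \ref{upper} to get $\eta$ of weight $m_{2j}$, rescale by a scalar of modulus at most $2$ to force $\overrightarrow{e_{\eta}}^*(z)=1$, then invoke the basic inequality for conditions (1) and (3)) is the same as the paper's. But there is a genuine gap at the step you yourself flag as ``the main obstacle,'' and your proposed resolution is not the right one. You take all the averages $y_i$ to be $C_0$-$\ell_1^{n_{2j}}$-averages of the \emph{same} length, and you then propose to obtain the RIS structure needed for Proposition \ref{basiceq} from the \emph{constituent blocks} of the $y_i$. This does not work: the constituent blocks of an $\ell_1^{n}$-average are merely a bounded block sequence, and a bounded block sequence is not an RIS (condition (3) of the RIS definition is exactly what fails); the splitting device of Remark \ref{split} and Proposition \ref{RISblocking} that upgrades bounded blocks to RIS pieces destroys the average structure and the control of the coefficients, and Corollary \ref{basiceqcor} is calibrated for exactly $n_{j_0}$ terms of an RIS, not for the $n_{2j}^2$ constituent pieces with coefficients $m_{2j}/n_{2j}^2$. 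Without the sequence that you sum actually being an RIS, neither the norm bound $\|z\|\leq 65$ in condition (1) nor the weight estimates in condition (3) can be extracted from Proposition \ref{basiceq}, whose hypothesis is precisely that $(z_k)$ is a $C$-RIS.

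The missing idea is the analogue of Lemma 8.4 of \cite{AH}, which is what the paper uses: one must take the averages with \emph{strictly increasing lengths}, i.e.\ choose an increasing sequence $(j_k)$ and let $z_k$ be a $C$-$\ell_1^{n_{j_k}}$-average in $Y$ with $j_{k+1}>\max\ran z_k$; such a sequence of averages is itself a $2C$-RIS. Then Proposition \ref{upper} is applied to this RIS to locate $\eta$ with $|\overrightarrow{e_{\eta}}^*(\sum_{k=1}^{n_{2j}}z_k)|\geq n_{2j}/(4m_{2j})$, one sets $z=\theta\, m_{2j}n_{2j}^{-1}\sum_{k=1}^{n_{2j}}z_k$ with $|\theta|\leq 2$ to normalize the evaluation to $1$, and the basic inequality applied to the RIS $(z_k)$ itself delivers the norm bound and condition (3), giving a $(32C,2j,1)$ exact pair for every $C>1$, hence a $(65,2j,1)$ exact pair. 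Your verification of condition (2) via $\|d_{\xi}^*\|\leq 3$ and $n_{2j}\geq m_{2j}^2$, and your remarks about the normalization and about the even-weight elements not being subject to the $\sigma$-coding restriction, are fine; the proof is repaired once the equal-length averages are replaced by averages of increasing lengths and the RIS machinery is applied to the averages rather than to their pieces.
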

\bp Let $(j_k)_{k\in\N}$ be an increasing sequence of natural numbers and $C>1$. Lemma \ref{average} implies that for each $k\in\N$ there exists a
C-$\ell_1^{n_{j_k}}$ average $z_k$ in $Y$.
The corresponding analogue of Lemma 8.4 in \cite{AH} allows as to assume that $(z_k)_{k\in\N}$ is $2C$-RIS (passing to a subsequence). We note that $\|z_k\|\geq 1$ for every $k\in\N$ and also for $j\in\N$ Proposition \ref{upper} yields that there exists a subsequence denoted by $(z_k)_{k\in\N}$ again and $\eta\in\Gamma$ of $w(\eta)=m_{2j}$ such
that $|\overrightarrow{e_{\eta}}^*(\sum_{k=1}^{n_{2j}}z_k)|\geq\frac{n_{2j}}{4m_{2j}}$.

For a suitable $\theta\in\R$ with $|\theta|\leq 2$ we can have that $\overrightarrow{e_{\eta}}^*(z)=1$ where
$z=\theta\sum_{k=1}^{n_{2j}}m_{2j}n_{2j}^{-1}z_k$. Using estimates that result from the basic inequality (Proposition \ref{basiceq}) it is easy to check that the pair $(z,\eta)$ is the desired
$(32C,2j,1)$ in $Y$. Since this is true for every $C>1$ the result follows.
\ep
We will be focused into finding finite sequences of $(C,j_k,\e)$ exact pairs $(z_k,\eta_k)_{k=1}^{n_{2j_0-1}}$ that have additional properties.
This type of sequences are called Dependent sequences (see \cite{AH}).

\begin{definition}
A finite sequence $(z_k)_{k=1}^{n_{2j_0-1}}$ in $\mathcal{Z}$ is called $(C,2j_0-1,\e)$ dependent sequence if there exist $p_1<q_1<p_2<q_2<\ldots<p_{n_{2j_0-1}}<q_{n_{2j_0-1}}$ if there exist $(\eta_k)_{k=1}^{n_{2j_0-1}}$ together with $(\xi_k)_{k=1}^{n_{2j_0-1}}$ such that $\eta_k\in\Gamma_{q_k}\setminus\Gamma_{p_k}$, $\xi_k\in\Delta_{q_k}$ the following are satisfied:
\begin{enumerate}
\item  with $\ran z_k\subset (p_k,q_k-1]$.
\item $(z_1,\eta_1)$ is $(C,4j_1-2,\e)$ exact pair and for each $k>1$ $(z_k,\eta_k)$ is $(C,4j_k,\e)$ exact pair.
\item The element $\gamma=\eta_{n_{2j_0-1}}$ has weight $m_{2j_0-1}$ and analysis \[\{p_i,q_i,\xi_i,\overrightarrow{e_{\eta_i}}^*\}_{i=1}^{n_{2j_0-1}}.\]
\end{enumerate}
Notice that by the definition of the sets $\Delta_n$ and the $(C,j,\e)$ exact pairs we have that $ w(\eta_1)=m_{4j_1-2}>n_{2j_0-1}^2$ and $w(\eta_{i+1})=m_{4j_{i+1}}$ where $j_{i+1}=\sigma(\xi_i)$ for $1\leq i\leq n_{2j_0-1}$.
\end{definition}
In a similar manner as in Lemma 6.4 of \cite{AH} every $(C,2j_0-1,\e)$ dependent sequence is a C-RIS. Applying basic inequality (Proposition \ref{basiceqcor}) we can have estimations on averages of dependent sequences which are helpful in order to prove the HI property in block subspaces of the space $\mathcal{Z}$.

\begin{proposition}\label{depend}
Let $(z_k)_{k=1}^{n_{2j_0-1}}$ be a $(C,2j_0-1,\e)$ dependent sequence. We set $z=\frac{1}{n_{2j_0-1}}\sum_{k=1}^{n_{2j_0-1}}z_k$ and $\tilde{z}=\frac{1}{n_{2j_0-1}}\sum_{k=1}^{n_{2j_0-1}}(-1)^kz_k$. Finally for $J$ subinterval of $[1,n_{2j_0-1}]$ we set $\tilde{z}_J=\sum_{k\in J}(-1)^kz_k$.
\begin{enumerate}
\item If $\e=1$, then $\|z\|\geq\frac{1}{m_{2j_0-1}}$ and $\|\tilde{z}\|\leq \frac{40C}{m_{2j_0-1}^2}$.
\item If $\e=0$ $\|z\|\leq\frac{30C}{m_{2j_0-1}^2}$.
\end{enumerate}
\end{proposition}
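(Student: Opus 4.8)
The plan is to treat the two cases $\e=1$ and $\e=0$ separately, in both cases applying Corollary~\ref{basiceqcor} to the dependent sequence $(z_k)_{k=1}^{n_{2j_0-1}}$ (which, as noted just before the statement, is a $C$-RIS, in fact $2C$-RIS after the usual adjustment). The lower estimate $\|z\|\geq\frac{1}{m_{2j_0-1}}$ in case $\e=1$ is the easy half: by property (3) of the definition of a dependent sequence, the functional $\gamma=\eta_{n_{2j_0-1}}$ has weight $m_{2j_0-1}$ and evaluation analysis $\{p_i,q_i,\xi_i,\overrightarrow{e_{\eta_i}}^*\}_{i=1}^{n_{2j_0-1}}$, so Proposition~\ref{evalanal} gives $\overrightarrow{e_{\gamma}^*}=\sum_i d_{\xi_i}^*+\frac{1}{m_{2j_0-1}}\sum_i \overrightarrow{e_{\eta_i}^*}\circ P_{(p_i,q_i]}$. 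Since $\ran z_k\subset(p_k,q_k-1]$ we have $d_{\xi_i}^*(z_k)=0$ for all $i,k$ (the $d^*$ part is supported at rank $q_i$, above $\ran z_k$), and $\overrightarrow{e_{\eta_i}^*}\circ P_{(p_i,q_i]}(z_k)$ picks out $\overrightarrow{e_{\eta_i}^*}(z_i)=\e=1$ when $k=i$ and vanishes otherwise; hence $\overrightarrow{e_{\gamma}^*}(\sum_k z_k)=\frac{n_{2j_0-1}}{m_{2j_0-1}}$, so $\|z\|\geq\frac{1}{m_{2j_0-1}}$ because $\|\overrightarrow{e_{\gamma}^*}\|\leq 1$.

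For the upper estimates I would invoke the ``moreover'' part of Corollary~\ref{basiceqcor} with $j_0$ replaced by $2j_0-1$, applied to the scalars $\lambda_k=(-1)^k$ (case (1), $\tilde z$) or $\lambda_k=1$ (case (2), $z$). The hypothesis to verify is that $|\overrightarrow{e_{\xi}^*}(\sum_{k\in J}\lambda_k z_k)|\leq C\max_{k\in J}|\lambda_k|=C$ for every interval $J\subset\{1,\dots,n_{2j_0-1}\}$ and every $\xi\in\Gamma$ with $w(\xi)=m_{2j_0-1}$. This is exactly where the dependent-sequence machinery is used: one takes the evaluation analysis of such a $\xi$, say $\{p_i',q_i',\xi_i',b_i'^*\}_{i=1}^{a}$ with $a\leq n_{2j_0-1}$ and $w(\xi_i')=m_{2j_0-1}$, and compares it with the analyses of the $\eta_k$. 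Because each $z_k$ is a $(C,4j_k-2\text{ or }4j_k,\e)$ exact pair with $w(\eta_k)$ an $m_{4j_k}$ (or $m_{4j_1-2}$), property (3) of the exact pair forces $|\overrightarrow{e_{\gamma'}^*}(z_k)|$ to be small ($\leq Cm_i^{-1}$ or $\leq Cm_j^{-1}$) unless $\gamma'$ has the matching weight $w(\eta_k)$; and the injectivity of $\sigma$ together with the ``tree-like'' structure of the $\eta_k$ (each $\eta_{k+1}$ built on $\eta_k$ via $\sigma(\xi_k)$) guarantees that the analysis of $\xi$ can ``agree'' with the chain $(\eta_k)$ for at most one index, contributing at most $\|z\|\leq C$ from that single term, with all remaining terms summing (over $a\leq n_{2j_0-1}$ indices) to something like $C n_{2j_0-1}\cdot m_{4j_1-2}^{-1}\cdot\frac{1}{m_{2j_0-1}}$, which is negligible since $m_{4j_1-2}>n_{2j_0-1}^2$. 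This yields the bound $C$ (up to the absolute constant absorbed in the final $40C$, $30C$).

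Granting this estimate, Corollary~\ref{basiceqcor} gives directly $\|n_{2j_0-1}^{-1}\sum_k \lambda_k z_k\|\leq \frac{10C}{m_{2j_0-1}^2}$; in case (1) with $\lambda_k=(-1)^k$ this is $\|\tilde z\|\leq\frac{10C}{m_{2j_0-1}^2}$, and in case (2) with $\lambda_k=1$ and $\e=0$ this is $\|z\|\leq\frac{10C}{m_{2j_0-1}^2}$. The slack between $10C$ and the stated $40C$, $30C$ accommodates the passage from $C$-RIS to $2C$-RIS and the crude constant losses in verifying the interval hypothesis; for $\tilde z_J$ with $J$ a subinterval one applies the same argument on $J$, noting that $\tilde z_J=\sum_{k\in J}(-1)^k z_k$ restricted to an interval is handled by the same evaluation-analysis comparison since subintervals of $[1,n_{2j_0-1}]$ are exactly what the hypothesis of Corollary~\ref{basiceqcor} quantifies over.

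The main obstacle is the verification of the interval estimate $|\overrightarrow{e_{\xi}^*}(\sum_{k\in J}\lambda_k z_k)|\leq C$ for arbitrary $\xi$ of weight $m_{2j_0-1}$: this requires carefully tracking how the evaluation analysis of $\xi$ can interact with the dependent chain $(\eta_k,\xi_k)$, using the injectivity of $\sigma$ to rule out multiple ``coincidences'' of weights $m_{4\sigma(\eta)}$, exactly as in the standard dependent-sequence arguments of \cite{AH} (Lemma~6.4 and the proof of the HI property there). Everything else is a routine application of the basic inequality and Proposition~\ref{evalanal}.
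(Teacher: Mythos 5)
Your proposal follows essentially the same route as the paper: the lower bound via the evaluation analysis of $\gamma=\eta_{n_{2j_0-1}}$ with $d_{\xi_i}^*(z_k)=0$, and the upper bounds via the ``moreover'' part of Corollary~\ref{basiceqcor} after verifying the interval estimate $|\overrightarrow{e_{\xi}^*}(\tilde z_J)|$ through the tree-like property of odd-weight elements. The only cosmetic difference is that the paper establishes the interval bound as $4C$ rather than $C$ (whence the factor $10\cdot 4C=40C$), a constant slack you already flagged as absorbable.
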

\bp The proof uses the same arguments as in \cite{AH} Prop 6.6, Lemma 8.9, therefore we will present only the basic steps for (i). For the first let $p_k,q_k,\eta_k,\xi_k$ that follow from the definition of a dependent sequence and let also $\gamma$ of $w(\gamma)=m_{2j_0-1}$ with analysis $\{p_k,q_k,\xi_k,\overrightarrow{e_{\eta_k}}^*\}_{k=1}^{n_{2j_0-1}}$. We note that since $\ran z_k\subset (p_k,q_k-1]$ and $\xi_k\in\Delta_{q_k}$ we have that $d_{\xi_k}^*(z_l)=0$ for every $k,l$ and thus $\overrightarrow{e_{\gamma}}^*(z)=\overrightarrow{e_{\eta_k}}^*(\frac{n_{2j_0-1}^{-1}}{m_{2j_0-1}}\sum_kz_k)=\frac{1}{m_{2j_0-1}}$.

For the second part of $(i)$ we estimate $\overrightarrow{e_{\gamma}}^*(\tilde{z})$ for every $\gamma$ with $w(\gamma)=m_{2j_0-1}$. Using a corresponding tree like property of the odd weight elements of $\Gamma$ as in \cite{AH}(Lemma 4.6) we deduce that for every $J$ subinterval of $[1,n_{2j_0-1}]$ and every $\gamma$ of weight $m_{2j_0-1}$ $|\overrightarrow{e_{\gamma}}^*(\tilde{z}_J)|\leq 4C$. As we mentioned above the dependent sequence $(z_k)_k$ is C-RIS and additionally it satisfies the moreover part of Corollary \ref{basiceqcor} (replacing C by 4C). We deduce that $\|\tilde{z}\|\leq \frac{10\cdot4C}{m_{2j_0-1}^2}=\frac{40C}{m_{2j_0-1}^2}$.
\ep

The next result uses the same arguments as Lemma 8.10 in \cite{AH}. We include a small proof for sake of completeness.
\begin{corollary}\label{HI}
Let $(x_n)_{n\in\N}$ be a block sequence in $\mathcal{Z}$. Then, the subspace $Z=\overline{<x_n:\ n\in\N>}$ of $\mathcal{Z}$ is HI (i.e. for every $Y_1$, $Y_2$ closed infinite
dimensional subspaces of $\mathcal{Z}$ $\dist(S_{Y_1},S_{Y_2})=0$, where $S_{Y_i}$ denotes the unit sphere of $Y_i$, $i=1,2$).
\end{corollary}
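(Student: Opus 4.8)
The plan is to verify the standard HI criterion recalled right after the definition of hereditarily indecomposable spaces: given two closed infinite-dimensional subspaces $Y_1,Y_2$ of $Z=\overline{<x_n:\ n\in\N>}$ and $\e>0$, we must produce $y_1\in Y_1$, $y_2\in Y_2$ with $\|y_1+y_2\|>1$ and $\|y_1-y_2\|<\e$. First I would pass, by a standard gliding-hump/small-perturbation argument, to normalised block sequences of $(x_n)_{n\in\N}$ inside $Y_1$ and $Y_2$ respectively; since $(Z_n)_{n\in\N}$ is a Schauder decomposition and $(x_n)$ is itself block, any infinite-dimensional subspace of $Z$ contains, up to $\e$-perturbation, a normalised block sequence, and one may interlace the two block sequences so that their ranges alternate. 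Call the resulting interlaced normalised block sequence $(w_k)_{k\in\N}$, chosen so that the odd-indexed terms lie (essentially) in $Y_1$ and the even-indexed terms in $Y_2$.

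Next I would feed this block sequence into the machinery already set up. Fix $j_0$ large (with $m_{2j_0-1}$ as large as needed so that $1/m_{2j_0-1}<$ whatever threshold and $40C/m_{2j_0-1}^2<\e$, with $C=65$). Using Lemma \ref{exactness}, inside the block subspace spanned by suitable tail segments of $(w_k)$ I can build, for a rapidly increasing choice of indices $j_1<j_2<\cdots$, a sequence of $(65,j_k,1)$ exact pairs $(\tilde z_k,\eta_k)$ where each $\tilde z_k$ is a block vector supported on a batch of consecutive $w$'s; because the $w$'s alternate between (perturbations of) $Y_1$ and $Y_2$, I can arrange that each $\tilde z_k$ is itself an average over a block of $w$'s all coming from the same subspace, and that the parity alternates with $k$. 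The $\sigma$-coding built into the definition of $\Delta_n^1$ (the clause with $w(\xi)=m_{4\sigma(\eta)}$) lets me choose $j_{k+1}=\sigma(\xi_k)$ so that the whole family closes up into a $(65,2j_0-1,1)$ dependent sequence $(\tilde z_k)_{k=1}^{n_{2j_0-1}}$ in the sense of the definition preceding Proposition \ref{depend}.

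Now apply Proposition \ref{depend}(1): setting $z=n_{2j_0-1}^{-1}\sum_k \tilde z_k$ and $\tilde z=n_{2j_0-1}^{-1}\sum_k(-1)^k\tilde z_k$ we get $\|z\|\ge 1/m_{2j_0-1}$ and $\|\tilde z\|\le 40C/m_{2j_0-1}^2$. Rescale: put $y=m_{2j_0-1}\,z/2$ (the factor $1/2$ is harmless and keeps us honest about the perturbations), so $\|y\|\gtrsim 1$, while $m_{2j_0-1}\|\tilde z\|\le 40C/m_{2j_0-1}<\e$. Split $y=y_1+y_2$ and $\tfrac{m_{2j_0-1}}{2}\tilde z=\pm(y_1-y_2)$ according to the parity of $k$, where $y_1$ collects the odd-$k$ pieces (in $Y_1$) and $y_2$ the even-$k$ pieces (in $Y_2$); then $\|y_1+y_2\|$ is, up to the $\e$-perturbations introduced in the first step, at least the norm of a genuine average of a dependent sequence of $\e=1$ type, hence $\ge 1$ after a final harmless renormalisation, whereas $\|y_1-y_2\|=\tfrac{m_{2j_0-1}}{2}\|\tilde z\|<\e$. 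Tracking the perturbations carefully and then normalising $y_1,y_2$ gives the HI inequality for $(Y_1,Y_2)$.

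The main obstacle is the bookkeeping in the middle step: one must simultaneously (a) keep the exact pairs genuinely inside (perturbations of) $Y_1$ and $Y_2$ with alternating parity, (b) respect the special "odd weight" coding $w(\xi)=m_{4\sigma(\eta)}$ and the growth condition $w(\eta_1)=m_{4j_1-2}>n_{2j_0-1}^2$ forced by the dependent-sequence definition, and (c) ensure the $\tilde z_k$'s remain RIS so that Proposition \ref{depend} (which rests on the basic inequality, Corollary \ref{basiceqcor}) actually applies. All three are done exactly as in \cite{AH} (Lemma 8.10 and the surrounding material), so the work here is to check that the presence of the extra summands $X_n$ does not disturb those estimates — which is guaranteed because, by Remark \ref{split}, any block vector splits into a "no-weight" $X$-part (controlled by Lemma \ref{RISX}) and an $\ell_\infty(\Delta)$-part handled exactly as in the scalar Argyros–Haydon construction.

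\begin{proof}
Let $Y_1,Y_2$ be closed infinite-dimensional subspaces of $Z=\overline{<x_n:\ n\in\N>}$ and let $\e>0$; by the criterion recalled above it suffices to find $y_i\in Y_i$ with $\|y_1+y_2\|>1$ and $\|y_1-y_2\|<\e$.

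Since $(Z_n)_{n\in\N}$ is a Schauder decomposition of $\mathcal{Z}$ and $(x_n)_{n\in\N}$ is block with respect to it, a standard small-perturbation argument produces normalised block sequences $(u_k)_{k\in\N}$ in $Y_1$ and $(v_k)_{k\in\N}$ in $Y_2$, each $\frac{\e}{100}$-close to an actual block sequence supported on the $x_n$'s. Interlacing and relabelling, we obtain a single normalised block sequence $(w_k)_{k\in\N}$ in $\mathcal{Z}$ such that each $w_{2k-1}$ is $\frac{\e}{100}$-close to $Y_1$ and each $w_{2k}$ is $\frac{\e}{100}$-close to $Y_2$, and $w_k<w_{k+1}$ for all $k$.

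Fix $j_0\in\N$ large enough that $\frac{40\cdot 65}{m_{2j_0-1}}<\frac{\e}{4}$. Using Lemma \ref{exactness} repeatedly inside block subspaces spanned by successive batches of the $w_k$'s, together with the coding map $\sigma$ of the construction of $(\Delta_n)_{n\in\N}$, we build (exactly as in \cite{AH}, Lemma 8.10) a $(65,2j_0-1,1)$ dependent sequence $(\tilde z_k)_{k=1}^{n_{2j_0-1}}$ with associated data $p_1<q_1<\cdots<p_{n_{2j_0-1}}<q_{n_{2j_0-1}}$, $(\eta_k)$, $(\xi_k)$, in which each $\tilde z_k$ is a block vector whose local support, after the decomposition of Remark \ref{split}, splits as a no-weight $X$-part (to which Lemma \ref{RISX} applies) and an $\ell_\infty(\Delta)$-part handled as in the scalar case; moreover the batch of $w$'s carrying $\tilde z_k$ may be chosen to lie $\frac{\e}{50}$-close to $Y_1$ if $k$ is odd and to $Y_2$ if $k$ is even. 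As noted before Proposition \ref{depend}, $(\tilde z_k)_{k}$ is a $65$-RIS.

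Put $z=n_{2j_0-1}^{-1}\sum_{k=1}^{n_{2j_0-1}}\tilde z_k$ and $\tilde z=n_{2j_0-1}^{-1}\sum_{k=1}^{n_{2j_0-1}}(-1)^k\tilde z_k$. By Proposition \ref{depend}(1),
\[
\|z\|\ \ge\ \frac{1}{m_{2j_0-1}},\qquad \|\tilde z\|\ \le\ \frac{40\cdot 65}{m_{2j_0-1}^{2}}.
\]
Let $S=\{k\le n_{2j_0-1}:\ k\text{ odd}\}$ and set
\[
w=\sum_{k\in S}\tilde z_k,\qquad w'=\sum_{k\notin S}\tilde z_k,
\]
so $n_{2j_0-1}z=w+w'$ and $n_{2j_0-1}\tilde z=-w+w'$. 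Hence $w=\tfrac{n_{2j_0-1}}{2}(z-\tilde z)$ and $w'=\tfrac{n_{2j_0-1}}{2}(z+\tilde z)$. Normalising, let $y=\dfrac{m_{2j_0-1}}{\,n_{2j_0-1}\,}w'$ have the role of $y_1+y_2$ after we define
\[
y_1=\frac{m_{2j_0-1}}{n_{2j_0-1}}\,w,\qquad y_2=\frac{m_{2j_0-1}}{n_{2j_0-1}}\,w'.
\]
By construction $y_1$ is $\frac{\e}{4}$-close to $Y_1$ and $y_2$ is $\frac{\e}{4}$-close to $Y_2$; replacing them by nearby vectors of $Y_1$, $Y_2$ (still denoted $y_1,y_2$) changes norms by at most $\frac{\e}{2}$. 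Now
\[
\|y_1+y_2\|\ \ge\ m_{2j_0-1}\|z\|-\tfrac{\e}{2}\ \ge\ 1-\tfrac{\e}{2},
\]
while
\[
\|y_1-y_2\|\ =\ \Bigl\|\tfrac{m_{2j_0-1}}{n_{2j_0-1}}(w-w')\Bigr\|+\tfrac{\e}{2}\ =\ m_{2j_0-1}\|\tilde z\|+\tfrac{\e}{2}\ \le\ \frac{40\cdot 65}{m_{2j_0-1}}+\tfrac{\e}{2}\ <\ \e.
\]
Rescaling $y_1,y_2$ by a common factor slightly larger than $1$ to make $\|y_1+y_2\|>1$ (which keeps $\|y_1-y_2\|<\e$) completes the verification of the HI criterion for the pair $(Y_1,Y_2)$. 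Since $Y_1,Y_2$ were arbitrary, $Z$ is HI.
\end{proof}
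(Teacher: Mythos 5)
Your proposal is correct and follows essentially the same route as the paper: both build a $(65,2j_0-1,1)$ dependent sequence whose terms alternate between (block perturbations of) $Y_1$ and $Y_2$ via Lemma \ref{exactness} and the $\sigma$-coding, and then apply Proposition \ref{depend} to the sums over odd and even indices to verify the HI criterion. The only differences are cosmetic — you spell out the small-perturbation reduction that the paper dismisses with ``without loss of generality $Y_1,Y_2$ are block subspaces,'' and there are two harmless slips (the stray sentence introducing $y$ as $w'$ alone, and an ``$=$'' that should be ``$\leq$'' in the $\|y_1-y_2\|$ estimate).
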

\bp
Assume that $Z=Y_1\oplus Y_2$, fix $\e>0$, $j_0\in\N$ such that $m_{2j_0-1}\e>2600$ and choose also $j_1\in\N$ such that
$m_{4j_1-2}>n_{2j_0-1}^2$. Without loss of generality we may assume that both $Y_1$, $Y_2$ are block subspaces.
Lemma \ref{exactness} implies that there exists $(65,m_{4j_1-2},1)$ exact pair $(z_1,\eta_1)$ in $Y_1$.
Let $q_1\in\N$ such that $\eta_1\in\Delta_{q_1}$ and for $p_1>\max\{q_1,\max\ran z_1\}$ we define $\xi_1\in\Delta_{p_1}$ as $\xi_1=(p_1,2j_0-1,\overrightarrow{e_{\eta_1}}^*)$.
Let $j_2=\sigma(\xi_1)$ and by Lemma \ref{exactness} we choose $(z_2,\eta_2)$ a $(65,4j_2,1)$ exact pair in $Y_2$ such that $\min\ran z_2>p_1$.
Let $q_2>p_1$ such that $\eta_2\in\Delta_{q_2}$ and for $p_2>\max\{q_2,\max\ran x_2\}$ we define $\xi_2\in\Delta_{p_2}$ as
$\xi_2=(p_2,\xi_1,2j_0-1,\overrightarrow{e_{\eta_2}}^*)$.
Inductively, we construct a $(65,2j_0-1,1)$ dependent sequence $(z_k)_{k=1}^{n_{2j_0-1}}$ such that $z_k\in Y_1$ for $k$ odd while $z_k\in Y_2$ for $k$ even.
Setting $z_1=\sum_{k\ odd}z_k\in Y_1$ and $z_2=\sum_{k even}z_k\in Y_2$ by Proposition \ref{depend} and the choice of $j_0$ we observe that $\|z_1-z_2\|<\e\|z_1+z_2\|$.
\ep

\section{Bounded linear operators on $Z$}
In this section we will study bounded and linear operators on $\mathcal{Z}=(\sum_nX_n)_{AH}$ for a fixed sequence of separable Banach spaces $(X_n,\|\cdot\|_n)_{n\in\N}$. We use an adaptation of basic techniques of \cite{AH} (Section 7) used into proving that their space $\mathfrak{X}_k$ has the "scalar plus compact" property( i.e. for every linear bounded operator $T$ on $\mathfrak{X}_k$ there exists a scalar $\lambda$ such that $T-\lambda I$ is compact).

A "weaker" type of compact operators is presented in the next definition.
\begin{definition}
A bounded and linear operator $K$ on $\mathcal{Z}$ is called horizontally
compact if for every bounded block sequence $(z_k)_{k\in\N}$ in
$Z$, with respect to $(Z_n)_{n\in\N}$, $\|K(z_k)\|\to 0$, or
equivalently, for every $\e>0$, there exists $k_{\e}\in\N$, such
that $\|K|_{\mathcal{Z}_{(k_{\e},\infty)}}\|<\e$, where $\mathcal{Z}_{(k_{\e},\infty)}=\sum_{n=k_{\e}+1}^{\infty}Z_n=P_{(k_{\e},\infty)}[\mathcal{Z}]$.
\end{definition}

In order to use some useful approximation arguments in \cite{AH} we need further notations. For a set $A$ we denote by $span_{\Q}A$ the set of all finite rational linear combinations of elements of $A$. It is known that every separable Banach space admits a bounded M-basis such that the set spanned by its biorthogonals is w* dense and in particular 1-norming for its dual space. For each $n\in\N$ we shall denote by $(x_{n,i})_{i\in\N}$ the M-basis of $X_n$ and by $\{(x_{n,i}^*)_{i\in\N}\}$ the set of its biorthogonals. We shall assume without loss of generality that $F_n=B_{X_n}^*\cap span_{\Q}\{x_{n,i}^*:\ i\in\N\}$ and we set $D_n=span_{\Q}\{x_{n,i}:\ \ i\in\N\}$.
Finally, for each $n\in\N$ we denote by $\ell^{\Q}_{\infty}(\Delta_n)$ the set $span_{\Q}\{e_n:\ n\in\N\}$ where $e_n$ is the usual unit vector.

 In the sequel for sake of simplicity we choose to work with a dense subset of the space $\mathcal{Z}$ rather than the whole space.

\begin{lemma}
For every $z\in\mathcal{Z}$ with $\ran z=(n,l]$ and every $\e>0$ there exists $w\in\mathcal{Z}$ such that $\ran z=\ran w$, $\|z-w\|<\e$ and if $\overrightarrow{v}=\supp_{loc}(w)$ then $\overrightarrow{v}\in \sum_{k=n+1}^l\oplus( D_k\oplus\ell^{\Q}_{\infty}(\Delta_k))$.
\end{lemma}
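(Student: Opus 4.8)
The plan is to write $z$ through its local support and then perturb that support coordinate by coordinate, replacing each coordinate with a nearby vector having rational data; the density of $D_k\oplus\ell^{\Q}_{\infty}(\Delta_k)$ in $X_k\oplus\ell_{\infty}(\Delta_k)$ is what makes this work.

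First I would invoke Lemma~\ref{dual1}: since $\ran z=(n,l]$ means $z\in Y_l$ and $P_{[1,n]}z=0$, there is $\overrightarrow{u}\in\sum_{k=n+1}^{l}\oplus(X_k\oplus\ell_{\infty}(\Delta_k))$ with $z=i_l(\overrightarrow{0}_{1,n},\overrightarrow{u})$, so $\supp_{loc}(z)=(\overrightarrow{0}_{1,n},\overrightarrow{u})$. Then I would record the relevant density facts: $D_k=\mathrm{span}_{\Q}\{x_{k,i}:i\in\N\}$ is norm dense in $X_k$ (the $x_{k,i}$ form an M-basis of $X_k$, so their linear span, hence their rational span, is dense), and $\ell^{\Q}_{\infty}(\Delta_k)$ is dense in $\ell_{\infty}(\Delta_k)$ because $\Delta_k$ is finite; thus $D_k\oplus\ell^{\Q}_{\infty}(\Delta_k)$ is dense in $X_k\oplus\ell_{\infty}(\Delta_k)$ for every $k$. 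Next I would fix $\delta>0$ small enough that $2\delta<\e$ and $8\delta<\min\{\|P_{\{n+1\}}z\|,\|P_{\{l\}}z\|\}$, both of which are strictly positive precisely because $\ran z=(n,l]$. For each $k\in[n+1,l]$ I choose $\overrightarrow{v}(k)\in D_k\oplus\ell^{\Q}_{\infty}(\Delta_k)$ with $\|\overrightarrow{u}(k)-\overrightarrow{v}(k)\|<\delta$, and set $\overrightarrow{v}=(\overrightarrow{v}(n+1),\dots,\overrightarrow{v}(l))\in\sum_{k=n+1}^{l}\oplus(D_k\oplus\ell^{\Q}_{\infty}(\Delta_k))$ and $w=i_l(\overrightarrow{0}_{1,n},\overrightarrow{v})$, which lies in $Y_l\subset\mathcal{Z}$.

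Finally I would check the three conclusions. Linearity of $i_l$ together with $\|i_l\|\le2$ (Proposition~\ref{bound_prince}) gives $\|z-w\|=\|i_l(\overrightarrow{0}_{1,n},\overrightarrow{u}-\overrightarrow{v})\|\le2\|\overrightarrow{u}-\overrightarrow{v}\|_{\infty}\le2\delta<\e$. Since $w\in Y_l$ we have $\ran w\subseteq[1,l]$, and since $P_{[1,n]}w=i_nR_{[1,n]}w=i_n(\overrightarrow{0}_{1,n})=0$ we have $\ran w\subseteq[n+1,\infty)$, so $\ran w\subseteq(n,l]$; on the other hand $\|P_{\{n+1\}}(z-w)\|\le\|P_{\{n+1\}}\|\,\|z-w\|\le8\delta<\|P_{\{n+1\}}z\|$, so $P_{\{n+1\}}w\ne0$, and symmetrically $P_{\{l\}}w\ne0$, whence $\ran w=(n,l]=\ran z$. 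In particular $\max\ran w=l$, so $\supp_{loc}(w)=(\overrightarrow{0}_{1,n},\overrightarrow{v})$, whose nonzero part $\overrightarrow{v}$ is in $\sum_{k=n+1}^{l}\oplus(D_k\oplus\ell^{\Q}_{\infty}(\Delta_k))$ by construction, as required. There is no genuine obstacle here; the only point requiring care is the equality $\ran w=\ran z$, which is the reason $\delta$ must be chosen small not only against $\e$ but also against $\|P_{\{n+1\}}z\|$ and $\|P_{\{l\}}z\|$.
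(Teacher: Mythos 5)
Your proof is correct and follows essentially the same route as the paper: represent $z$ via Lemma \ref{dual1} as $i_l(\overrightarrow{0}_{1,n},\overrightarrow{u})$, approximate each coordinate of $\overrightarrow{u}$ from the dense set $D_k\oplus\ell^{\Q}_{\infty}(\Delta_k)$, and use $\|i_l\|\leq 2$ to control $\|z-w\|$. Your additional care in forcing $\delta$ small against $\|P_{\{n+1\}}z\|$ and $\|P_{\{l\}}z\|$ so that $\ran w$ equals $(n,l]$ exactly (rather than merely being contained in it) is a detail the paper's proof passes over silently, and is a welcome tightening rather than a departure.
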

\bp
Let $\overrightarrow{u}\in\sum_{k=n+1}^l\oplus(X_k\oplus\ell_{\infty}(\Delta_k))$ such that $\supp_{loc}(z)=(\overrightarrow{0}_{1,n},\overrightarrow{u})$. We split $\overrightarrow{u}$ as $\overrightarrow{u}=\overrightarrow{u}'+\overrightarrow{u}''$ such that $\overrightarrow{u}'\in(\sum_{k=n+1}^l\oplus X_k)_{\infty}$ and $\overrightarrow{u}'' \in(\sum_{k=n+1}^l\oplus\ell_{\infty}(\Delta_k))_{\infty}$.  For each $k$ we find $v_k\in span_{\Q}\{x_{k,i}:\ i\in\N\}$, $y_k\in\ell_{\infty}(\Delta_k)$ with rational coordinates such that $\|\overrightarrow{u}'(k)-v_k\|_k\leq \frac{\e}{2}$ and $\|\overrightarrow{u}''(k)-y_k\|_k\leq \frac{\e}{2}$. Let $w=i_n(\overrightarrow{0}_{1,n},\overrightarrow{v})$ where $\overrightarrow{v}(k)=(v_k,y_k)$. We observe that $\|z-w\|\leq 2\|\overrightarrow{u}-\overrightarrow{v}\|_{\infty}<\e$ and combining all the above the proof is complete.
\ep

We set $Y'_n=\{x=i_n(\overrightarrow{u}): \ \ \overrightarrow{u}\in\sum_{k=1}^n\oplus( D_k\oplus\ell^{\Q}_{\infty}(\Delta_k)) \}$. A direct consequence of the above is that the union $\cup_n Y'_n$ is dense in $\mathcal{Z}$.

For the sequel we need the following notation.
\begin{notation}\label{splitf}
For $n<l$ we split $\overrightarrow{f}\in(\sum_{k=n}^l\oplus(X_k^*\oplus\ell_1(\Delta_k)))_1$ as $\overrightarrow{f}=\overrightarrow{f}'+\overrightarrow{f}''$ such that $\overrightarrow{f}'\in(\sum_{k=n}^l\oplus X_k^*)_1$ and $\overrightarrow{f}''\in(\sum_{k=n}^l\oplus\ell_1(\Delta_k))_1$. We follow the same method that we used for the local support of elements of $\mathcal{Z}$ (Remark \ref{split}), i.e. if $\overrightarrow{f}(k)=(x_k^*,b_k^*)\in (X_k^*\oplus\ell_1(\Delta_k))_1$ we set $\overrightarrow{f}'=(x_n^*,\ldots,x_l^*)$ and $\overrightarrow{f}''=(b_n^*,\ldots,b_l^*)$.
\end{notation}

\begin{remark}\label{approx}
Let $\overrightarrow{f}\in(\sum_{k=n}^l\oplus(X_k^*\oplus\ell_1(\Delta_k)))_1$ such that $\overrightarrow{f}'(k)\in F_k$ for every $n\leq k\leq l$, where $\overrightarrow{f}'$ as above. Then there exists $\gamma\in\Gamma$ such that $\overrightarrow{e_{\gamma}}^*(z)=\overrightarrow{f}'(z)$ for every $z\in Y'_l$.
Indeed, for every $n\leq k\leq l$ let $m_k\geq l$ such that $f'(k)\in F_k^{m_k}$ and let $m=\max\{m_k:k=n,\ldots,l\}$. Note that $m>l$, $\overrightarrow{f}'\in K_{m ,n}$ and let $\gamma=(m+1,n,\overrightarrow{f}',0)\in\Delta_{m+1}^0$. Since $m+1>l$ for $z\in Y'_l$ we have that $d_{\gamma}^*(z)=0$ and thus $e_{\gamma}^*(z)=c_{\gamma}^*(z)=\overrightarrow{f}'(z)$.
\end{remark}

\begin{lemma}\label{dualfunctionals}
Let $n<l$ and $z,w\in \mathcal{Z}$ such that $\ran z,\ran w\in (n,l]$ and $\dist(w,\R z)>\delta$. If we assume that $z\in Y'_l$ then there exists $q\geq l$, $\overrightarrow{b}^*\in B_{q,n}$ such that
that $\overrightarrow{b}^*(z)= 0$
$\overrightarrow{b}^*(w)>\frac{\delta}{4}$.
\end{lemma}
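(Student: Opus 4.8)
The plan is to build $\overrightarrow b^*$ out of a norming functional for the distance $\dist(w,\R z)$, converting its $X^{*}$-coordinates into functionals of the form $\overrightarrow{e_\gamma}^*$ via Remark~\ref{approx}, and then renormalising. First dispose of the case $z=0$: then $\dist(w,\R z)=\|w\|>\delta$, and since $\ran w\subseteq(n,l]$, Lemma~\ref{norm} gives $\gamma\in\Gamma$ with $\rank\gamma>n$ and $|\overrightarrow{e_\gamma}^*(w)|>\tfrac12\|w\|>\tfrac\delta2$, so $\sgn(\overrightarrow{e_\gamma}^*(w))\,\overrightarrow{e_\gamma}^*$ lies in $B_{q,n}$ for any $q\ge\max\{l,\rank\gamma\}$ and does the job. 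So assume $z\neq0$; then $w\notin\R z$ and $z,w$ are linearly independent. Put $\delta_0=\dist(w,\R z)>\delta$ and define $g_0$ on $\R z+\R w$ by $g_0(\lambda z+\mu w)=\mu\delta_0$. Since $|g_0(\lambda z+\mu w)|=|\mu|\delta_0=\dist(\mu w,\R z)\le\|\lambda z+\mu w\|$, we have $\|g_0\|=1$; a Hahn--Banach extension yields $g\in\mathcal Z^*$ with $\|g\|=1$, $g(z)=0$ and $g(w)=\delta_0>\delta$.

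Next I would transfer $g$ to $Y_l$. As $z,w\in Y_l$ we have $g(x)=(i_l^*g)(R_{[1,l]}x)$ for $x\in\{z,w\}$, and $\overrightarrow h:=i_l^*g$ satisfies $\|\overrightarrow h\|_1\le\|i_l\|\le 2$ by Proposition~\ref{bound_prince}. Because $P_{[1,n]}z=P_{[1,n]}w=0$, only the coordinates in $[n+1,l]$ of $\overrightarrow h$ are seen by $z$ and $w$; writing $\overrightarrow h':=R_{[n+1,l]}\overrightarrow h=\overrightarrow x^*+\overrightarrow b^*$ with $\overrightarrow x^*\in(\sum_{k=n+1}^l\oplus X_k^*)_1$ and $\overrightarrow b^*\in(\sum_{k=n+1}^l\oplus\ell_1(\Delta_k))_1$, we get $\|\overrightarrow x^*\|_1+\|\overrightarrow b^*\|_1\le 2$, together with $\overrightarrow x^*(z)+\overrightarrow b^*(z)=g(z)=0$ and $\overrightarrow x^*(w)+\overrightarrow b^*(w)=g(w)=\delta_0$. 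Here $\overrightarrow x^*(z),\overrightarrow b^*(z)$ depend only on the local support $\overrightarrow u=\overrightarrow u'+\overrightarrow u''$ of $z$, which is rational (in the $D_k$'s and $\ell^{\Q}_\infty(\Delta_k)$'s) since $z\in Y'_l$.

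Now I would replace $\overrightarrow x^*$ by $\overrightarrow{e_\gamma}^*$-functionals. Split $\overrightarrow x^*=\sum_{r=1}^{R}\overrightarrow x_r^*$ into $R=\lceil\|\overrightarrow x^*\|_1\rceil\le 2$ pieces of $\ell_1$-norm $\le1$, so that every coordinate of each $\overrightarrow x_r^*$ lies in $B_{X_k^*}$; being $1$-norming, the $F_k$ are $w^*$-dense in $B_{X_k^*}$, so (using that $\overrightarrow u'$ and the local support of $w$ involve only finitely many vectors, and scaling through the sphere to keep norms under control) one approximates each $\overrightarrow x_r^*$ coordinatewise by a rational $\overrightarrow{\tilde x_r^*}\in K_{m_r,n}$ with $m_r>l$, and Remark~\ref{approx} realises it as $\overrightarrow{e_{\gamma_r}}^*$ for some $\gamma_r\in\Delta^0_{m_r+1}$; since $\rank\gamma_r>l\ge\max\ran z,\max\ran w$, we have $\overrightarrow{e_{\gamma_r}}^*(z)=\overrightarrow{\tilde x_r^*}(z)$ and $\overrightarrow{e_{\gamma_r}}^*(w)=\overrightarrow{\tilde x_r^*}(w)$. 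Likewise approximate $\overrightarrow b^*$ by a rational $\overrightarrow c^*$ with $\|\overrightarrow c^*\|_1\le\|\overrightarrow b^*\|_1+\e$. Because all the relevant quantities are rational (this is exactly where $z\in Y'_l$ enters, via the biorthogonality of the $M$-bases), one can tune a single free coefficient so that $\overrightarrow{b_0^*}:=\sum_{r\le R}\overrightarrow{e_{\gamma_r}}^*+\overrightarrow c^*$ satisfies $\overrightarrow{b_0^*}(z)=0$ \emph{exactly}, while $\overrightarrow{b_0^*}(w)>\delta_0-\e$ and the sum of absolute values of its coefficients is at most $R+\|\overrightarrow c^*\|_1\le\|\overrightarrow x^*\|_1+1+\|\overrightarrow b^*\|_1+\e\le 3+\e$.

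Finally, choose $\e<\min\{1,\delta_0-\delta\}$ and pick $q\ge l$ large enough that all the (finitely many) coefficients of $\overrightarrow{b_0^*}$, as well as one quarter of each of them, lie in $G_q$. Then $\overrightarrow b^*:=\tfrac14\overrightarrow{b_0^*}$ lies in $B_{q,n}$: its coefficients are in $G_q$, supported on $\Gamma_q\setminus\Gamma_n$ since every $\gamma_r$ and every index of $\overrightarrow c^*$ has rank $>n$, and their absolute values sum to at most $(3+\e)/4<1$. Moreover $\overrightarrow b^*(z)=0$ and $\overrightarrow b^*(w)=\tfrac14\overrightarrow{b_0^*}(w)>\tfrac14(\delta_0-\e)>\tfrac\delta4$, as required. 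The delicate point — and the main obstacle — is the third paragraph: one must at the same time (i) turn the $X^*$-part of the norming functional into genuine $\overrightarrow{e_\gamma}^*$'s of total $\ell_1$-weight $\le 2$, (ii) keep the resulting coefficient sum strictly below $4$, and (iii) force $\overrightarrow b^*(z)=0$ on the nose rather than up to an error; the hypothesis $z\in Y'_l$ is used precisely to make (iii) possible, by reducing it to solving one rational linear equation that can be absorbed into a single coordinate.
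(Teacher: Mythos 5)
Your proposal is correct and follows essentially the same route as the paper: a Hahn--Banach functional norming $\dist(w,\R z)$, split into its $X^*$-part and $\ell_1(\Delta_k)$-part, with the $X^*$-part converted into $\overrightarrow{e_\gamma}^*$-functionals of rank $>l$ via Remark~\ref{approx} and the whole thing placed into some $B_{q,n}$ by rationality and Remark~\ref{norming}(2), the hypothesis $z\in Y'_l$ being used exactly where you say, to keep $\overrightarrow{b}^*(z)=0$ exact after rational approximation. The only difference is bookkeeping: the paper applies Hahn--Banach to the local supports in the finite-dimensional space $\sum_{k=n+1}^l\oplus(X_k\oplus\ell_\infty(\Delta_k))$, paying the factor $2$ in the distance estimate ($\dist(\overrightarrow{v},\R\overrightarrow{u})\ge\delta/2$) so that the functional has norm $\le 1$ and a single $\gamma$ with a final rescaling by $\tfrac12$ suffices, whereas you pay the factor $2$ through $\|i_l^*\|\le 2$ and compensate with two $\gamma$'s and a rescaling by $\tfrac14$ --- both land at the same bound $\delta/4$.
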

\bp
Let $\overrightarrow{u},\overrightarrow{v}\in\sum_{k=n+1}^l\oplus(X_k\oplus\ell_{\infty}(\Delta_k))$ such that $\supp_{loc}(z)=(\overrightarrow{0}_{1,n},\overrightarrow{u})$ and $\supp_{loc}(w)=(\overrightarrow{0}_{1,n},\overrightarrow{v})$. Notice that \[\|\overrightarrow{v}-\lambda\overrightarrow{u}\|_{\infty}=\|(\overrightarrow{0}_{1,n},\overrightarrow{v})-
\lambda(\overrightarrow{0}_{1,n},\overrightarrow{u})\|_{\infty}\geq\frac{1}{2}\|z-\lambda w\|>\frac{\delta}{2}.\]
Hence $\dist(\overrightarrow{v},\R\overrightarrow{u})\geq\frac{\delta}{2}$. By Hahn Banach Theorem there exists $\overrightarrow{f}\in(\sum_{k=n+1}^l\oplus(X_k^*\oplus\ell_1(\Delta_k)))_1$ such that $\overrightarrow{f}(\overrightarrow{u})=0$ and $\overrightarrow{f}(\overrightarrow{v})\geq\frac{\delta}{2}$. Considering Notation \ref{splitf} we split $\overrightarrow{f}=\overrightarrow{f}'+\overrightarrow{f}''$ where $\overrightarrow{f}'\in(\sum_{k=n+1}^l\oplus X_k^*)_1$ and $\overrightarrow{f}''\in(\sum_{k=n+1}^l\oplus\ell_1(\Delta_k))_1$. Since $z\in Y'_l$ we may assume that $\overrightarrow{f}'(k)\in F_k$ and $\overrightarrow{f}''(k)\in\ell_1^{\Q}(\Delta_k)$ for every $n+1\leq k\leq l$. By Remark \ref{approx} there exists $m\geq l$ and $\gamma\in\Delta_{m+1}^0$ such that $\overrightarrow{e_{\gamma}}^*(x)=\overrightarrow{f}'(x)$ for every $x\in\mathcal{Z}$. We set $\overrightarrow{b}^*=(\frac{1}{2}\overrightarrow{f}'',\overrightarrow{0}_{n+1,m},\frac{1}{2}\overrightarrow{e_{\gamma}}^*)$. Then $\overrightarrow{b}^*\in (\sum_{k=n+1}^{m+1}\oplus\ell_1(\Delta_k))_1$, $\|\overrightarrow{b}^*\|_1\leq 1$ and notice that each coordinate $\overrightarrow{b}^*(k)$ is a rational linear combination of $\{e_{\gamma}^*:\gamma\in\Delta_k\}$. Remark \ref{norming}(2) implies that there exists $q\geq m+1$ such that $\overrightarrow{b}^*\in B_{q,n}$. Observe that $\overrightarrow{b}^*(z)=\frac{1}{2}\overrightarrow{f}''(z)+\frac{1}{2}\overrightarrow{e_{\gamma}}^*(z)=\frac{1}{2}\overrightarrow{f}(z)=0$ and in a similar manner $\overrightarrow{b}^*(w)=\frac{1}{2}\overrightarrow{f}(w)\geq\frac{\delta}{4}$.
\ep

For the sequel of this section $\mathcal{Z}=(\sum\oplus X_n)_{BD}$ such that $(X_n,\|\cdot\|_n)_{n\in\N}$ is a sequence of separable Banach spaces with the additional property that either $\ell_1$ does not embed in
$X_n^*$ for every $n\in\N$ or $X_n$ admits the Schur property for every $n\in\N$.

Adapting the basic steps of Lemma 7.2 in \cite{AH} we arrive at the following result.
 \begin{lemma}\label{exact}
Let T be a bounded and linear operator on $\mathcal{Z}$ and $(w_k)_{k\in\N}$ be a $C$-RIS in $\cup_n Y'_n$ such that $\dist(Tw_k,\R w_k)>\delta>0$ for every
$n\in\N$.
Then, for all $j,p\in\N$, there exist $z\in [w_k:\ \ k\in\N]$ and $\eta\in\Delta_q$, $q>p$ such that
$(z,\eta)$ is $(16C,2j,0)$ exact pair, $\|I-P_{(p,q]}Tz\|\leq\delta m_{2j}^{-1}$ and $P_{(p,q]}^*e_{\eta}^*(Tz)>\frac{\delta}{8}$.
\end{lemma}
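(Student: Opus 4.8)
The plan is to build the exact pair $(z,\eta)$ as an appropriately weighted and scaled average of the given RIS $(w_k)_{k\in\N}$, exactly in the spirit of Lemma 7.2 of \cite{AH}, but with the BD external coordinates now carried along by the vectors $\overrightarrow{e_\gamma}^*$ and $\overrightarrow{b}^*$. First I would fix $j,p\in\N$ and pass to a subsequence of $(w_k)$, still $C$-RIS, with $\min\ran w_1>p$; the hypothesis $\dist(Tw_k,\R w_k)>\delta$ is preserved. For each $k$ apply Lemma \ref{dualfunctionals} to the pair $(w_k,Tw_k)$ — using that $w_k\in\cup_nY'_n$ — to obtain $q_k\geq\max\ran(w_k,Tw_k)$ and $\overrightarrow{b}_k^*\in B_{q_k,p_k}$ (with $p_k=\min\ran w_k-1$) such that $\overrightarrow{b}_k^*(w_k)=0$ while $\overrightarrow{b}_k^*(Tw_k)>\delta/4$. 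Choosing these with successive ranges, we next string them together through odd/even weight elements of $\Gamma$: pick $k_1<k_2<\dots<k_{n_{2j}}$ recursively, set $\eta_1=(q_{k_1}+1,m_{2j},\overrightarrow{b}_{k_1}^*)$ and $\eta_{i}=(q_{k_i}+1,\eta_{i-1},m_{2j},\overrightarrow{b}_{k_i}^*)$, thinning so that $|\overrightarrow{e_{\eta_{i-1}}}^*(w_{k_{i+1}})|$ is negligibly small (possible since a RIS is weakly null by Corollary \ref{basiceqcor}). Let $\eta=\eta_{n_{2j}}$, which has weight $m_{2j}$ and evaluation analysis $\{p_{k_i},q_{k_i},\eta_i,\overrightarrow{b}_{k_i}^*\}_{i=1}^{n_{2j}}$.

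Now set $z=\dfrac{m_{2j}}{n_{2j}}\sum_{i=1}^{n_{2j}}w_{k_i}$. The key computation is that $\overrightarrow{e_\eta}^*(z)=\dfrac{m_{2j}}{n_{2j}}\cdot\dfrac{1}{m_{2j}}\sum_i\overrightarrow{b}_{k_i}^*(w_{k_i})+(\text{error terms from }d_{\eta_i}^*\text{ and cross terms})=0$, since $\overrightarrow{b}_{k_i}^*(w_{k_i})=0$ by construction and the $d_{\eta_i}^*(w_{k_l})$ vanish because $\ran w_{k_l}$ sits strictly below $\rank(\eta_i)$; the cross terms $\overrightarrow{e_{\eta_{i-1}}}^*(w_{k_i})$ were arranged to be tiny and can be absorbed (or killed outright by a slightly more careful thinning). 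This gives condition (1) of the $(16C,2j,0)$ exact pair with $\e=0$. Conditions (2) and (3) — the estimates $d_\xi^*(z)\leq 16C/m_{2j}$ and $|\overrightarrow{e_{\gamma'}}^*(z)|\leq 16C m_i^{-1}$ or $16Cm_{2j}^{-1}$ for $\gamma'$ of weight $m_i\neq m_{2j}$ — follow by applying the basic inequality (Proposition \ref{basiceq}, together with Corollary \ref{basiceqcor}) to the RIS $(w_{k_i})$ and the standard $W[(\mathcal A_{3n_j},\frac1{m_j})_j]$ estimates, exactly as in \cite{AH}; the norm bound $\|z\|\leq 16C$ comes the same way from Corollary \ref{basiceqcor} (the factor $m_{2j}/n_{2j}$ against the $n_{2j}$ summands yields the $1/m_{2j}$-type bound, well under $16C$).

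Finally, for the operator statement: applying $P_{(p,q]}$ with $q=\rank(\eta)$ and using $\overrightarrow{b}_{k_i}^*(Tw_{k_i})>\delta/4$, the same telescoping computation as for $\overrightarrow{e_\eta}^*(z)$ but now applied to $Tz$ gives $P_{(p,q]}^*e_\eta^*(Tz)=\overrightarrow{e_\eta}^*(P_{(p,q]}Tz)\geq\dfrac{m_{2j}}{n_{2j}}\cdot\dfrac{1}{m_{2j}}\sum_i\overrightarrow{b}_{k_i}^*(Tw_{k_i})-(\text{errors})>\dfrac{\delta}{4}\cdot\dfrac{1}{1}-(\text{small})>\dfrac\delta8$; here one must also check that $Tw_k$ essentially lives in $(p_k,q_k]$ up to an error controlled by $\|T\|$ times the RIS tail, so that $P_{(p,q]}Tz$ captures almost all of $\overrightarrow{b}_{k_i}^*(Tw_{k_i})$. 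For $\|I-P_{(p,q]}Tz\|\leq\delta m_{2j}^{-1}$ — I read this as the statement that $P_{(p,q]}Tz$ and $z$ (or rather the relevant normalized version) agree up to $\delta/m_{2j}$ in the directions that matter — one combines the exact-pair estimates on $z$ with the fact that $P_{(p,q]}Tz-z$ is small in the $\overrightarrow{b}^*$-pairing. The main obstacle, and the place where care is genuinely needed rather than routine bookkeeping, is controlling the cross terms $\overrightarrow{e_{\eta_{i-1}}}^*(Tw_{k_i})$ in the $Tz$ computation: unlike $w_{k_i}$, the image $Tw_{k_i}$ need not be weakly null along the chosen subsequence a priori, so one invokes the hypothesis on $X_n^*$ / the Schur property via Proposition \ref{blocking} and the RIS-blocking Proposition \ref{RISblocking} to push $\|P_{[1,q_{k_{i-1}}+1]}Tw_{k_i}\|\to 0$ along a further subsequence, thereby making exactly these cross terms negligible; coordinating this thinning simultaneously with the requirement that $(w_{k_i})$ stay a RIS is the delicate step.
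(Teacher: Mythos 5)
Your overall strategy is the same as the paper's (both are adaptations of Lemma 7.2 of \cite{AH}): use Lemma \ref{dualfunctionals} to produce functionals $\overrightarrow{b}_k^*$ annihilating $w_k$ but seeing $Tw_k$, chain them into an element $\eta$ of weight $m_{2j}$ via the odd/even-weight coding, and take $z=\frac{m_{2j}}{n_{2j}}\sum_k w_k$. But your very first application of Lemma \ref{dualfunctionals} fails as stated: that lemma requires \emph{both} vectors to have range inside a common finite window $(n,l]$, and $Tw_k$ is an arbitrary element of $\mathcal{Z}$ --- it need not have finite range at all, let alone one starting after $p_k=\min\ran w_k-1$. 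Writing ``$q_k\geq\max\ran(w_k,Tw_k)$'' presupposes something that is generally false. The paper's first move is precisely to repair this: it applies Proposition \ref{blocking} (this is where the Schur / $\ell_1\not\hookrightarrow X_n^*$ hypothesis enters) to pass to a subsequence with $\ran w_k\subset(r_k,l_k]$ and $\|(I-P_{(r_k,l_k]})Tw_k\|\leq\delta/(80m_{2j})$, notes that $\dist(P_{(r_k,l_k]}Tw_k,\R w_k)$ is still $>7\delta/16$, and only then applies Lemma \ref{dualfunctionals} to the pair $(w_k,\,P_{(r_k,l_k]}Tw_k)$, both of which now have range in $(r_k,l_k]$. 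You do invoke Proposition \ref{blocking}, but only at the very end and for a different purpose (controlling the cross terms $\overrightarrow{e_{\eta_{i-1}}}^*(Tw_{k_i})$); once the truncation is done up front those cross terms are automatically small, because the pieces $d_{\xi_k}^*$ and $\overrightarrow{b}_k^*\circ P_{(r_k,q_k]}$ of the evaluation analysis act on intervals disjoint from $(r_i,l_i]$ for $i\neq k$, so no separate weak-nullity argument for $(Tw_k)_k$ is needed. The missing step is thus not an idea you lack but an ordering you have wrong; as written, the construction of the $\overrightarrow{b}_k^*$ does not go through.

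A second, smaller point: the condition $\|I-P_{(p,q]}Tz\|\leq\delta m_{2j}^{-1}$ is a typographically mangled form of $\|(I-P_{(p,q]})Tz\|\leq\delta m_{2j}^{-1}$, i.e.\ the part of $Tz$ living outside $(p,q]$ is small. It is not a statement comparing $P_{(p,q]}Tz$ with $z$, and your reading (``$P_{(p,q]}Tz$ and $z$ agree up to $\delta/m_{2j}$ in the directions that matter'') is not what is claimed and would not be true in general. With the truncation done first, this condition follows by summing the bounds $\|(I-P_{(r_k,l_k]})Tw_k\|\leq\delta/(80m_{2j})$ over the $n_{2j}$ terms, since $(r_k,l_k]\subset(p,q]$ for every $k$ used in the average.
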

\bp Let $j,p\in\N$. Repeatedly applying Proposition \ref{blocking} we may assume passing to a subsequence that there exists $p<r_1<l_1<\ldots<r_k<l_k<r_{k+1}<\ldots$ such that $\ran w_k\subset(r_k,l_k]$ and $\|(I-P_{(r_k,l_k]})Tw_k\|\leq\frac{\delta}{80m_{2j}}$ for every $k$. It follows that $\dist(P_{(r_k,l_k]}Tw_k,\R
w_k)>\frac{7\delta}{16}$. By Lemma \ref{dualfunctionals} we can find $q_k\geq l_k$ and $\overrightarrow{b}_k^*\in B_{q_k,r_k}$
that $\overrightarrow{b}^*_k(w_k)=0$
$\overrightarrow{b}^*_k(P_{(r_k,l_k]}Tw_k)>\frac{7\delta}{64}$.

Passing to a subsequence if necessary we may additionally require that
$r_k<q_k+1<r_{k+1}<\ldots$ and let $z=\frac{m_{2j}}{n_{2j}}\sum_{k=1}^{n_{2j}}w_k$. Assuming that $2j<r_1$, we can
recursively choose $\xi_k\in\Delta_{q_k+1}$ with $w(\xi_k)=m_{2j}$
and construct an element $\eta\in\Gamma$ with analysis
$\{r_k,q_k,\xi_k,\overrightarrow{b}^*_k,\}_{k=1}^{n_{2j}}$. Similarly as in \cite{AH}(Lemma 7.2) it is proved that the pair $(z,\eta)$ satisfies
the hypothesis. \ep

Repeatedly and carefully applications of the above lemma imply the following result that is an adaptation of Proposition 7.3 in \cite{AH}.
For sake of completeness we give a slight description of the
proof.
\begin{lemma}\label{RIS}
Let $T:\mathcal{Z}\to \mathcal{Z}$ be a linear bounded operator. Then, for every RIS $(w_k)_{k\in\N}$ in $\mathcal{Z}$, $\dist(Tw_k,\R w_k)\to 0$.
\end{lemma}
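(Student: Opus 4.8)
The plan is to argue by contradiction, in the spirit of Proposition~7.3 of \cite{AH}: a uniform failure of $\dist(Tw_k,\R w_k)\to 0$ is used to build an odd-weight dependent configuration on which $T$ remains almost diagonal, and the average $z$ of that configuration is then seen to satisfy $\|Tz\|$ both very small (by Proposition~\ref{depend}) and not so small (by pairing with the associated conditional functional), a contradiction once the odd weight is large. Concretely, suppose the conclusion fails for some RIS; passing to a subsequence we may fix $C,\delta>0$ so that $(w_k)$ is a $C$-RIS with $\dist(Tw_k,\R w_k)>\delta$ for every $k$ (hence $\|w_k\|>\delta/\|T\|$, so $(w_k)$ does not tend to $0$ in norm). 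Since $\|w_k\|\le C$ and $\lambda=0$ is always admissible, the infimum defining $\dist(Tw_k,\R w_k)$ is attained at some $|\lambda|\le 2\|T\|$; so this separation estimate, like conditions (2)--(3) in the definition of a RIS, survives any sufficiently small norm perturbation of $(w_k)$ that preserves ranges. Using the density of $\bigcup_nY_n'$ (the approximation lemma of this section), I would replace $(w_k)$ by a block sequence $(w_k')\subset\bigcup_nY_n'$ with $\ran w_k'=\ran w_k$, a $2C$-RIS satisfying $\dist(Tw_k',\R w_k')>\delta/2$; after relabelling the constants, assume henceforth that $(w_k)\subset\bigcup_nY_n'$.

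Next I would fix $j_0$ so large that $m_{2j_0-1}>c\,C\|T\|\delta^{-1}$ for the absolute constant $c$ produced below and $m_{2j_0-1}>64$, and fix $j_1$ with $m_{4j_1-2}>n_{2j_0-1}^2$. Then I run the inductive scheme from the proof of Corollary~\ref{HI}, but feeding it Lemma~\ref{exact} in place of Lemma~\ref{exactness}: at step $i$, having produced successive $z_1<\dots<z_{i-1}$ and odd-weight nodes $\xi_1,\dots,\xi_{i-1}$ of weight $m_{2j_0-1}$, set $j_i:=\sigma(\xi_{i-1})$ (with the first weight being $m_{4j_1-2}$), apply Lemma~\ref{exact} with base index past everything chosen so far and with weight parameter chosen so that the output weight is $m_{4j_i}$ (resp.\ $m_{4j_1-2}$), obtaining $z_i\in[w_k:k\in\N]$ and $\eta_i$ such that $(z_i,\eta_i)$ is a $(16C,4j_i,0)$ exact pair with, writing $(p_i,q_i]$ for the interval supplied by the lemma, $\ran z_i\subset(p_i,q_i-1]$, $\|Tz_i-P_{(p_i,q_i]}Tz_i\|\le\delta\,m_{4j_i}^{-1}$ and $P_{(p_i,q_i]}^*\overrightarrow{e_{\eta_i}^*}(Tz_i)>\delta/8$; then build $\xi_i$ of weight $m_{2j_0-1}$ from $\xi_{i-1}$ and $\overrightarrow{e_{\eta_i}^*}$ exactly as in Corollary~\ref{HI}. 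After $n_{2j_0-1}$ steps this produces a $(16C,2j_0-1,0)$ dependent sequence $(z_i)_{i=1}^{n_{2j_0-1}}$ with the $(p_i,q_i]$ successive (hence pairwise disjoint) and an associated node $\gamma$ of weight $m_{2j_0-1}$ whose evaluation analysis (Proposition~\ref{evalanal}) is
\[
\overrightarrow{e_{\gamma}^*}=\sum_{i=1}^{n_{2j_0-1}}d_{\xi_i}^*+\frac{1}{m_{2j_0-1}}\sum_{i=1}^{n_{2j_0-1}}\overrightarrow{e_{\eta_i}^*}\circ P_{(p_i,q_i]}.
\]

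Finally I would compare two estimates of $\|Tz\|$ for $z:=n_{2j_0-1}^{-1}\sum_i z_i$. Since $(z_i)$ is a $(16C,2j_0-1,0)$ dependent sequence, Proposition~\ref{depend}(2) gives $\|z\|\le 30\cdot16C\,m_{2j_0-1}^{-2}$, hence $\|Tz\|\le 480\,C\|T\|\,m_{2j_0-1}^{-2}$. For the lower bound put $y_i:=P_{(p_i,q_i]}Tz_i$, so that $\overrightarrow{e_{\eta_i}^*}(y_i)=(P_{(p_i,q_i]}^*\overrightarrow{e_{\eta_i}^*})(Tz_i)>\delta/8$; since the $(p_l,q_l]$ are pairwise disjoint while $\rank(\xi_i)$ (equal to $q_i+1$ in the analysis) lies outside every $(p_l,q_l]$, one has $P_{(p_i,q_i]}\bigl(\sum_l y_l\bigr)=y_i$ and $d_{\xi_i}^*(y_l)=0$ for all $i,l$. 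As $m_{4j_i}\ge m_{4j_1-2}>n_{2j_0-1}^2$ for every $i$, the vector $Tz$ differs from $n_{2j_0-1}^{-1}\sum_l y_l$ in norm by at most $\delta\,n_{2j_0-1}^{-2}$, and evaluating $\overrightarrow{e_{\gamma}^*}\in B_{\mathcal{Z}^*}$ (Notation~\ref{restrict}) on $Tz$ through the analysis above gives
\begin{align*}
|\overrightarrow{e_{\gamma}^*}(Tz)| &\ \ge\ \frac{1}{m_{2j_0-1}}\cdot\frac{1}{n_{2j_0-1}}\sum_{i=1}^{n_{2j_0-1}}\overrightarrow{e_{\eta_i}^*}(y_i)\ -\ O\!\left(\frac{\delta}{n_{2j_0-1}}\right)\\
&\ >\ \frac{\delta}{8\,m_{2j_0-1}}\ -\ O\!\left(\frac{\delta}{n_{2j_0-1}}\right)\ \ge\ \frac{\delta}{16\,m_{2j_0-1}},
\end{align*}
using $n_{2j_0-1}\ge m_{2j_0-1}^2$ and $m_{2j_0-1}>64$. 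Thus $\|Tz\|\ge\delta/(16\,m_{2j_0-1})$, and comparison with the upper bound forces $m_{2j_0-1}\le 7680\,C\|T\|\delta^{-1}$, contradicting the choice of $j_0$ (take $c=7680$); this contradiction proves the lemma.

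The analytic content is already packaged inside Lemma~\ref{exact} (which leans on Proposition~\ref{blocking}, hence on the hypothesis on the $X_n$), so the remaining work is organisational, and the two points to watch are: matching the weights of the exact pairs to what the coding $\sigma$ requires of a dependent sequence --- this is exactly why $j_i=\sigma(\xi_{i-1})$ and why $j_1$ with $m_{4j_1-2}>n_{2j_0-1}^2$ must be chosen first of all --- and using the ``almost-support'' estimate $\|Tz_i-P_{(p_i,q_i]}Tz_i\|\le\delta\,m_{4j_i}^{-1}$ of Lemma~\ref{exact} together with the disjointness of the intervals $(p_i,q_i]$ to see that the cross terms and the $d_{\xi_i}^*$-contributions in $\overrightarrow{e_{\gamma}^*}(Tz)$ are negligible. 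The reduction to $\bigcup_nY_n'$ in the first paragraph (needed because Lemma~\ref{exact} is stated for RIS inside $\bigcup_nY_n'$) is routine once one observes that only scalars $|\lambda|\le 2\|T\|$ are relevant.
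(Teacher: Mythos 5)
Your proposal is correct and follows essentially the same route as the paper's proof: a contradiction argument that iterates Lemma~\ref{exact} (with $j_{i+1}=\sigma(\xi_i)$ and $m_{4j_1-2}>n_{2j_0-1}^2$) to build a $(16C,2j_0-1,0)$ dependent sequence, then plays the lower bound from the evaluation analysis of $\gamma$ against the upper bound from Proposition~\ref{depend}. The only addition is that you spell out the perturbation argument reducing to a RIS in $\bigcup_n Y_n'$, which the paper states without proof; your constants differ slightly but the argument is the same.
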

\begin{proof}
It is enough to prove it for every RIS in $\cup_nY'_n$. Suppose on the contrary that there exists an RIS $(w_k)_{k\in\N}$ in $\cup_nY'_n$ and $\delta>0$ such that $\dist(Tw_k,\R w_k)>\delta$ for every $k\in\N$. Let also $j_0$ that will be determined later and choose $j_1$ such that $m_{4j_1-2}>n_{2j_0-1}^2$. Applying Lemma \ref{exact} for $j=2j_1-1$ and $p_1=1$ we can find $q_1>1$ and a $(16C,4j_1-2,0)$ exact pair $(z_1,\eta_1)$ such that $\eta_1 \in\Delta_{q_1}$, $\|I-P_{(1,q_1]}Tz_1\|\leq\delta m_{4j_1-2}^{-1}$ and $(\overrightarrow{e_{\eta_1}}^*(P_{(p_1,q_1]}Tz_1)>\frac{\delta}{8}$. Let $\xi_1=(q_1+1,m_{2j_0-1},\overrightarrow{e_{\eta_1}}^*)$, $j_2=\sigma(\xi_1)$ and apply again Lemma \ref{exact} for $j=4j_2$ and $p_2=q_1+1$. Inductively we construct
$p_1<q_1<p_2<q_2<\ldots$, a sequence $(z_i,\eta_i)_{i=1}^{l}$ such that each $(z_i,\eta_i)$ is $(16C,4j_i,0)$ exact pair with $\ran z_i\subset (p_i,q_i]$,
$\eta_i \in\Delta_{q_i}$, $\|I-P_{(p_i,q_i]}Tz_i\|\leq\delta m_{4j_i}^{-1}$ and $\overrightarrow{e_{\eta}}^*_i(P_{(p_i,q_i]}Tz_i)>\frac{\delta}{8}$. Observe that $(z_i)_{i=1}^{n_{2j_0-1}}$ is a $(16C,2j_0-1,0)$ dependent sequence and let
$\gamma\in \Gamma$ with analysis
$\{p_i,q_i,\xi_i,\overrightarrow{e_{\eta_i}}^*\}_{i=1}^{n_{2j_0-1}}$. We notice that $d_{\xi_i}^*(z_j)=0$ for every $i,j$ while for every $i$ \[\overrightarrow{e_{\gamma}}^*(Tz_i)\geq \frac{1}{m_{2j_0-1}}(\overrightarrow{e_{\eta}}^*_i(P_{(p_i,q_i]}Tz_i)-\|I-P_{(p_i,q_i]}Tz_i\|)> \frac{\delta}{8m_{2j_0-1}}-\frac{\delta}{n_{2j_0-1}}.\]
Setting $z=\frac{1}{n_{2j_0-1}}\sum_{i=1}^{n_{2j_0-1}}z_i$ we have the
estimation \[\overrightarrow{e_{\gamma}}^*(Tz)=n_{2j_0-1}^{-1}\sum_{i=1}^{n_{2j_0-1}}\overrightarrow{e_{\gamma}}^*(Tz_i)>\frac{\delta}{16m_{2j_0-1}}.\] Proposition \ref{depend} yields that $\|Tz\|\leq \frac{30\cdot 16C\|T\|}{m_{2j_0-1}^2}$. Now $j_0$
 can be suitable chosen in order to conclude that $\overrightarrow{e_{\gamma}}^*(Tz)>\|Tz\|$ yielding a contradiction.
\end{proof}

All the above yield
\begin{proposition}\label{operators}
Let $T$ be a linear and bounded operator on $\mathcal{Z}$. Then, there
exists a scalar $\lambda$ such that the operator $T-\lambda I$ is horizontally
compact.
\end{proposition}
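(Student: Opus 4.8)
The plan is to derive the statement from Lemma \ref{RIS} together with Proposition \ref{RISblocking}. Lemma \ref{RIS} says that along every RIS the operator $T$ looks like a scalar operator up to a norm--null error; so the two tasks are, first, to extract a single scalar $\lambda$ that works for all RIS at once, and second, to upgrade from RIS to arbitrary bounded block sequences, which is exactly what Proposition \ref{RISblocking} does.

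First I would fix one semi-normalized RIS. Applying Lemma \ref{average} to $\mathcal{Z}$ itself and then passing to a RIS subsequence of a sequence of $\ell_1^{n_{j_k}}$-averages, just as in the proof of Lemma \ref{exactness}, produces a RIS $(w_k)_k$ in $\mathcal{Z}$ with $1\le\|w_k\|\le C_0$ for every $k$. Lemma \ref{RIS} gives $\dist(Tw_k,\R w_k)\to 0$, hence reals $\lambda_k$ with $\|Tw_k-\lambda_k w_k\|\to 0$; since $\|w_k\|\ge 1$ we get $|\lambda_k|\le\|\lambda_k w_k\|\le\|Tw_k\|+\|Tw_k-\lambda_k w_k\|\le\|T\|C_0+o(1)$, so $(\lambda_k)$ is bounded and, after passing to a subsequence, $\lambda_k\to\lambda$ for some $\lambda\in\R$.

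Next I would check that this $\lambda$ is independent of the chosen RIS. Let $(v_k)_k$ be any RIS. After a subsequence we may assume either $\|v_k\|\to 0$, in which case $\|(T-\lambda I)v_k\|\to 0$ trivially, or that $(v_k)_k$ is semi-normalized, in which case Lemma \ref{RIS} yields reals $\mu_k$ with $\|Tv_k-\mu_k v_k\|\to 0$ and, along a subsequence, $\mu_k\to\mu$. Choosing further subsequences of $(w_k)$ and of $(v_k)$ whose ranges are successive and whose RIS parameters stay strictly increasing, one interleaves them into a single RIS $(u_k)_k$; Lemma \ref{RIS} then gives reals converging to some $\nu$ with $\|Tu_k-\nu u_k\|\to 0$. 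Reading this off on the $w$-indices and on the $v$-indices, and using that along a semi-normalized RIS the approximating scalars are unique up to a null sequence (if $\|Tw_k-\alpha_k w_k\|\to 0$ and $\|Tw_k-\beta_k w_k\|\to 0$ then $|\alpha_k-\beta_k|\le\|(\alpha_k-\beta_k)w_k\|\to 0$), we obtain $\nu=\lambda$ and $\nu=\mu$, hence $\lambda=\mu$. A routine argument --- every subsequence of a RIS is a RIS, and any non--semi--normalized subsequence can be discarded after scaling --- then promotes this to $\|(T-\lambda I)v_k\|\to 0$ for \emph{every} RIS $(v_k)_k$ in $\mathcal{Z}$, with the fixed $\lambda$.

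Finally, I would apply Proposition \ref{RISblocking} to the bounded operator $T-\lambda I:\mathcal{Z}\to\mathcal{Z}$, which by the previous step kills every RIS in norm; the conclusion is $\|(T-\lambda I)w_k\|\to 0$ for every bounded block sequence $(w_k)_k$ in $\mathcal{Z}$, i.e. $T-\lambda I$ is horizontally compact. I expect the real work to be in the third paragraph: verifying that an interleaving of two semi-normalized RIS can be arranged (by passing to subsequences) to satisfy conditions (1)--(3) in the definition of a RIS, and keeping the bookkeeping straight so that $\lambda$ is genuinely well defined; everything else is a direct application of Lemmas \ref{RIS}, \ref{average}, \ref{exactness} and Proposition \ref{RISblocking}.
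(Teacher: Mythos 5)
Your proposal is correct and follows essentially the same route as the paper: Lemma \ref{RIS} produces the approximating scalars along a (semi-)normalized RIS, the interleaving argument you describe is precisely the ``easy argument as in Argyros--Haydon, Theorem 7.4'' that the paper invokes to show $\lambda$ is well defined and independent of the RIS, and the final step is the same application of Proposition \ref{RISblocking} to $T-\lambda I$. The only difference is that you spell out the existence of a semi-normalized RIS and the interleaving bookkeeping, which the paper leaves to the cited reference.
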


\bp Let $(w_k)_{k\in\N}$ be a normalized RIS in $\mathcal{Z}$. Lemma \ref{RIS} yields that there exist scalars $\lambda_k$ such that $\|Tw_k-\lambda_kw_k\|\to 0$. An easy argument as
in \cite{AH}, Theorem 7.4 implies that the scalars $\lambda_k$
tend to some scalar $\lambda$ which does not depend on the choice of $(w_k)_{k\in\N}$. By Proposition \label{RISblock} we deduce that
$\|(T-\lambda I)z_k\|\to 0$, for every
bounded block sequence $(z_k)_{n\in\N}$ in $\mathcal{Z}$.
\ep

\section{Quasi Prime AH-$\mathcal{L}^{\infty}$ sums of Banach spaces}
We now study AH sums for specific sequence of Banach spaces. In particular, we denote by $\mathcal{Z}_p$ for $1\leq p<\infty$ the AH-$\mathcal{L}_{\infty}$ sum of
the sequence
$(X_n,\|\cdot\|_n)_{n\in\N}$ such that $X_n=\ell_p$ for every $n\in\N$. A direct consequence of the proceeding study is the following:

\begin{corollary}\label{scplcom}
\begin{enumerate}
\item[(i)] $\mathcal{Z}_p$ is non isomorphic to $\ell_p$.
\item[(ii)] For every bounded and linear operator $T$ on $\mathcal{Z}_p$ there
exists a scalar $\lambda$ such that the operator $T-\lambda I$ is horizontally
compact.
\end{enumerate}
\end{corollary}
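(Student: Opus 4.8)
The plan is to obtain both statements as consequences of the general theory of Sections 5--7, after observing that the particular sequence $X_n=\ell_p$ satisfies the standing hypotheses. For (i) I would distinguish $\mathcal{Z}_p$ from $\ell_p$ by an isomorphic invariant, namely the existence of an infinite-dimensional HI subspace. Since $\mathcal{Z}_p=(\sum_n\oplus\ell_p)_{AH}$ carries the Schauder decomposition $(Z_n)_{n\in\N}$ with every $Z_n\neq\{0\}$, one may pick $0\neq x_n\in Z_n$, obtain a block sequence $(x_n)_{n\in\N}$, and apply Corollary \ref{HI}: its closed linear span is an infinite-dimensional HI subspace of $\mathcal{Z}_p$ (equivalently, $\mathcal{Z}_p$ contains a copy of $\mathfrak{X}_K$). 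On the other hand, for $1\le p<\infty$ the space $\ell_p$ has a $1$-unconditional basis, and by the classical selection principle every infinite-dimensional closed subspace of $\ell_p$ contains an isomorphic copy of $\ell_p$, hence an infinite-dimensional subspace with an unconditional basis, which is decomposable; thus $\ell_p$ admits no HI subspace. Since admitting an infinite-dimensional HI subspace is preserved under isomorphism, it follows that $\mathcal{Z}_p\not\simeq\ell_p$.

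For (ii), the statement is exactly Proposition \ref{operators} specialized to $\mathcal{Z}=\mathcal{Z}_p$, so I only need to check that $(X_n)_n=(\ell_p)_n$ satisfies the standing assumption of Section 7: for each $n$, either $\ell_1$ does not embed in $X_n^*$ or $X_n$ has the Schur property. If $1<p<\infty$, then $X_n^*=\ell_q$ with $\frac1p+\frac1q=1$ is reflexive and hence contains no isomorphic copy of $\ell_1$; if $p=1$, then $X_n=\ell_1$ has the Schur property. In either case the assumption holds, so Proposition \ref{operators} applies to $\mathcal{Z}_p$ and yields, for any bounded linear $T$ on $\mathcal{Z}_p$, a scalar $\lambda\in\R$ with $T-\lambda I$ horizontally compact.

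I do not anticipate a genuine obstacle here: part (ii) is a direct specialization of an already established proposition, and the only slightly non-mechanical ingredient of part (i), that $\ell_p$ contains no HI subspace, is a classical fact about spaces with an unconditional basis. The essential content of the corollary is simply the verification that the concrete spaces $\ell_p$ fall within the scope of the general results developed above.
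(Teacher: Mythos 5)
Your proposal is correct and follows essentially the same route as the paper: part (i) is obtained from Corollary \ref{HI} by noting that $\mathcal{Z}_p$ contains an infinite-dimensional HI subspace while $\ell_p$ cannot, and part (ii) is a direct specialization of Proposition \ref{operators}. Your explicit verification that $X_n=\ell_p$ meets the standing hypothesis of Section 7 (reflexivity of $\ell_q$ for $p>1$, the Schur property for $p=1$) is a detail the paper leaves implicit, but it is exactly the right check.
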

\bp  For $(i)$ we observe that by Proposition \ref{HI} the space $\mathcal{Z}_p$ contains an HI subspace and thus cannot be isomorphic to $\ell_p$. The second $(ii)$ is a direct application of Proposition \ref{operators}.
\ep

As it is shown in the next proposition $\ell_p$ is isomorphic to complemented subspaces of $\mathcal{Z}_p$.

\begin{proposition}
For every $k_0\in\N$ the image $P_{[1,k_0]}(\mathcal{Z}_p)$ is isomorphic to $\ell_p$.
\end{proposition}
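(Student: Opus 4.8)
The goal is to show that the finite-stage subspace $Y_{k_0} = P_{[1,k_0]}(\mathcal{Z}_p)$ is isomorphic to $\ell_p$. Recall from Remark \ref{bp} that $Y_{k_0}$ is $C$-isomorphic to $\sum_{k=1}^{k_0}\oplus(X_k\oplus\ell_{\infty}(\Delta_k))$ via the map $i_{k_0}$, where $C\leq 2$ since $\|i_{k_0}\|\leq 2$ and $\|i_{k_0}(\overrightarrow{x})\|_{\infty}\geq\|\overrightarrow{x}\|_{\infty}$. With $X_k=\ell_p$ for every $k$, this finite direct sum is
\[
\Big(\sum_{k=1}^{k_0}\oplus(\ell_p\oplus\ell_{\infty}(\Delta_k))\Big)_{\infty}.
\]
So the whole assertion reduces to an elementary fact about finite $\ell_{\infty}$-sums: a finite $\ell_{\infty}$-sum of finitely many copies of $\ell_p$ and finitely many finite-dimensional spaces $\ell_{\infty}(\Delta_k)$ is isomorphic to $\ell_p$.

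\textbf{Carrying it out.} First I would record that each $\Delta_k$ is finite (property (i) of the description of the BD-sets), so $\ell_{\infty}(\Delta_k)$ is finite-dimensional; write $d=\sum_{k=1}^{k_0}\dim\ell_{\infty}(\Delta_k)$. Hence $\sum_{k=1}^{k_0}\oplus\ell_{\infty}(\Delta_k)$ (with any of the equivalent norms on a finite product) is a finite-dimensional space $E$ of dimension $d$. Next, the finite $\ell_{\infty}$-sum of $k_0$ copies of $\ell_p$ is isometrically $\ell_p(k_0\times\N)$ with the sup-of-$\ell_p$-norms; but on a finite product all the $\ell_q$-sums are equivalent, so $(\sum_{k=1}^{k_0}\oplus\ell_p)_{\infty}$ is isomorphic to $(\sum_{k=1}^{k_0}\oplus\ell_p)_p\cong\ell_p$. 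Therefore
\[
Y_{k_0}\;\simeq\;\Big(\ell_p\oplus E\Big)_{\infty}\;\simeq\;\ell_p\oplus E,
\]
again because the finite $\ell_{\infty}$-sum of two spaces is isomorphic to any other $\ell_q$-sum of the same two spaces. Finally, since $\ell_p$ is infinite-dimensional, $\ell_p\oplus\R\simeq\ell_p$ (split off a one-dimensional coordinate of $\ell_p$ and absorb it), and iterating $d$ times gives $\ell_p\oplus E\simeq\ell_p\oplus\R^d\simeq\ell_p$. Chaining the isomorphisms yields $Y_{k_0}\simeq\ell_p$.

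\textbf{Where the work really is.} There is essentially no obstacle here; the only things to be a little careful about are (a) citing the correct place (Remark \ref{bp}) for the $C$-isomorphism $Y_{k_0}\cong\sum_{k=1}^{k_0}\oplus(\ell_p\oplus\ell_{\infty}(\Delta_k))$, and (b) the routine but genuinely needed observation that absorbing a finite-dimensional summand into $\ell_p$ does not change the isomorphism class — this uses only that $\ell_p$ has an unconditional basis (or just that $\ell_p\cong\ell_p\oplus\ell_p$ and $\ell_p\cong\ell_p\oplus\R$). Everything else is the standard equivalence of $\ell_q$-sums over a finite index set. I would present the argument in three short lines: reduce to the finite direct sum, identify that sum up to isomorphism with $\ell_p\oplus\R^d$, and invoke $\ell_p\oplus\R^d\simeq\ell_p$.
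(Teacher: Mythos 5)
Your proposal is correct and follows essentially the same route as the paper: identify $P_{[1,k_0]}(\mathcal{Z}_p)=Y_{k_0}$ with the finite sum $(\sum_{k=1}^{k_0}\oplus(\ell_p\oplus\ell_{\infty}(\Delta_k)))_{\infty}$ via $i_{k_0}$, and then use the equivalence of the $\ell_{\infty}$- and $\ell_p$-sum norms on a finite product (together with absorbing the finite-dimensional summands $\ell_{\infty}(\Delta_k)$ into $\ell_p$) to conclude this space is isomorphic to $\ell_p$. The paper states this second step in one line by exhibiting the norm equivalence $\|\overrightarrow{u}\|_{\infty}\leq\|\overrightarrow{u}\|_p\leq C_{k_0}\|\overrightarrow{u}\|_{\infty}$; you simply spell out the same elementary facts in more detail.
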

\bp We set $Y_{k_0}=P_{[1,k_0]}(\mathcal{Z}_p)$ and we recall that $Y_{k_0}$ is isomorphic to $U_{k_0}=(\sum_{k=1}^{k_0}\oplus(\ell_p\oplus\ell_{\infty}(\Delta_k)))_{\infty}$. It is easy to see that $U_{k_0}$ is isomorphic to $\ell_p$. More precisely there exists a constant $C_{k_0}\geq 1$ such that $\|\overrightarrow{u}\|_{\infty}\leq \|\overrightarrow{u}\|_p\leq C_{k_0} \|\overrightarrow{u}\|_{\infty}$ for every $\overrightarrow{u}\in U_{k_0}$. We mention that $C_k\to\infty$.
\ep

\begin{remark}\label{elp}
In a similar manner as above we have that $(\sum_{k=m}^{k}\oplus(\ell_p\oplus\ell_{\infty}(\Delta_k)))_{\infty}$ is isomorphic to $\ell_p$ for every $m\leq k$. Hence, Lemma \ref{dual1} we deduce that $P_{[m,k]}(\mathcal{Z}_p)$ is isomorphic to $\ell_p$ for every $m\leq k$.
\end{remark}

In order to show that the spaces $\mathcal{Z}_p$ are strictly quasi prime we need the following lemmas.

\begin{lemma}\label{hisubspace}
Let $1\leq p<\infty$ and suppose that $\mathcal{Z}_p\simeq U\oplus V$. Then, there exists $k_0\in\N$ such that either $P_{[1,k_0]}|_U$ or $P_{[1,k_0]}|_V$
is an isomorphic embedding.
\end{lemma}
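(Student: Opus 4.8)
The plan is to run an operator-theoretic argument based on Corollary \ref{scplcom}(ii). Assume, as we may, that $U$ and $V$ are complementary closed subspaces of $\mathcal{Z}_p$, and let $Q\colon\mathcal{Z}_p\to\mathcal{Z}_p$ be the bounded projection with $\ran Q=U$ and $\ker Q=V$; then $I-Q$ is the bounded projection onto $V$ along $U$. By Corollary \ref{scplcom}(ii) there is a scalar $\lambda$ and a horizontally compact operator $K$ with $Q=\lambda I+K$.

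The first step is to show $\lambda\in\{0,1\}$. Since $\lambda I$ commutes with $K$ and $Q^2=Q$, expanding $(\lambda I+K)^2=\lambda I+K$ gives $(\lambda^2-\lambda)I=(1-2\lambda)K-K^2$. Both $\lambda K$ and $K^2$ are horizontally compact: for any bounded block sequence $(z_k)_{k\in\N}$ in $\mathcal{Z}_p$ one has $\|Kz_k\|\to 0$, hence $\|\lambda Kz_k\|\to 0$ and $\|K^2z_k\|\le\|K\|\,\|Kz_k\|\to 0$. Consequently $(\lambda^2-\lambda)I$ is horizontally compact. But $I$ is not horizontally compact, since each $Z_n\neq\{0\}$, so there is a normalized block sequence with respect to $(Z_n)_{n\in\N}$, which $I$ clearly does not send to $0$. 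Therefore $\lambda^2=\lambda$.

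Suppose first $\lambda=0$, so that $Q=K$ is horizontally compact. Fix $\e$ with $0<\e<1/3$ and choose $k_0\in\N$ with $\|Q|_{\mathcal{Z}_{(k_0,\infty)}}\|<\e$. For $u\in U$ we have $u=Qu=QP_{[1,k_0]}u+QP_{(k_0,\infty)}u$; since $P_{(k_0,\infty)}u\in\mathcal{Z}_{(k_0,\infty)}$ and $\|P_{(k_0,\infty)}\|\le 3$, this gives $\|u-QP_{[1,k_0]}u\|<3\e\|u\|$. Hence $u\mapsto QP_{[1,k_0]}u$ is a perturbation of the identity on $U$ of norm $<1$, so it is bounded below, and therefore $\|P_{[1,k_0]}u\|\ge\|Q\|^{-1}\|QP_{[1,k_0]}u\|\ge\|Q\|^{-1}(1-3\e)\|u\|$ for all $u\in U$; that is, $P_{[1,k_0]}|_U$ is an isomorphic embedding. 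If instead $\lambda=1$, then $I-Q=-K$ is horizontally compact, and the previous paragraph applied with $V$ and $I-Q$ in place of $U$ and $Q$ produces a $k_0$ for which $P_{[1,k_0]}|_V$ is an isomorphic embedding.

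The argument presents no serious obstacle; the two facts that need routine verification are that horizontal compactness is preserved under composition with a bounded operator and that $I_{\mathcal{Z}_p}$ is not horizontally compact, while the load-bearing inputs are Corollary \ref{scplcom}(ii) together with the idempotency of $Q$, which force $\lambda\in\{0,1\}$ and thereby make one of the projections $Q$, $I-Q$ horizontally compact. The only point requiring a little care is the perturbation estimate identifying the "large" summand with a subspace of $Y_{k_0}=P_{[1,k_0]}[\mathcal{Z}_p]$.
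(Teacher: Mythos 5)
Your proof is correct, and it relies on the same key input as the paper, namely the representation $Q=\lambda I+K$ from Corollary \ref{scplcom}(ii); but the way you exploit it is genuinely different from the paper's. The paper never shows $\lambda\in\{0,1\}$: it splits into $\lambda=0$ (where it simply asserts that the conclusion ``trivially holds for $U$'') and $\lambda\neq 0$, and in the latter case it argues by contradiction that $P_{[1,k_0]}|_V$ must embed for some $k_0$ --- if not, a sliding-hump argument produces a normalized sequence $(v_n)$ in $V=\ker Q$ asymptotic to a block sequence, whence $\|Kv_n\|\to 0$, and then $0=Qv_n=\lambda v_n+Kv_n$ forces $|\lambda|\leq\|Kv_n\|\to 0$, a contradiction. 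You instead first pin down $\lambda\in\{0,1\}$ from idempotency, using that horizontally compact operators form a two-sided ideal and that $I$ is not horizontally compact (both easy and correctly verified), and then treat the two cases symmetrically by the quantitative formulation $\|K|_{\mathcal{Z}_{(k_0,\infty)}}\|<\e$ together with a Neumann-series perturbation of the identity on $U$ (resp.\ $V$). Your route has the advantage of being fully explicit --- it supplies exactly the estimate the paper glosses over in its $\lambda=0$ case --- and of avoiding the sliding-hump step; the paper's route avoids computing $\lambda$ and reaches the conclusion for $V$ directly from $V=\ker Q$ whenever $\lambda\neq 0$. Both are sound; the only hypotheses you use beyond the paper's ($Z_n\neq\{0\}$, $\|P_{(k_0,\infty)}\|\leq 3$, and the equivalent ``uniform tail'' form of horizontal compactness) are all available in the text.
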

\bp
Let $P:\mathcal{Z}_p\to\mathcal{Z}_p$ be a projection onto $U$. By Corollary \ref{scplcom} we have that there exists a scalar $\lambda$ such that $P=\lambda I+K$,
where $K$ is a horizontally compact operator on $\mathcal{Z}_p$.
If $\lambda=0$ we have that $U=K[\mathcal{Z}_p]$ and by the definition of the horizontally compact operator the result trivially holds for $U$. Otherwise,
$\lambda\neq 0$ and we claim that in this case $P_{[1,k_0]}|_V$ is an isomorphic embedding. Indeed, if we assume the opposite we can find a normalized
sequence $(v_n)_{n\in\N}$ in $V$ and a block sequence $(x_n)_{n\in\N}$ in $\mathcal{Z}_p$ such that $\|v_n-x_n\|\to 0$. Since $\|K(x_n)\|\to 0$ we have that $\|K(v_n)\|\to 0$.
A simple observation is that $|\lambda|-\|K(v_n)\|\leq\|P(v_n)\|$ which yields that $\lambda=0$ contradiction our initial assumption.
\ep
The arguments of the next lemma are adapted from \cite{AH} (Lemma 3).
\begin{lemma}\label{isomorph}
Let $1\leq p<\infty$ and $Y$ be a subspace of $\mathcal{Z}_p$ for which there exist $k_0\in\N$ such that $P_{[1,k_0]}|_Y$ is an isomorphism.
Then, the following hold:
\begin{enumerate}
\item If $p=1$ and $Y$ is complemented in $\mathcal{Z}_1$ by a
projection $P$, then for every $\e>0$, there exists $k_{\e}\in\N$
such that $\|P(z)\|<\e\|x\|$ for every $z\in
P_{(k_{\e},\infty)}[\mathcal{Z}_1]$.
\item If $p>1$, then for every $\e>0$ there
exists $k_{\e}\in\N$ such that
$\|P_{(k_{\e},\infty)}(y)\|<\e\|y\|$ for every $y\in Y$.
\end{enumerate}
\end{lemma}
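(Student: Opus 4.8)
The plan is to prove both assertions by contradiction, reducing each to a statement about a weakly null sequence whose image under the isomorphism $P_{[1,k_0]}|_Y$ is a bounded, non‑null sequence lying inside $P_{[1,k_0]}[\mathcal{Z}_p]\simeq\ell_p$. For (1), suppose the conclusion fails, so there are $\e>0$ and, for each $k$, a vector $z_k$ with $\|z_k\|=1$, $P_{[1,k]}z_k=0$ and $\|P(z_k)\|\ge\e$. First I would use the density of $\cup_nY_n$ together with Lemma \ref{dual1} to replace each $z_k$ by a vector with finite range contained in $(k,\infty)$, and then pass to a subsequence so that $(z_k)_k$ becomes a normalised block sequence with respect to $(Z_n)_n$. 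Since $\mathcal{Z}_1$ has a shrinking Schauder decomposition (Theorem \ref{1}(1)), $(z_k)_k$ is weakly null, hence $(P_{[1,k_0]}P(z_k))_k$ is weakly null in $P_{[1,k_0]}[\mathcal{Z}_1]$; but this space is isomorphic to $\ell_1$ and so has the Schur property, forcing $\|P_{[1,k_0]}P(z_k)\|\to 0$. On the other hand $P(z_k)\in Y$ and $P_{[1,k_0]}|_Y$ is an isomorphism, so $\|P_{[1,k_0]}P(z_k)\|$ is bounded below by a fixed positive multiple of $\|P(z_k)\|\ge\e$ --- a contradiction.

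For (2) I would again suppose the conclusion fails: there are $\e>0$ and $y_k\in Y$ with $\|y_k\|=1$ and $\|P_{(k,\infty)}y_k\|\ge\e$. Since $P_{[1,k_0]}|_Y$ embeds $Y$ isomorphically into $P_{[1,k_0]}[\mathcal{Z}_p]\simeq\ell_p$ with $p>1$, the space $Y$ is reflexive, so after a subsequence $y_k\to y_\infty$ weakly; as $\|P_{(k,\infty)}y_\infty\|\to 0$ the vectors $v_k=y_k-y_\infty$ are weakly null, bounded above and (using $\|P_{(k,\infty)}\|\le 3$) bounded below, and satisfy $\|P_{(k,\infty)}v_k\|\ge\e/2$ for all large $k$. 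Passing to a further subsequence, $(v_k)_k$ is equivalent to the unit vector basis of $\ell_p$, so it carries an unconditional lower $\ell_p$-estimate; moreover, writing $Y$ as the graph of a bounded operator $T$ over $P_{[1,k_0]}[Y]\subseteq\ell_p$ (so $v_k=(\mathrm{id}+T)P_{[1,k_0]}v_k$) lets me also transport an upper $\ell_p$-estimate onto the relevant tail vectors. I would then localise the escaping tails --- choosing successive intervals $J_k$ with $\|P_{J_k}v_k\|\ge\e/4$ --- and run the dependent‑sequence machinery of Section 7 (Lemma \ref{wnull} to build odd‑weight functionals, Lemma \ref{exactness} and the averaging estimates of Proposition \ref{depend}) exactly as in the proof of Corollary \ref{HI}, but now testing the output against the $\ell_p$-structure of $(v_k)$: the dependent‑sequence estimates force an alternating average of norm $\lesssim m_{2j_0-1}^{-2}$ to coexist with a same‑sign average of norm $\gtrsim m_{2j_0-1}^{-1}$, which is incompatible with the $\ell_p$-estimates once $j_0$ is chosen large. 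This contradiction proves (2).

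The main obstacle --- and the reason the simple Schur argument of (1) does not carry over --- is that for $p>1$ the head $P_{[1,k_0]}v_k$ need not tend to $0$ in norm (no Schur property), so $\overline{\langle v_k:k\in\N\rangle}$ is not a block subspace and the HI-machinery cannot be applied to it directly. The way I would get around this is the local‑support splitting $v_k=v_k'+v_k''$ of Remark \ref{split}, with $v_k'$ having local support of no weight and $v_k''$ having local support on the $\ell_\infty(\Delta_k)$-coordinates; one passes to whichever of these two families still carries the escaping tails (that family being genuinely block‑like), while using $T$ to check that the needed $\ell_p$-estimates persist through this passage. Reconciling that book‑keeping with the growth relations between $(m_j)_j$ and $(n_j)_j$ is where the real work lies; the remaining steps are routine adaptations of the corresponding arguments in \cite{AH}.
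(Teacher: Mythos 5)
Your part (1) is correct and is essentially the paper's own argument: a sliding-hump reduction to a normalized block (hence weakly null) sequence, followed by the Schur property of $\ell_1$ transported to $Y$ through the isomorphism $P_{[1,k_0]}|_Y$. The issue is with part (2). Your opening reduction there is also the right one (the paper uses the differences $z_k=y_{2k}-y_{2k-1}$ rather than $y_k-y_\infty$, but either way one arrives at a seminormalized weakly null sequence in $Y$, equivalent after a subsequence to the unit vector basis of $\ell_p$, with $\|P_{J_k}(\cdot)\|\geq\e/2$ on successive intervals $J_k$). The gap is in what you do next. The exact-pair/dependent-sequence machinery you propose rests on Lemma \ref{exactness}, which produces exact pairs only inside \emph{block} subspaces (via $\ell_1^n$-averages of RIS), and your sequence $(v_k)$ is not block precisely because the heads $P_{[1,k_0]}v_k$ need not vanish in norm. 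The local-support splitting of Remark \ref{split} does not repair this: it decomposes a single vector into its $X_k$-part and its $\ell_\infty(\Delta_k)$-part but leaves the ranges, and hence the failure of blockness, untouched. So the step ``run the dependent-sequence machinery exactly as in Corollary \ref{HI}'' is unjustified as written, and the numerical contradiction you describe (an alternating average of norm $\lesssim m_{2j_0-1}^{-2}$ against a same-sign average of norm $\gtrsim m_{2j_0-1}^{-1}$) is never actually derived.

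The paper sidesteps all of this. Lemma \ref{wnull} was proved precisely for weakly null sequences that are \emph{not} block: it only requires successive intervals $J_k$ with $\|P_{J_k}(z_k)\|\geq\delta$, and it uses weak nullity (rather than disjointness of ranges) to control the cross terms $\overrightarrow{e_{\eta_{k_i}}}^*(z_{k_r})$ in the inductive construction. A single application of it yields an \emph{even}-weight element $\gamma$ (with $w(\gamma)=m_{2j}$ --- not odd, as you wrote) satisfying $|\overrightarrow{e_{\gamma}}^*(\sum_{i=1}^{n_{2j}}z_{k_i})|\gtrsim \e n_{2j}/m_{2j}$, and this already contradicts the upper $\ell_p$-estimate $\|\sum_{i=1}^{n_{2j}}z_{k_i}\|\leq C n_{2j}^{1/p}$ coming from the equivalence with the $\ell_p$-basis, once $j$ is chosen so that $\e/(16m_{2j})>Cn_{2j}^{1/p-1}$. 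No exact pairs, dependent sequences, or alternating averages are needed. As it stands, your part (2) has a genuine gap at exactly the point where block structure would be required.
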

\bp In Case (1) if we assume the opposite, then there exists
$\e>0$ such that for every $k\in\N$, there exists $x_k\in
\mathcal{Z}_{1(k,\infty)}$ such that $\|z_k\|=1$ and
$\|P(z_k)\|\geq \e$. Using a sliding hump argument we may also
assume that the sequence $(z_k)_{k\in\N}$ is block.
This implies that both $(z_k)_{k\in\N}$ and $(P(z_k))_{k\in\N}$
are weakly null. Since $Y$ is isomorphically
embedded into $Z_{1[1,k_0]}$ which is isomorphic to $\ell_1$ and
by the Schur property of $\ell_1$ we deduce that $\|P(z_k)\|\to
0$, which is a contradiction.

In Case (2), contradicting the
assumption again, we have that there exists $\e>0$ such that for
every $k\in\N$, a normalized sequence $(y_k)_{k\in\N}$ in $Y$ and
a sequence of successive intervals $(I_k)_{k\in\N}$ such that
$\|P_{I_k}(y_k)\|\geq \e$. Now, since $Y$ is
isomorphically embedded into $\mathcal{Z}_{p[1,k_0]}\simeq\ell_p$, passing to a subsequence if necessary, we
may assume that the sequence $z_k=y_{2k}-y_{2k-1}$ is w-null and
passing again to a subsequence we have that $(z_k)_k$ is
equivalent to the unit standard vector basis of $\ell_p$. Thus,
there exists a constant $C>0$ such that
$\|\frac{1}{n}\sum_{k=1}^nz_k\|\leq C
\frac{n^{\frac{1}{p}}}{n}$ for every $n\in\N$.

Fix $j\in\N$ such that
$\frac{\e}{16m_{2j}}>C\frac{n_{2j}^{\frac{1}{p}}}{n_{2j}}$. Passing to a subsequence we can have that $\|P_{(\min I_{k+1},\infty)}y_k\|\to 0$ and thus we may assume that $\|P_{I_{2k}}z_k\|\geq \frac{\e}{2}$ for every $k\in\N$. Considering Lemma \ref{wnull} for the chosen $j$, $J_k=I_{2k}$ and $\delta=\e/2$ there exist elements $(z_{k_i})_{i=1}^{n_{2j}}$ and $\gamma\in\Gamma$ of weight $w(\gamma)=m_{2j}$ such that $|e_{\gamma}^*(\sum_{i=1}^{n_{2j}}z_{k_i})|\geq \frac{\e n_{2j}}{16m_{2j}}$. It follows that $\|\frac{1}{n_{2j}}\sum_{i=1}^{n_{2j}}z_{k_i}\|\geq \frac{\e}{16m_{2j}}>C\frac{n_{2j}^{\frac{1}{p}}}{n_{2j}}$ yielding a contradiction.

\ep

\begin{proposition}
The spaces $\mathcal{Z}_p$, $1\leq p<\infty$ are strictly quasi prime.
\end{proposition}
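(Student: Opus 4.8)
The plan is to show that $\ell_p$ is a witnessing subspace. The existence part is short. For any $k_0$ write $\mathcal{Z}_p=Y_{k_0}\oplus W_{k_0}$ with $Y_{k_0}=P_{[1,k_0]}[\mathcal{Z}_p]\simeq\ell_p$ and $W_{k_0}=P_{(k_0,\infty)}[\mathcal{Z}_p]=\sum_{n>k_0}Z_n$. For $m>k_0$ one has $W_{k_0}=P_{(k_0,m]}[\mathcal{Z}_p]\oplus W_m$ with $P_{(k_0,m]}[\mathcal{Z}_p]\simeq\ell_p$ (Remark \ref{elp}), so $W_{k_0}\simeq\ell_p\oplus W_m$; combining this with $\ell_p\oplus\ell_p\simeq\ell_p$ gives the absorption identities $W_{k_0}\oplus\ell_p\simeq W_{k_0}$ and $W_{k_0}\oplus G\simeq W_{k_0}$ for every finite dimensional $G$. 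In particular $\mathcal{Z}_p=Y_{k_0}\oplus W_{k_0}\simeq\ell_p\oplus W_{k_0}\simeq W_{k_0}$, hence $\mathcal{Z}_p\simeq\ell_p\oplus\mathcal{Z}_p$, which is a nontrivial decomposition since $\ell_p\not\simeq\mathcal{Z}_p$ (Corollary \ref{scplcom}(i)).

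For uniqueness, let $\mathcal{Z}_p\simeq U\oplus V$ be a nontrivial decomposition, realised internally with projections $P$ onto $U$ and $Q=I-P$ onto $V$. By Lemma \ref{hisubspace} there is $k_0$ so that, after possibly interchanging $U$ and $V$, $P_{[1,k_0]}|_U$ is an isomorphic embedding; thus $U$ embeds into $Y_{k_0}\simeq\ell_p$. The first main step is to upgrade this to a complemented embedding, and here the two parts of Lemma \ref{isomorph} enter. For $p>1$ it gives, for small $\e$, an index $k_\e$ with $\|P_{(k_\e,\infty)}u\|<\e\|u\|$ for all $u\in U$, so $P_{[1,k_\e]}|_U$ is $\e$-close to the inclusion; for $p=1$ it instead gives $\|P|_{W_{k_\e}}\|<\e$, which yields $\|u-P(P_{[1,k_\e]}u)\|\leq c\e\|u\|$ for $u\in U$ (with $c$ an absolute constant coming from $\|P_{(k_\e,\infty)}\|$). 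In either case a routine perturbation argument shows that $\pi:=(P_{[1,k_\e]}\circ P)|_{Y_{k_\e}}$ maps $Y_{k_\e}$ into $P_{[1,k_\e]}[U]$ and restricts to an automorphism of it, so $U\simeq P_{[1,k_\e]}[U]$ is a complemented subspace of $Y_{k_\e}\simeq\ell_p$; being infinite dimensional, $U\simeq\ell_p$ by Pe\l czy\'nski's description of the complemented subspaces of $\ell_p$.

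It remains to prove $V\simeq\mathcal{Z}_p$. For $k\geq k_\e$ set $T_k:=P_{(k,\infty)}\circ(Q|_{W_k}):W_k\to W_k$. Writing $T_k(w)=w-P_{(k,\infty)}(Pw)$ and using Lemma \ref{isomorph} once more (to bound $\|P_{(k,\infty)}u\|$ for $u\in U$ when $p>1$, respectively $\|Pw\|$ for $w\in W_k$ when $p=1$), one obtains $\|T_k-\mathrm{id}_{W_k}\|\leq c\e$, so $T_k$ is invertible; hence $Q|_{W_k}$ embeds $W_k$ as a complemented subspace of $V$, with complement $V\cap Y_k=\ker(P_{(k,\infty)}|_V)$. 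Now $V\cap Y_k$ is complemented in $\mathcal{Z}_p$ (being complemented in $V$, which is complemented in $\mathcal{Z}_p$) and lies in $Y_k\simeq\ell_p$, so $P_{[1,k]}$ composed with a projection of $\mathcal{Z}_p$ onto it exhibits it as a complemented subspace of $Y_k$; by Pe\l czy\'nski it is isomorphic to $\ell_p$ or finite dimensional. Either way $V\simeq W_k\oplus(V\cap Y_k)\simeq W_k\simeq\mathcal{Z}_p$ by the absorption identities of the first paragraph. Consequently every nontrivial decomposition of $\mathcal{Z}_p$ is isomorphic to $\ell_p\oplus\mathcal{Z}_p$, which is precisely strict quasi-primeness with witnessing subspace $\ell_p$.

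I expect the perturbation arguments to be the delicate part: translating the tail estimates furnished by the two parts of Lemma \ref{isomorph} into the statements ``$P_{[1,k_\e]}[U]$ is complemented in $Y_{k_\e}$'' and ``$\|T_k-\mathrm{id}_{W_k}\|$ is small'', and keeping the cases $p=1$ and $p>1$ straight. Everything else is bookkeeping with the Schauder decomposition $(Z_n)_n$ and repeated use of $\ell_p\oplus\ell_p\simeq\ell_p$.
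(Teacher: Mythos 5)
Your proof is correct and takes essentially the same route as the paper: Lemma \ref{hisubspace} plus Lemma \ref{isomorph} to complement one summand inside a head $Y_{k_\e}\simeq\ell_p$ (whence it is $\ell_p$ by Pe\l czy\'nski), and the factorization of the tail projection $P_{(k,\infty)}\circ(I-P)|_{W_k}$ through the other summand. The only cosmetic difference is the final step: the paper concludes $U\simeq\ell_p\oplus U\simeq V\oplus U\simeq\mathcal{Z}_p$ from the mere fact that $\ell_p$ is complemented in $U$, whereas you identify the complement of $Q[W_k]$ in $V$ as $V\cap Y_k$ and absorb it -- both work.
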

\bp Let $1\leq p<\infty$ and set $\mathcal{Z}=\mathcal{Z}_p$. Suppose that $\mathcal{Z}\simeq V\oplus U$ and let
$P:\mathcal{Z}\to\mathcal{Z}$ such that $Im P=V$. By Lemma \ref{hisubspace} we may assume that $V$ does not contain an HI subspace and hence
Lemma \ref{isomorph} implies that there exists $k_0\in\N$ such that $\|PP_{(k_0,\infty)}|_V\|\leq 1$ and
$\|P_{(k_0,\infty)}P|_{\mathcal{Z}_{(k_0,\infty)}}\|\leq 1$. This yields that the operators $P:P_{[1,k_0]}(V)\to V$ and $P_{[1,k_0]}:V\to P_{[1,k_0]}(V)$
are inventible as well as $S=P_{[1,k_0]}P:P_{[1,k_0]}(V)\to P_{[1,k_0]}(V)$. Let $Q:\mathcal{Z}_{[1,k_0]}\to V$ defined as $Q=S^{-1}\circ P_{[1,k_0]}P|_{\mathcal{Z}_{[1,k_0]}}$. Then $Q$ is a projection onto $P_{[1,k_0]}(V)\simeq V$
and since $\mathcal{Z}_{[1,k_0]}\simeq \ell_p$ we have that $V\simeq\ell_p$. The following factorization
\[\mathcal{Z}_{(k_0,\infty)}\stackrel{I-P}{\to} U\stackrel{P_{(k_0,\infty)}}{\to}\mathcal{Z}_{(k_0,\infty)}\] yields that
$\mathcal{Z}_{(k_0,\infty)}$ is isomorphic with a complemented subspace of $U$. In particular, by Remark \ref{elp} we deduce that $\ell_p$ is isomorphic to a
complemented subspace of $U$. Therefore, $U\simeq\ell_p\oplus Z\simeq \ell_p\oplus\ell_p\oplus Z\simeq \ell_p\oplus U\simeq V\oplus U\simeq \mathcal{Z}_p$.
\ep

\section{Complemented subspaces of $\mathcal{Z}_p^n$}
In this section we shall study the complemented subspaces of a
finite powers $\mathcal{Z}_p^n=\sum_{i=1}^n\oplus
\mathcal{Z}_p(i)$, endowed with the
supremum norm as an external one. It is clear that since
$\mathcal{Z}_p$ is strictly quasi prime $\mathcal{Z}_p^n\simeq
\ell_p\oplus \mathcal{Z}_p^n$. Therefore, we are interested for
the non trivial complemented subspaces of $\mathcal{Z}_p^n$ that
are not isomorphic to $\ell_p$.
\begin{notation}
For the sequel, for $I\subset\N$ and $L\subset\{1,2,\ldots,n\}$ we denote by
$P_I^L:\mathcal{Z}_p^n\to \mathcal{Z}_p^n$, the natural
projections defined as
$P_I^L(\sum_{i=1}^nz_i)=\sum_{i\in L}P_I(z_i)$, for
$\sum_{i=1}^nz_i\in \mathcal{Z}_p^n$. In the case that $L=\{1,2,\ldots,n\}$ we simply write $P_I^n$. For technical reasons, for a subspace $Y$ of $\mathcal{Z}_p^n$ we will write $Y_I$ instead of $P_I^n(Y)$. Moreover, we say that a sequence $(x_k)_{k\in\N}$ in $\mathcal{Z}_p^n$ is block if $\max_{i=1,\ldots,n}\{\ran x_k(i)\}<\min_{i=1,\ldots,n}\{\ran x_{k+1}(i)\}$, where $x_k(i)\in \mathcal{Z}_p(i)$.
\end{notation}

The following lemma is a generalization of Lemma \ref{isomorph}.

\begin{lemma}\label{isomorphic}
Let $1\leq p<\infty$ and $Y$ be a subspace of $\mathcal{Z}_p^n$ for which there exist $k_0\in\N$ such that $P_{[1,k_0]}^n|_Y$ is an isomorphism. Then, the following hold:
\begin{enumerate}
\item If $p=1$ and $Y$ is complemented in $\mathcal{Z}_1^n$ by a
projection $P$, then for every $\e>0$, there exists $k_{\e}\in\N$
such that $\|P(x)\|<\e\|x\|$ for every $x\in
P_{(k_{\e},\infty)}^n[\mathcal{Z}_1^n]$. \item If $p>1$, then for every $\e>0$ there
exists $k_{\e}\in\N$ such that
$\|P_{(k_{\e},\infty)}^n(y)\|<\e\|y\|$ for every $y\in Y$.
\end{enumerate}
\end{lemma}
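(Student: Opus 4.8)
The plan is to mirror the proof of Lemma \ref{isomorph}, inserting one application of the pigeonhole principle so as to reduce the finite direct sum $\mathcal{Z}_p^n$ to a single summand $\mathcal{Z}_p(i_0)$, in which all of the structure theory of Sections 6--7 is available. Two facts are used throughout: each $\mathcal{Z}_p$, and hence $\mathcal{Z}_p^n$, has a shrinking Schauder decomposition (Theorem \ref{1}(1)); and $P_{[1,k_0]}[\mathcal{Z}_p]\simeq\ell_p$, so that $P_{[1,k_0]}^n[\mathcal{Z}_p^n]=(\sum_{i=1}^n\oplus P_{[1,k_0]}[\mathcal{Z}_p])_{\infty}$ is isomorphic to $(\sum_{i=1}^n\oplus\ell_p)_{\infty}$, a space which for $p=1$ has the Schur property (a finite sum of Schur spaces) and for $p>1$ is reflexive and isomorphic to $\ell_p$.

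For (1) I would argue exactly as in Lemma \ref{isomorph}(1). Assuming the conclusion fails, choose $\e>0$ and normalized $z_k\in P_{(k,\infty)}^n[\mathcal{Z}_1^n]$ with $\|P(z_k)\|\ge\e$. A sliding hump argument replaces $(z_k)_k$ by a small perturbation of a block sequence, which is weakly null since the decomposition of $\mathcal{Z}_1^n$ is shrinking; hence $(P z_k)_k$ is a weakly null sequence in $Y$ with $\|P z_k\|$ still bounded below. Since $P_{[1,k_0]}^n|_Y$ is an isomorphism onto a subspace of the Schur space $P_{[1,k_0]}^n[\mathcal{Z}_1^n]$, the sequence $(P_{[1,k_0]}^nP z_k)_k$ is norm null, whence $\|P z_k\|\to0$, a contradiction.

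For (2), with $p>1$, I would again argue by contradiction: choose $\e>0$, a normalized sequence $(y_k)_k$ in $Y$ and successive intervals $(I_k)_k$ with $\min I_k\to\infty$ and $\|P_{I_k}^n(y_k)\|\ge\e$. As $\|P_{I_k}^n(y_k)\|=\max_{1\le i\le n}\|P_{I_k}(y_k(i))\|$, the pigeonhole principle yields a single coordinate $i_0$ and an infinite set of indices, which I relabel as $\N$, on which $\|P_{I_k}(y_k(i_0))\|\ge\e$. Using that $P_{[1,k_0]}^n[\mathcal{Z}_p^n]$ is reflexive and that $P_{[1,k_0]}^n|_Y$ is an isomorphism, I would pass to a subsequence making $(P_{[1,k_0]}^n y_k)_k$ weakly convergent; then $z_k:=y_{2k}-y_{2k-1}\in Y$ has $(P_{[1,k_0]}^n z_k)_k$ weakly null, so $(z_k)_k$ is weakly null in $\mathcal{Z}_p^n$ and $(z_k(i_0))_k$ is weakly null in $\mathcal{Z}_p(i_0)$; moreover, after a further subsequence, $(P_{[1,k_0]}^n z_k)_k$ --- a bounded weakly null sequence in a space isomorphic to $\ell_p$ --- satisfies an upper $\ell_p$-estimate with some constant $C$, and hence so does $(z_k)_k$, giving $\|\frac1m\sum_{k=1}^m z_k\|\le C m^{1/p-1}$ for all $m$. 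Thinning once more, using that $\|P_{(r,\infty)}^n(y_{2k-1})\|\to0$ as $r\to\infty$ for each fixed $k$, I would arrange that $\|P_{I_{2k}}(z_k(i_0))\|\ge\|P_{I_{2k}}(y_{2k}(i_0))\|-\|P_{I_{2k}}^n(y_{2k-1})\|\ge\e/2$ for all large $k$.

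Finally, fix $j$ (possible by the growth of the sequences $(m_j)_j,(n_j)_j$) with $\frac{\e}{16m_{2j}}>C\,n_{2j}^{1/p-1}$, and apply Lemma \ref{wnull} inside $\mathcal{Z}_p(i_0)$ to the weakly null sequence $(z_k(i_0))_k$, the successive intervals $J_k=I_{2k}$ and $\delta=\e/2$, to obtain $(z_{k_i}(i_0))_{i=1}^{n_{2j}}$ and $\gamma\in\Gamma$ with $w(\gamma)=m_{2j}$ such that $|\overrightarrow{e_{\gamma}}^*(\sum_{i=1}^{n_{2j}}z_{k_i}(i_0))|\ge\frac{\e\,n_{2j}}{16m_{2j}}$. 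Since $\|\overrightarrow{e_{\gamma}}^*\|\le1$ (Notation \ref{restrict}), this forces $\|\frac1{n_{2j}}\sum_{i=1}^{n_{2j}}z_{k_i}\|_{\mathcal{Z}_p^n}\ge\|\frac1{n_{2j}}\sum_{i=1}^{n_{2j}}z_{k_i}(i_0)\|_{\mathcal{Z}_p}\ge\frac{\e}{16m_{2j}}>C\,n_{2j}^{1/p-1}$, contradicting the upper $\ell_p$-estimate. The part I expect to require the most care is the bookkeeping in (2): ensuring that after the reduction to the coordinate $i_0$ and the passage to the difference sequence $(z_k)_k$, one simultaneously keeps the lower bound $\|P_{I_{2k}}(z_k(i_0))\|\ge\e/2$, the weak nullity of $(z_k(i_0))_k$ needed to invoke Lemma \ref{wnull}, and an $\ell_p$-upper estimate whose constant $C$ is independent of $j$ --- after which the contradiction is produced by Lemma \ref{wnull} exactly as in the case $n=1$.
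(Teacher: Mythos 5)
Your proposal is correct and follows essentially the same route as the paper: part (1) is handled exactly as in Lemma \ref{isomorph}(1), and for part (2) the paper likewise applies the pigeonhole principle to fix a single coordinate $n_0$ on which $\|P_{I_k}^n(y_k)(n_0)\|\geq\e$ and then repeats the $n=1$ argument (difference sequence, upper $\ell_p$-estimate, Lemma \ref{wnull}). Your write-up merely fills in the details that the paper leaves implicit.
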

\bp Case (1) is proved using the same arguments as in Case (1) of Lemma \ref{isomorph}.
In Case (2), contradicting the
assumption again, we have that there exists $\e>0$ such that for
every $k\in\N$, a normalized sequence $(y_k)_{k\in\N}$ in $Y$ and
a sequence of successive intervals $(I_k)_{k\in\N}$ such that
$\|P_{I_k}^n(y_k)\|\geq \e$. For every $k$, let $n_k\in L$
satisfying $\|P_{I_k}^n(y_k)(n_k)\|\geq \e$. Then, there exists
$M\in[\N]$ such that $n_k=n_0$ for every $k\in M$. Using arguments as in the proof of Lemma \ref{isomorph}(2) we arrive to a contradiction.
\ep

\begin{notation}\label{projection}
Let $T:\mathcal{Z}_p^n\to \mathcal{Z}_p^n$ be a linear and bounded
operator. Then, $T$ is written into the form $T=(T_{i,j})_{1\leq
i,j\leq n}$, where $T_{i,j}:Z_{p(j)}\to \mathcal{Z}_{p(i)}$.
Proposition \ref{scplcom}(1) yields that for every $1\leq i,j\leq n$
there exists a scalar $\lambda_{i,j}$ such that
$T_{i,j}=\lambda_{i,j}I_{i,j}+K_{i,j}$, where
$I_{i,j}:\mathcal{Z}_{p(j)}\to \mathcal{Z}_{p(i)}$ is the natural
identity map and $K_{i,j}:\mathcal{Z}_{p(j)}\to
\mathcal{Z}_{p(i)}$ is a horizontally compact operator. Setting
$\Lambda=(\lambda_{i,j})_{1\leq i,j\leq n}$, $\Lambda
I=(\lambda_{i,j}I_{i,j})_{1\leq i,j\leq n}$ and
$K=(K_{i,j})_{1\leq i,j\leq n}$ we have that $T =\Lambda I+K$.
\end{notation}

\begin{lemma}\label{projectiononR}
Let $m\leq n$ and $T:\mathcal{Z}_p^n\to \mathcal{Z}_p^m$ be a
linear and bounded operator of the form $T =\Lambda I+K$ as above. Then,
\begin{enumerate}
\item If $m=n$ and $T$ is a projection, then $\Lambda=(\lambda_{i,j})_{1\leq i,j\leq n}$ is a projection on $\R^n$.
\item If $n>m$, then $T$ cannot be an isomorphic embedding.
\end{enumerate}
\end{lemma}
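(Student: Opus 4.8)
The plan is to reduce both parts to a single horizontal-compactness dichotomy and then read them off. Write $T=\Lambda I+K$ as supplied by Corollary~\ref{scplcom}(ii) and Notation~\ref{projection}: here $\Lambda=(\lambda_{i,j})$ is a real $m\times n$ matrix, $K=(K_{i,j})_{1\le i\le m,\,1\le j\le n}$ is a matrix of horizontally compact operators between copies of $\mathcal{Z}_p$, and for a scalar $m\times n$ matrix $\mu$ I write $\mu I\colon\mathcal{Z}_p^n\to\mathcal{Z}_p^m$ for the operator $(\mu I\,x)(i)=\sum_{j=1}^n\mu_{i,j}x(j)$. The crux is the claim: \emph{$\mu I$ is horizontally compact if and only if $\mu=0$}. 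The forward implication is vacuous; for the converse, if $\mu_{i_0,j_0}\neq 0$ I pick a normalized block sequence $(w_k)_k$ in $\mathcal{Z}_p$ and set $x_k\in\mathcal{Z}_p^n$ with $x_k(j_0)=w_k$ and $x_k(j)=0$ for $j\neq j_0$; then $(x_k)_k$ is a normalized block sequence in $\mathcal{Z}_p^n$ and $\|(\mu I)x_k\|\ge\|(\mu I\,x_k)(i_0)\|=|\mu_{i_0,j_0}|>0$, so $\mu I$ is not horizontally compact. Two routine observations support the composition algebra I will need: first, $\mu I$ sends bounded block sequences to bounded block sequences, since each coordinate of $\mu I\,x$ has range inside $\bigcup_{j}\ran x(j)$; second, the matrix $K$ is itself horizontally compact, i.e. $\|Kx_k\|\to 0$ for every bounded block sequence $(x_k)_k$ (use that $\min_i\ran x_k(i)\to\infty$ together with the equivalent formulation of horizontal compactness recalled in the definition). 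Hence $(\Lambda I)K$, $K(\Lambda I)$ and $K^2$ are all horizontally compact.

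For part (1), take $m=n$ and $T^2=T$. Expanding,
\[T^2=(\Lambda I+K)^2=\Lambda^2 I+\big((\Lambda I)K+K(\Lambda I)+K^2\big),\]
where $(\Lambda I)(\Lambda I)=\Lambda^2 I$ because composing these operators corresponds to multiplying the matrices. The bracketed term is horizontally compact by the previous paragraph, and $T=\Lambda I+K$, so $T^2=T$ forces $(\Lambda^2-\Lambda)I$ to be horizontally compact; the dichotomy then gives $\Lambda^2=\Lambda$, i.e. $\Lambda$ is a projection on $\R^n$.

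For part (2), suppose $m<n$ and $T$ is an isomorphic embedding. Since $\Lambda\colon\R^n\to\R^m$ has nontrivial kernel, choose $v=(v_1,\dots,v_n)\in\ker\Lambda$ with $\|v\|_\infty=1$, take a normalized block sequence $(w_k)_k$ in $\mathcal{Z}_p$, and set $x_k=(v_1w_k,\dots,v_nw_k)\in\mathcal{Z}_p^n$. Then $(x_k)_k$ is a normalized block sequence, $(\Lambda I)x_k=0$ since $(\Lambda I\,x_k)(i)=(\Lambda v)_i w_k=0$, and $\|Kx_k\|\to 0$; therefore $\|Tx_k\|=\|Kx_k\|\to 0$ while $\|x_k\|=1$, contradicting that $T$ is an isomorphic embedding.

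The only point requiring a little care — the ``main obstacle'', such as it is — is the composition bookkeeping in part (1): $K$ need not respect the block structure, so one cannot kill $K^2$ on block sequences coordinatewise; instead one uses plain boundedness, $\|K^2x_k\|\le\|K\|\,\|Kx_k\|\to 0$. Modulo that, both statements follow directly from the dichotomy above.
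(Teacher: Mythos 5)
Your proof is correct. For part (1) it is essentially the paper's argument repackaged: the paper also expands $P^2=\Lambda^2I+K'$ with $K'=(\Lambda I)K+K(\Lambda I)+K^2$ and then exhibits a witness showing that $(\Lambda^2-\Lambda)I$ cannot be horizontally compact unless $\Lambda^2=\Lambda$; where you run a block sequence supported in one coordinate through your ``$\mu I$ horizontally compact iff $\mu=0$'' dichotomy, the paper picks a single norm-one vector $x\in\mathcal{Z}_{p(k_0,\infty)}$ with $k_0$ chosen so that $\|K(\tilde{x})\|$ and $\|K'(\tilde{x})\|$ are small and a vector $\tilde a$ witnessing $\Lambda^2\tilde a\neq\Lambda\tilde a$ in the sup norm --- the same mechanism, just instantiated at one $\e$ rather than stated as a clean lemma. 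The genuine difference is part (2): the paper simply refers to \cite{AR}, Prop.~3, whereas you give a self-contained argument, taking $v\in\ker\Lambda$ (nontrivial since $m<n$) and the normalized block sequence $x_k=(v_1w_k,\dots,v_nw_k)$, on which $\Lambda I$ vanishes and $K$ is norm-null, killing any lower bound for $T$. This is the standard kernel argument and is almost certainly what the cited proposition does, but having it written out makes the lemma self-contained; your only bookkeeping obligations --- that a matrix of horizontally compact entries is horizontally compact on the finite sum, and that $\Lambda I$ preserves bounded block sequences --- are both addressed and are consistent with how the paper itself uses these facts in Proposition~\ref{complement_Z^n}.
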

\bp For the first assume, on the contrary, that $\Lambda^2\neq
\Lambda$. Then, there exists $\tilde{0}\neq
\tilde{a}=(a_1,\ldots,a_n)\in\mathbb{R}^n$ such that
$\Lambda^2(\tilde{a})\neq\Lambda(\tilde{a})$. We may assume that
$|a_i|\leq 1$ for all $1\leq i\leq n$ and that there exists $i_0$
such that $|a_{i_0}|=1$. We consider the supremum norm on $\mathbb{R}^n$ and let $0<\epsilon=\|\Lambda^2(\tilde{a})-\Lambda(\tilde{a})\|$. Note that $P^2=\Lambda^2I+K'$, where $K'=(\Lambda I)
K+K(\Lambda I)+K^2$. Since $K_{i,j}$ are horizontally compact we can find $k_0\in\N$ and
$x\in \mathcal{Z}_{p(k_0,\infty)}$ such that $\|x\|=1$ and
both
$\|K(\tilde{x})\|<\frac{\epsilon}{4}$,
$\|K'(\tilde{x})\|\leq\frac{\epsilon}{4}$. Let $\tilde{x_i}\in
\mathcal{Z}^n$ defined as $\tilde{x_i}(j)=\begin{cases} 0,\ \text{if}\
i\neq j\\ x,\ \text{if}\ i=j \end{cases}$ and we set
$\tilde{x}=\sum_{i=1}^na_i\tilde{x_i}$. Clearly,
$\tilde{x}$ belongs in $\mathcal{Z}_{p(k_0,\infty)}^n$, $\|\tilde{x}\|=1$ and
\[\|P^2(\tilde{x})-P(\tilde{x})\|\geq
\|\Lambda^2I(\tilde{x})-\Lambda
I(\tilde{x})\|-\|K(\tilde{x})-K'(\tilde{x})\|\geq\epsilon-\frac{\epsilon}{2}=\frac{\epsilon}{2},\]
which contradicts the fact that $P^2=P$.

For the second part we refer the reader to \cite{AR}, Prop.3 \ep

\bpr\label{complement_Z^n} Let W be an infinite dimensional complemented subspace of
$\mathcal{Z}_p^n$. Then, either $W\simeq\ell_p$ or there exists a non-empty set $L\subset\{1,\ldots, n\}$
such that W is isomorphic to $\mathcal{Z}_p^L(=\sum_{i\in L}\oplus \mathcal{Z}_p)$.
\epr
 \bp Let
$P:\mathcal{Z}_p^n\to \mathcal{Z}_p^n$ be a projection onto $W$,
i.e $P[\mathcal{Z}_p^n]=W$. Lemma \ref{projectiononR} (1), yields that
$\Lambda=(\lambda_{i,j})_{i,j}$ is a projection on $\R^n$. Thus, there exists an inventible matrix $A:\R^n\to\R^n$ of
the form $A=(a_{i,j})_{1\leq i,j\leq n}$ such that $A\Lambda
A^{-1}=(\tilde{\lambda}_{i,j})_{1\leq i,j\leq n}$ where
$\tilde{\lambda}_{i,j}=\begin{cases} 0\ \text{or}\ 1,\ \text{if}\ i=j\\ 1,\ \ \ \ \ \ \text{if}\
i\neq j.
\end{cases}$
Considering the inventible operator $\tilde{A}=(a_{i,j}I_{i,j})_{1\leq i,j\leq n}$ we set
$\tilde{P}=\tilde{A}P\tilde{A}^{-1}$. It is easy to see that
$\tilde{P}:\mathcal{Z}_p^n\to \mathcal{Z}_p^n$ is a projection of the form $\tilde{P}=(\tilde{\lambda}_{i,j}I_{i,j})_{i,j}+\tilde{K}$ where $\tilde{K}=\tilde{A}K\tilde{A}^{-1}=(\tilde{K}_{i,j})_{1\leq i,j\leq n}$ such
that $\tilde{K}_{i,j}$ is horizontally compact for every $1\leq
i,j\leq n$ and
$W\simeq\tilde{P}[\mathcal{Z}_p^n]$. Therefore we may assume that $P=\tilde{P}$ (i.e.$\tilde{K}=K$, $\tilde{\Lambda}=\Lambda$ ).

We set $L=\{i:\ \mu_{i,i}\neq 0\}$.We distinguish into the following cases:\\
Case 1: $L=\emptyset$.
This implies that $P=K$. We claim that there exists $k_0\in\N$ such that $P_{[1,k_0]}^n|_W$ is an isomorphism. Indeed if not, then we can find a normalized
sequence $(w_k)_{k\in\N}$ and a block sequence $(x_k)_{k\in\N}$ in $\mathcal{Z}_p^n$ such that $\|x_k-w_k\|\to 0$.
Since $\|K(x_k)\|\to 0$ we have that $\|w_k\|=\|P(w_k)\|=\|K(w_k)\|\to 0$, a contradiction.
Lemma \ref{isomorphic} implies that there exist $\ell_0\in\N$ such that $\|PP_{(l_0,\infty)}^n|_W\|<\frac{1}{2}$. We conclude that $W\simeq W_{[1,l_0]}^n$ and setting $T:P_{[1,l_0]}^n\circ
P|_{W_{[1,l_0]}^n}:W_{[1,l_0]}^n\to W_{[1,l_0]}^n$ we have that
$T$ is an isomorphism and $T^{-1}\circ P_{[1,l_0]}^n\circ
P|_{P_{[1,l_0]}^n\mathcal{Z}_p^n}:\mathcal{Z}_{p[1,l_0]}^n\to
W_{[1,l_0]}^n$ is a projection on $W_{[1,l_0]}^n$. Since
$\mathcal{Z}_{p[1,l_0]}^n\simeq\ell_p$, the result follows.

Case 2: $L\neq\emptyset$. In this case we prove that
$W\simeq\mathcal{Z}_p^L$. In particular, we
shall prove that $W\simeq\mathcal{Z}_p^L\oplus
Y$, where $Y\simeq\ell_p$ and since $\mathcal{Z}_p$ is quasi
prime, i.e $\mathcal{Z}_p\simeq \mathcal{Z}_p\oplus\ell_p$, the
result will follow. We recall that $K=(K_{i,j})_{i,j}$ where each $K_{i,j}$ is horizontally compact and hence we can find $k_0\in\N$ such that $\|K|_{\mathcal{Z}_{p(k_0,\infty)}^n}\|<\frac{1}{4}$. It follows that the operator
\[P_{(k_0,\infty)}^L\circ P|_{\mathcal{Z}_{p(k_0,\infty)}^L}:\mathcal{Z}_{p(k_0,\infty)}^L\to\mathcal{Z}_{p(k_0,\infty)}^L\] is inventible.
Moreover the factorization of the above operator yields that $\mathcal{Z}_p^L\simeq\mathcal{Z}_{p(k_0,\infty)}^L$ is isomorphic to a complemented subspace of $W$ and thus $W\simeq\mathcal{Z}_p^L\oplus
Y$ as promised. Then, it is easy to see that there exists $\ell_0\in\N$ such that
 $P_{[1,l_0]}^n|_Y$ is an isomorphic embedding.
Working similarly as in Case 1 and using Lemma \ref{isomorphic} we obtain that $Y$ is complemented in $\ell_p$ and so isomorphic to $\ell_p$.
\ep

Lemma \ref{projectiononR}(2) and Propositions \ref{complement_Z^n} yield the following.
\begin{corollary}
The spaces $\mathcal{Z}_p^n$, for $1\leq p<\infty$ admit exactly $n+1$, up to
isomorphism, complemented subspace.
\end{corollary}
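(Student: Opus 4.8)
The plan is to derive the count directly from the classification of complemented subspaces in Proposition \ref{complement_Z^n} and the non-embedding statement of Lemma \ref{projectiononR}(2); everything else is elementary bookkeeping. Throughout, a complemented subspace always means an infinite-dimensional closed complemented subspace, as in Proposition \ref{complement_Z^n}.

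First I would list the candidates. By Proposition \ref{complement_Z^n}, every complemented subspace $W$ of $\mathcal{Z}_p^n$ is isomorphic either to $\ell_p$ or to $\mathcal{Z}_p^L=\sum_{i\in L}\oplus\mathcal{Z}_p$ for some nonempty $L\subseteq\{1,\dots,n\}$; since a permutation of coordinates gives $\mathcal{Z}_p^L\simeq\mathcal{Z}_p^{|L|}$, the only possible isomorphism types are
\[
\ell_p,\quad \mathcal{Z}_p,\quad \mathcal{Z}_p^2,\quad \dots,\quad \mathcal{Z}_p^n,
\]
a list of exactly $n+1$ spaces. Each of them is realised: for $1\le k\le n$ the restriction of a vector in $\mathcal{Z}_p^n$ to its first $k$ coordinates is a norm-one projection with range a copy of $\mathcal{Z}_p^k$, while $\ell_p$ is complemented in $\mathcal{Z}_p$ by Remark \ref{elp} and hence in $\mathcal{Z}_p^n$. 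So it remains to prove that these $n+1$ spaces are pairwise non-isomorphic.

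To separate $\ell_p$ from the powers I would argue as in Corollary \ref{scplcom}(i): for $1\le k\le n$ the space $\mathcal{Z}_p^k$ contains a copy of $\mathcal{Z}_p$, hence contains an HI subspace by Corollary \ref{HI}, whereas $\ell_p$ contains no HI subspace (every infinite-dimensional subspace of $\ell_p$ contains a complemented copy of $\ell_p$, which is decomposable); therefore $\mathcal{Z}_p^k\not\simeq\ell_p$. To separate the powers from each other, fix $1\le k<m\le n$ and suppose $\mathcal{Z}_p^m\simeq\mathcal{Z}_p^k$; such an isomorphism is in particular a bounded isomorphic embedding $T:\mathcal{Z}_p^m\to\mathcal{Z}_p^k$. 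Writing $T=(T_{i,j})$ with $T_{i,j}:\mathcal{Z}_p\to\mathcal{Z}_p$ and applying Corollary \ref{scplcom}(ii) to each entry (exactly as in Notation \ref{projection}, only now for a $k\times m$ block matrix), $T$ decomposes as $\Lambda I+K$ with $\Lambda$ a scalar matrix and $K$ a matrix of horizontally compact operators. Lemma \ref{projectiononR}(2), applied with the roles of $n$ and $m$ there played by $m$ and $k$, then asserts that $T$ cannot be an isomorphic embedding since $m>k$ — a contradiction. Hence the $n+1$ spaces are pairwise non-isomorphic, and together with the classification this shows that $\mathcal{Z}_p^n$ has exactly $n+1$ isomorphism classes of complemented subspaces.

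There is essentially no obstacle left, since all the analytic content resides in Proposition \ref{complement_Z^n} and Lemma \ref{projectiononR}(2). The only point requiring a word of care is that the decomposition $T=\Lambda I+K$ — stated in Notation \ref{projection} for square operator matrices on $\mathcal{Z}_p^n$ — carries over verbatim to the rectangular case $\mathcal{Z}_p^m\to\mathcal{Z}_p^k$, so that Lemma \ref{projectiononR}(2) applies; this is immediate by invoking Corollary \ref{scplcom}(ii) for each of the $km$ entries $T_{i,j}$.
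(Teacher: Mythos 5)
Your proof is correct and follows exactly the route the paper intends: the paper's own justification is the one-line remark that Lemma \ref{projectiononR}(2) and Proposition \ref{complement_Z^n} yield the corollary, and you have simply filled in the bookkeeping (listing the $n+1$ candidates, realizing each, separating $\ell_p$ from the powers via the HI subspaces, and separating the powers via the non-embedding lemma applied to rectangular operator matrices). No gaps; this matches the paper's argument.
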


\end{document}